\newtheorem{theorem}{Theorem}[section]
\newtheorem{corollary}{Corollary}[section]
\theoremstyle{plain}
\newtheorem{lemma}{Lemma}[section]
\newtheorem{proposition}{Proposition}[section]
\numberwithin{equation}{section}
\def\<#1>{\mathinner{\langle#1\rangle}}
\newcommand\makebig[2]{%
 \@xp\newcommand\@xp*\csname#1\endcsname{\bBigg@{#2}}%
 \@xp\newcommand\@xp*\csname#1l\endcsname{\@xp\mathopen\csname#1\endcsname}%
 \@xp\newcommand\@xp*\csname#1r\endcsname{\@xp\mathclose\csname#1\endcsname}%
}
\begin{document}
\title{The analysis of vertex feedback stabilisability of a star-shaped network of fluid-conveying pipes}
\author{Xiao Xuan Feng$^{\tt1,\tt3}$}
\author{Gen Qi Xu$^{\tt2}$}
\author{Mahyar Mahinzaeim$^{\tt1,\ast}$}

 \thanks{
\vspace{-1em}\newline\noindent
{\sc MSC2020}: 37L15, 93D23, 37C10, 34B45, 35P10, 47B06
\newline\noindent
{\sc Keywords}: {pipe conveying fluid, pipe network, spectral problem on a metric star graph, spectral analysis, Riesz basis property, vertex feedback stabilisability}
\newline\noindent
$^{\tt1}$ Research Center for Complex Systems, Aalen University, Germany.
 \newline\noindent
 $^{\tt2}$ Department of Mathematics, Tianjin University, China. 
\newline\noindent
$^{\tt3}$ School of Mathematics and Information Science, Hebei Normal University of Science and Technology, China.
 \newline\noindent
 {\sc Emails}:
 {\tt xiaoxuan.feng@hs-aalen.de},~{\tt gqxu@tju.edu.cn},~{\tt m.mahinzaeim@web.de}.
 \newline\noindent
$^{\ast}$ Corresponding author.
}

\begin{abstract}
It is an outstanding problem whether a pipe-flow system on a star-shaped network is stabilisable by a feedback control on the common vertex. In the present paper we deal with this problem. In particular, we study the equation governing the small vibrations of a stretched elastic pipe conveying fluid in a star-shaped network and examine the question of vertex feedback stabilisability of such a system via control moments. Finding an answer to the question is not straightforward, for the system operator associated with the corresponding closed-loop system is unbounded and nonselfadjoint. An approach to the study of the stabilisation problem for the closed-loop system is presented based on the spectral approach previously introduced by the authors for star graphs of stretched elastic beams. When the tension in the pipes is greater than the square of the fluid-flow velocity, we establish a positive result that in fact gives the strong property of uniform exponential stability of the closed-loop system.
\end{abstract}
\maketitle

\pagestyle{myheadings} \thispagestyle{plain} \markboth{\sc X.\ X.\ Feng, G.\ Q.\ Xu, and M.\ Mahinzaeim}{\sc Star-shaped network of fluid-conveying pipes}

\section{Introduction}\label{sec_01}

In the past two decades one very active area of mathematical systems theory has been the investigation of control, or control-related, problems in mechanics and mathematical physics on networks or metric graphs. Much of the research has focused on the controllability and stabilisability of vibrating elastic -- string, beam, and plate -- networks by insertion of control action into the boundary and connectivity conditions, or, for brevity, simply the \textit{vertex conditions}, and there are already adequate texts on these subjects, e.g.\ \cite{MR4501388,DagerZuazua2006,LagneseEtAl1994,Xu2010graph,MR4397494}. The reason for investigating such systems is easy to explain. For example, remarkable results can be obtained in networked elastic systems when viscous, viscoleastic or thermoelastic damping effects are taken into account or, more generally, when control is exercised. In fact,
one may conjecture that even if an elastic system is not exponentially stable in a nonnetwork setting, it may be possible to stabilise it by vertex feedback control when a network setup is considered. For examples
of such results we refer the reader, e.g., to \cite{MR3891267,MR3799048,MR1814271} and to the above-cited texts. A conclusive answer as to whether or not all vibrating elastic networks can be stabilised by such control action can therefore not be expected, of course, and must be considered individually for each problem. This is specially true when it comes to the stabilisability of flow-induced oscillations in the so-called \textit{pipe-flow systems} on networks, i.e.\ systems describing the vibrations caused by the flow of a fluid through networked pipe systems, and is typical of the particular kind of stabilisation problem we are interested in here.

A resurgence of interest in studying stabilisation problems for pipe-flow systems (or systems related to them) can be seen in the recent papers \cite{AissaEtAl2021,Khemmoudj2021,MahinzaeimEtAl2022,MR4673490,MR4645077,MR4504323,MR4356898}. We discussed in \cite{MahinzaeimEtAl2022} the boundary feedback stabilisation problem for a single elastically vibrating fluid-conveying pipe, assumed long and thin enough that the Euler--Bernoulli beam model is valid, represented by the partial differential equation (with all coefficients nondimensionalised)
\begin{equation}\label{eqa0123}
\frac{\partial^4\bm{\mathfrak{w}}}{\partial s^4}\left(s,t\right)-(\gamma-\eta^2)\,\frac{\partial^2\bm{\mathfrak{w}}}{\partial s^2}\left(s,t\right)+2\beta \eta\frac{\partial^2\bm{\mathfrak{w}}}{\partial s\partial t}\left(s,t\right)+\frac{\partial^2\bm{\mathfrak{w}}}{\partial t^2}\left(s,t\right)=0
\end{equation}
which together with boundary conditions
\begin{align}
\bm{\mathfrak{w}}\left(0,t\right)=\frac{\partial^2 \bm{\mathfrak{w}}}{\partial s^2}\left(0,t\right)&=0,\label{eqa0123a}\\
\frac{\partial^2 \bm{\mathfrak{w}}}{\partial s^2}\left(1,t\right)+\kappa\frac{\partial^2 \bm{\mathfrak{w}}}{\partial s \partial t}\left(1,t\right)&=0,\label{eqa0123b}\\
\frac{\partial^3 \bm{\mathfrak{w}}}{\partial s ^3}\left(1,t\right)-(\gamma-\eta^2)\, \frac{\partial \bm{\mathfrak{w}}}{\partial s }\left(1,t\right)&=0\label{eqa0123c}
\end{align}
and given initial conditions $\bm{\mathfrak{w}}\left(s,0\right)$, $(\partial \bm{\mathfrak{w}}/\partial t)\left(s,0\right)$ forms what we called in \cite{MahinzaeimEtAl2022} a \textit{closed-loop system}, which is just the initial/boundary-value problem for \eqref{eqa0123}. Here $\bm{\mathfrak{w}}\left(s,t\right)$ for $0\le s\le 1$ and $t\in\mathbf{R}_+\left(=\mathbb{R}_+\cup\left\{0\right\}\right)$ represents the transverse deflection or displacement of the pipe, subjected to an externally applied axial force proportional to a parameter $\gamma>0$ (assumed, through boundary condition \eqref{eqa0123c}, to maintain its axial direction; more on this shortly) and carrying the stationary flow of an ideal incompressible fluid with mean velocity $\eta\geq 0$, pinned at $s=0$ (boundary conditions \eqref{eqa0123a}), and subjected at $s=1$ to an angular velocity feedback control proportional to $\kappa\geq 0$ (feedback control moments or damping in boundary condition \eqref{eqa0123b}). It was shown in \cite{MahinzaeimEtAl2022} that unique solutions of the closed-loop system exist, depending continuously on the
initial conditions, and their total energy decreases uniformly exponentially fast as $t\rightarrow\infty$ via the control for $\kappa> 0$, regardless of the values in the pair $\left\{\beta,\eta\right\}$, as long as $\gamma> \eta^2$. (The parameter $\beta$ depends only on the pipe and fluid densities, neither of which is zero, and so by definition $0<\beta<1$.) We note that historically the subject of stability in pipe-flow systems has been most extensively studied in both the mathematical and engineering literature for \eqref{eqa0123} with nondissipative (selfadjoint or
skewadjoint) boundary conditions of standard type, i.e.\ with clamped-free ends (cantilever setting) or with both ends fixed (pinned or clamped), and with or without damping terms in \eqref{eqa0123}. The reader is referred to \cite{PAIDOUSSIS2022103664} for a review of the recent engineering literature. Refer also to \cite[Chapter 3]{Paidoussis2014} or \cite[Section 8.5]{CrandallEtAl1968} for a formal derivation of \eqref{eqa0123}. An explanation of the physical and mathematical meaning of the various standard boundary conditions may be found, e.g., in \cite{HAN1999935}.

A variant of the above initial/boundary-value problem with $\kappa=0$ has been investigated extensively in the literature since its appearance in the early 1950's. In Beck's well-known beam problem (see \cite{Beck1952,Bolotin1963,Ziegler1977}), in which the applied axial force is always tangential to (the displacement curve of) the beam, i.e., the ``follower force'' case, one considers the partial differential equation (taking $\eta=0$ in \eqref{eqa0123})
\begin{equation}\label{eqa0123x}
\frac{\partial^4\bm{\mathfrak{w}}}{\partial s^4}\left(s,t\right)-\gamma\frac{\partial^2\bm{\mathfrak{w}}}{\partial s^2}\left(s,t\right)+\frac{\partial^2\bm{\mathfrak{w}}}{\partial t^2}\left(s,t\right)=0
\end{equation}
with the clamped-free boundary conditions
\begin{equation}
\bm{\mathfrak{w}}\left(0,t\right)=\frac{\partial \bm{\mathfrak{w}}}{\partial s}\left(0,t\right)=\frac{\partial^2 \bm{\mathfrak{w}}}{\partial s^2}\left(1,t\right)=\frac{\partial^3 \bm{\mathfrak{w}}}{\partial s ^3}\left(1,t\right)=0.\label{eqa0123bx}
\end{equation}
The initial/boundary-value problem for \eqref{eqa0123x} with boundary conditions given by \eqref{eqa0123bx} represents a purely \textit{circulatory} system which is not energy-conservative in the classification of Ziegler \cite[Section 1.5]{Ziegler1977}; as is well known this means that the total energy is not necessarily constant for $\gamma>0$ (and in fact also for $\gamma<0$). But as soon as we suppose the applied axial force to act always in a fixed direction along the beam axis it turns out (see \cite[p.\ 91]{Bolotin1963} or \cite[p.\ 112]{Ziegler1977}) that the shear force boundary condition $(\partial^3 \bm{\mathfrak{w}}/\partial s^3)\left(1,t\right)=0$ in \eqref{eqa0123bx} then becomes \eqref{eqa0123c} and energy-conservativeness obtains. This is not surprising, of course, and, properly exploited (as in \cite{MahinzaeimEtAl2022}), it is as a matter of fact exactly what we want as the basis for stabilisability via feedback control.

For networked elastic systems, stabilisability and related problems are often studied for very specific sources of damping in the partial differential equations governing network motion and usually under the assumption that the vertex conditions are nondissipative (e.g.\ see \cite{MR3109894,MR3655800,MR4509852,MR3739755,MR4382292,MR4592978} and the references in the texts cited in the first paragraph above). Yet cases where damping terms are included in the vertex conditions (i.e., the vertex conditions are dissipative) make up the majority of the recent literature on stabilisation problems for networked elastic systems; see the already cited works by Ammari, as well as the recent series of papers by Wang et al.\ \cite{MR4336448,10240878} and Xu et al.\ \cite{MR4027612,MR3908982,MR4027612}.
The case that is the most challenging, however, is when the systems are circulatory and possibly gyroscopic, as is the case for networked pipe-flow systems, and damping is present in the vertex conditions. The most famous classic example of a purely circulatory system in a nonnetwork setting is the problem of Beck, as discussed above. In \cite{MahinzaeimEtAl2021} we have effectively studied \textit{vertex feedback stabilisability} (definition in Section \ref{subsec_01}) for a star-network setup of Beck's problem with damped, pinned-elastic ends. In its essential characteristics -- partial differential equations and vertex conditions with only minor modifications -- this system is similar to an interpretation as a star-network setup of the closed-loop system referred to above when there is no flow, $\eta=0$. Thus this paper in a way is a continuation of \cite{MahinzaeimEtAl2021}. The goal here is to show how our approach in \cite{MahinzaeimEtAl2021} can accomplish the same end of studying vertex feedback stabilisability for a star graph of fluid-conveying pipes. The novelty of the paper, in fact, occurs precisely for vertex feedback stabilisability when $\eta>0$ because it is not obvious how our goal can be achieved without suitable modification of the approach taken in \cite{MahinzaeimEtAl2021}, the main challenge being the spectral properties of the unbounded nonselfadjoint system operator. It should be emphasised that a well-developed ``network stabilisability theory'' has not yet formed for such systems in the literature.

\subsection{System description}

Let us now precisely describe the system we study in this paper. Referring to Fig.\ \ref{fig01} we consider an equilateral $3$-edge metric star graph $\mathbf{G}\coloneqq\left\{\mathbf{V},\mathbf{E}\right\}$, where $\mathbf{V}$ and $\mathbf{E}$ are the sets of vertices $\left\{0\right\}\cup\left\{ a_k\right\}_{k=1}^3$ and edges $\left\{e_k\right\}_{k=1}^3$, respectively, such that each edge $e_k$ connecting the (common) inner vertex at the origin $0$ to the outer vertex $a_k$ is of unit length and is identified with the interval $0\le s_k\le 1$. The value $s_k =1$ corresponds to the outer vertices, and $s_k =0$ corresponds to the inner vertex. We deal here with a nontrivial (nonserial) metric graph containing equal fluid-flow away from the junction of the network.

\begin{figure}[!h]
\begin{center}
\includegraphics[width=0.45\textwidth]{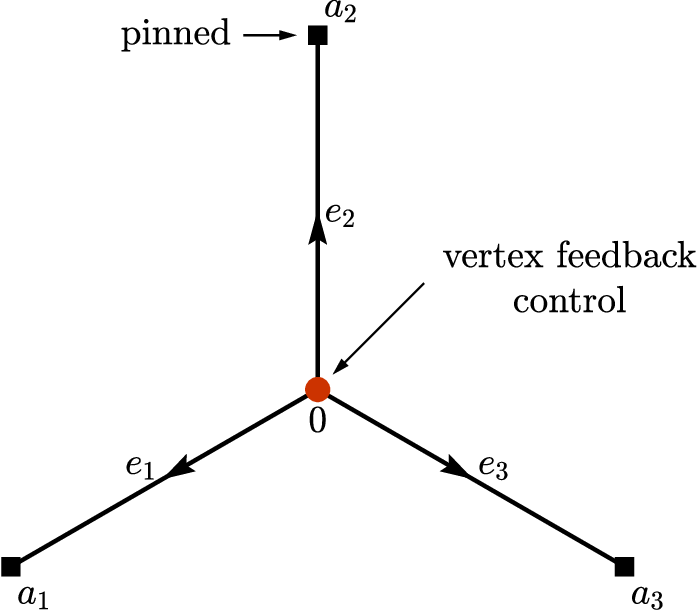}
\end{center}
\caption{A star-shaped network with vertex feedback control.}
\label{fig01}
\end{figure}

Let us henceforth put $s$ in place of the variable $0\le s_k\le 1$, $k=1,2,3$, and let $\bm{\mathfrak{w}}_k\left(s,t\right)$ be the displacement of the $k$-th edge for $0\le s\le 1$ at time $t\in\mathbf{R}_+$ from the equilibrium position of the star graph in the direction orthogonal to it (out-of-plane direction in Fig.\ \ref{fig01}). Consider the partial differential equation
\begin{equation}\label{101}
\frac{\partial^4\bm{\mathfrak{w}}_k}{\partial s ^4}\left(s,t\right)-(\gamma-\eta^2)\,\frac{\partial^2\bm{\mathfrak{w}}_k}{\partial s ^2}\left(s,t\right)+2\beta\eta\frac{\partial^2\bm{\mathfrak{w}}_k}{\partial s \partial t}\left(s,t\right)+\frac{\partial^2\bm{\mathfrak{w}}_k}{\partial t^2}\left(s,t\right) =0,\quad k=1,2,3.
\end{equation}
As initial conditions we require that
\begin{equation}\label{106}
\bm{\mathfrak{w}}_k\left(s, 0\right) = g_k\left (s\right) ,\quad \frac{\partial \bm{\mathfrak{w}}_k}{\partial t}\left(s,0\right)=h_k \left(s\right),\quad k=1,2,3,
\end{equation}
where $g_k$, $h_k$ are known functions having the usual smoothness properties (see Theorem \ref{T-2-1}). For vertex conditions we proceed similarly to \cite{MahinzaeimEtAl2021}. Thus we first assume pinned boundary conditions at the outer vertices $a_k$,
\begin{equation}\label{102}
\bm{\mathfrak{w}}_k\left(1,t\right) =\frac{\partial^2 \bm{\mathfrak{w}}_k}{\partial s^2}\left(1,t\right)=0,\quad k=1,2,3.
\end{equation}
At the inner vertex, the displacement functions $\bm{\mathfrak{w}}_k$ are compatible, so that the continuity condition
\begin{equation}\label{103}
\bm{\mathfrak{w}}_j\left(0,t\right)=\bm{\mathfrak{w}}_k\left(0,t\right),\quad j,k=1,2,3,
\end{equation}
holds. This is the minimal ``geometric'' requirement that has to be satisfied at the inner vertex, regardless of the vertex conditions in moment and shear force, to guarantee that the network does not fall apart. Finally, we list the remaining vertex conditions representing the moments and shear forces. Here we assume that they have to do with vertex feedback control moments applied to a network of beams, i.e.\
\begin{equation}
\frac{\partial^2 \bm{\mathfrak{w}}_k}{\partial s^2}\left(0,t\right)-\alpha\frac{\partial \bm{\mathfrak{w}}_k}{\partial s}\left(0,t\right)-\kappa \frac{\partial^2 \bm{\mathfrak{w}}_k}{\partial s \partial t}\left(0,t\right)=0,\quad k=1,2,3,\label{105}
\end{equation}
with feedback control parameters $\alpha,\kappa\geq 0$, and also have to do with pipe-flow systems on networks which belong to the class of energy-conservative systems, i.e.\
\begin{equation}
\sum_{k=1}^3\left(\frac{\partial^3 \bm{\mathfrak{w}}_k}{\partial s^3}\left(0,t\right)-(\gamma-\eta^2)\,\frac{\partial \bm{\mathfrak{w}}_k}{\partial s}\left(0,t\right)+\beta\eta \frac{\partial \bm{\mathfrak{w}}_k}{\partial t}\left(0,t\right)\right)=0.\label{104}
\end{equation}
The general control problem thus is to choose $\alpha$, $\kappa$, possibly conditioned on the restriction $\gamma> \eta^2$, so as to stabilise, in an appropriate sense, the resulting closed-loop ($\mathfrak{CL}$) system \eqref{101}--\eqref{104}.

Comparison of \eqref{104} with \eqref{eqa0123c} reveals that there is one notable difference namely the addition of extra terms $\beta\eta\,(\partial\bm{\mathfrak{w}}_k/\partial t)\left(0,t\right)$ to the vertex condition, which, as we have just noted, states that the (circulatory, gyroscopic) system is energy conserving when there is no damping in the control, $\kappa=0$. In fact, a justified derivation of \eqref{101} using Hamilton's principle will lead to the vertex conditions \eqref{103}--\eqref{104} when $\kappa=0$. Then \eqref{104} also has the physical interpretation as the force balance condition at the inner vertex just as in \cite[(2.6)]{MahinzaeimEtAl2021}. We digress briefly to illustrate this and at the same time derive other sets of possible vertex conditions which define an energy-conserving star graph of fluid-conveying pipes. This is of some independent interest. (Our discussion of these vertex conditions is brief and intended to be illustrative; the reader interested in greater detail is referred to the papers \cite{MR3349572,MR4072654,MR4433342,MR2070601}.)

Let $\bm{\mathfrak{w}}$ denote the triple of displacement functions $\left\{\bm{\mathfrak{w}}_k\right\}^3_{k=1}$ which, since we are only interested in the formal derivation of the vertex conditions, we may assume sufficiently smooth. We form the Lagrangian functional (the time constants $t_0$, $t_1$ are assumed arbitrary, but fixed, as usual)
\begin{align*}
{L}\left(\bm{\mathfrak{w}}\right)&=\int_{t_0}^{t_1}\frac{1}{2}\left\{\sum_{k=1}^3\int_0^1\left[(1-\beta^2)\left(\frac{\partial \bm{\mathfrak{w}}_k}{\partial t}\right)^2+\beta^2\left(\frac{\partial \bm{\mathfrak{w}}_k}{\partial t}+\frac{\eta}{\beta}\frac{\partial \bm{\mathfrak{w}}_k}{\partial s}\right)^2\right]ds\right\}dt\\
&\qquad-\int_{t_0}^{t_1}\frac{1}{2}\left\{\sum_{k=1}^3\int_0^1\left[\left(\frac{\partial^2 \bm{\mathfrak{w}}_k}{\partial s^2}\right)^2+\gamma\left(\frac{\partial \bm{\mathfrak{w}}_k}{\partial s}\right)^2\right]ds+\alpha\sum_{k=1}^3\left(\frac{\partial \bm{\mathfrak{w}}_k}{\partial s}\left(0,t\right)\right)^2\right\}dt
\end{align*}
wherein $\bm{\mathfrak{w}}_k\left(s,t\right)$ is abbreviated to $\bm{\mathfrak{w}}_k$. The first line is the time integral of the total kinetic energy of the graph, and the second line represents the time integral of the total potential energy, both in nondimensional forms. The first-order variation $\delta{L}\left(\bm{\mathfrak{w}}\right)$ of ${L}\left(\bm{\mathfrak{w}}\right)$ is given by
\begin{align*}
\delta{L}\left(\bm{\mathfrak{w}}\right)&=\int_{t_0}^{t_1}\Biggl\{\sum_{k=1}^3\int_0^1\biggl[\left(\frac{\partial \bm{\mathfrak{w}}_k}{\partial t}+\beta\eta\frac{\partial \bm{\mathfrak{w}}_k}{\partial s}\right)\frac{\partial\delta\bm{\mathfrak{w}}_k}{\partial t}+\left(\beta\eta\frac{\partial \bm{\mathfrak{w}}_k}{\partial t}-(\gamma-\eta^2)\,\frac{\partial \bm{\mathfrak{w}}_k}{\partial s}\right)\frac{\partial\delta\bm{\mathfrak{w}}_k}{\partial s}\\
&\hspace{0.1\linewidth}-\left(\frac{\partial^2 \bm{\mathfrak{w}}_k}{\partial s^2}\right)\frac{\partial^2\delta\bm{\mathfrak{w}}_k}{\partial s^2}\biggr]\,ds-\alpha\sum_{k=1}^3\left(\frac{\partial \bm{\mathfrak{w}}_k}{\partial s}\left(0,t\right)\right)\frac{\partial\delta\bm{\mathfrak{w}}_k}{\partial s}\left(0,t\right)\Biggr\}\,dt.
\end{align*}
By equating $\delta{L}\left(\bm{\mathfrak{w}}\right)$ to zero, it follows from using integration by parts that if the temporal endpoint conditions $\delta\bm{\mathfrak{w}}_k\left(s,t_0\right)=\delta\bm{\mathfrak{w}}_k\left(s,t_1\right)=0$, $k=1,2,3$, are satisfied (as always in Hamilton's principle) then
\begin{align*}
0&=\int_{t_0}^{t_1}\left\{\sum_{k=1}^3\int_0^1\left(\frac{\partial^4\bm{\mathfrak{w}}_k}{\partial s ^4}-(\gamma-\eta^2)\,\frac{\partial^2\bm{\mathfrak{w}}_k}{\partial s ^2}+2\beta\eta\frac{\partial^2\bm{\mathfrak{w}}_k}{\partial s \partial t}+\frac{\partial^2\bm{\mathfrak{w}}_k}{\partial t^2}\right)\delta\bm{\mathfrak{w}}_k\,ds\right\}dt\\
&\qquad-\int_{t_0}^{t_1}\Biggl\{\sum_{k=1}^3\left.\left(\frac{\partial^3 \bm{\mathfrak{w}}_k}{\partial s^3}-(\gamma-\eta^2)\,\frac{\partial \bm{\mathfrak{w}}_k}{\partial s}+\beta\eta\frac{\partial \bm{\mathfrak{w}}_k}{\partial t}\right)\delta\bm{\mathfrak{w}}_k\right|_0^1-\sum_{k=1}^3\left.\left(\frac{\partial^2 \bm{\mathfrak{w}}_k}{\partial s^2}\right)\frac{\partial\delta\bm{\mathfrak{w}}_k}{\partial s}\right|_0^1\\
&\hspace{0.1\linewidth}-\alpha\sum_{k=1}^3\left(\frac{\partial \bm{\mathfrak{w}}_k}{\partial s}\left(0,t\right)\right)\frac{\partial\delta\bm{\mathfrak{w}}_k}{\partial s}\left(0,t\right)\Biggr\}\,dt,
\end{align*}
and hence
\begin{align}
0&=\int_{t_0}^{t_1}\left\{\sum_{k=1}^3\int_0^1\left(\frac{\partial^4\bm{\mathfrak{w}}_k}{\partial s ^4}-(\gamma-\eta^2)\,\frac{\partial^2\bm{\mathfrak{w}}_k}{\partial s ^2}+2\beta\eta\frac{\partial^2\bm{\mathfrak{w}}_k}{\partial s \partial t}+\frac{\partial^2\bm{\mathfrak{w}}_k}{\partial t^2}\right)\delta\bm{\mathfrak{w}}_k\,ds\right\}dt\notag\\
&\qquad+\int_{t_0}^{t_1}\Biggl\{\sum_{k=1}^3\left(\frac{\partial^3 \bm{\mathfrak{w}}_k}{\partial s^3}\left(0,t\right)-(\gamma-\eta^2)\,\frac{\partial \bm{\mathfrak{w}}_k}{\partial s}\left(0,t\right)+\beta\eta\frac{\partial \bm{\mathfrak{w}}_k}{\partial t}\left(0,t\right)\right)\delta\bm{\mathfrak{w}}_k\left(0,t\right)\notag\\
&\hspace{0.1\linewidth}-\sum_{k=1}^3\left(\frac{\partial^2 \bm{\mathfrak{w}}_k}{\partial s^2}\left(0,t\right)-\alpha \frac{\partial \bm{\mathfrak{w}}_k}{\partial s}\left(0,t\right)\right)\frac{\partial\delta\bm{\mathfrak{w}}_k}{\partial s}\left(0,t\right)\Biggr\}\,dt,\label{eq_01axxy}
\end{align}
where we used \eqref{102}. Then, using the conditions
\begin{equation}\label{eq_01bxxy}
\left\{\begin{aligned}
&\bm{\mathfrak{w}}_j\left(0,t\right)=\bm{\mathfrak{w}}_k\left(0,t\right),\quad j,k=1,2,3,\\
&\frac{\partial^2 \bm{\mathfrak{w}}_k}{\partial s^2}\left(0,t\right)-\alpha\frac{\partial \bm{\mathfrak{w}}_k}{\partial s}\left(0,t\right)=0,\quad k=1,2,3,\\
&\sum_{k=1}^3\left(\frac{\partial^3 \bm{\mathfrak{w}}_k}{\partial s^3}\left(0,t\right)-(\gamma-\eta^2)\,\frac{\partial \bm{\mathfrak{w}}_k}{\partial s}\left(0,t\right)+\beta\eta \frac{\partial \bm{\mathfrak{w}}_k}{\partial t}\left(0,t\right)\right)=0
\end{aligned}\right.
\end{equation}
applying at the inner vertex, \eqref{eq_01axxy} becomes equivalent to \eqref{101}, since the $\delta\bm{\mathfrak{w}}_k$ are otherwise completely arbitrary. It is clear that these vertex conditions \eqref{eq_01bxxy} correspond to the same vertex conditions imposed on the $\bm{\mathfrak{w}}_k$ as in \eqref{103}--\eqref{104} for the case where $\kappa=0$.

However there are other sets of vertex conditions arising from \eqref{eq_01axxy}, for example
\begin{equation}\label{eq_01cxxyy}
\left\{\begin{aligned}
&\bm{\mathfrak{w}}_j\left(0,t\right)=\bm{\mathfrak{w}}_k\left(0,t\right),\quad j,k=1,2,3,\\
&\sum_{k=1}^3\frac{\partial \bm{\mathfrak{w}}_k}{\partial s}\left(0,t\right)=0,\\
&\frac{\partial^2 \bm{\mathfrak{w}}_j}{\partial s^2}\left(0,t\right)-\alpha\frac{\partial \bm{\mathfrak{w}}_j}{\partial s}\left(0,t\right)=\frac{\partial^2 \bm{\mathfrak{w}}_k}{\partial s^2}\left(0,t\right)-\alpha\frac{\partial \bm{\mathfrak{w}}_k}{\partial s}\left(0,t\right),\quad j,k=1,2,3,\\
&\sum_{k=1}^3\left(\frac{\partial^3 \bm{\mathfrak{w}}_k}{\partial s^3}\left(0,t\right)-(\gamma-\eta^2)\,\frac{\partial \bm{\mathfrak{w}}_k}{\partial s}\left(0,t\right)+\beta\eta \frac{\partial \bm{\mathfrak{w}}_k}{\partial t}\left(0,t\right)\right)=0,
\end{aligned}\right.
\end{equation}
or
\begin{equation}\label{eq_01cxxy}
\left\{\begin{aligned}
&\bm{\mathfrak{w}}_j\left(0,t\right)=\bm{\mathfrak{w}}_k\left(0,t\right),\quad j,k=1,2,3,\\
&\frac{\partial \bm{\mathfrak{w}}_j}{\partial s}\left(0,t\right)=\frac{\partial \bm{\mathfrak{w}}_k}{\partial s}\left(0,t\right),\quad j,k=1,2,3,\\
&\sum_{k=1}^3\left(\frac{\partial^2 \bm{\mathfrak{w}}_k}{\partial s^2}\left(0,t\right)-\alpha\frac{\partial \bm{\mathfrak{w}}_k}{\partial s}\left(0,t\right)\right)=0,\\
&\sum_{k=1}^3\left(\frac{\partial^3 \bm{\mathfrak{w}}_k}{\partial s^3}\left(0,t\right)-(\gamma-\eta^2)\,\frac{\partial \bm{\mathfrak{w}}_k}{\partial s}\left(0,t\right)+\beta\eta \frac{\partial \bm{\mathfrak{w}}_k}{\partial t}\left(0,t\right)\right)=0,
\end{aligned}\right.
\end{equation}
which also yield an energy-conservative system. These conditions have interesting and physically plausible interpretations too. For example with \eqref{eq_01cxxy} we consider the situation where the pipes are rigidly joined to each other and coupled by a rotational spring with stiffness parameter $\alpha$, and both the tensile forces on the vertex as well as the fluid-flow always remain fixed in direction along the axes of the pipes (which indeed is the case in all three examples considered above, for otherwise the force balance conditions become $\sum_{k=1}^3\,(\partial^3 \bm{\mathfrak{w}}/\partial s^3)\left(0,t\right)=0$, which is consistent with the shear force boundary condition in \eqref{eqa0123bx} and the explanations following it).

The above considerations already indicate that in the closed-loop systems associated with \eqref{101} and either of the vertex conditions \eqref{eq_01bxxy}, \eqref{eq_01cxxyy}, or \eqref{eq_01cxxy}, the system operators will be skewadjoint (cf.\ Lemma \ref{L-2-1}), which implies that their spectra are purely imaginary which in turn implies that the semigroups they generate are certainly not uniformly exponentially stable. The purpose of our work in the sequel is to prove that uniform exponential stability obtains by applying feedback control moments on the inner vertex, i.e.\ for the $\mathfrak{CL}$-system. (Let us note, before proceeding, that Guo and Xie \cite{MR2070993} have studied a somewhat related
stabilisation problem in which a trivial graph consisting of a serial network of two axially moving beams is considered with much simpler vertex conditions; our approach here will therefore, of necessity, be substantially more technical than theirs.)

\subsection{Vertex feedback stabilisability}\label{subsec_01}

In Section \ref{sec_02} we formulate the $\mathfrak{CL}$-system as an abstract system of the form
\begin{equation}\label{exstab012xb}
\dot{{x}}\left(t\right)=\mathcal{T}x\left(t\right), \quad {{x}}\left(0\right)=x_0,
\end{equation} 
on a Hilbert space $\mathscr{X}$ where the system operator $\mathcal{T}:\mathscr{D}\left(\mathcal{T}\right)\subset \mathscr{X}\rightarrow \mathscr{X}$ is the infinitesimal generator of a $C_0$-semigroup in $\mathscr{X}$. In preparation for what follows, we recall that for any $C_0$-semigroup $\mathbb{S}\left(t\right)$ on $\mathscr{X}$, there exist constants $M>0$, $\varpi\in\mathbb{R}$ such that $\left\|\mathbb{S}\left(t\right)\right\|_\mathscr{X}\le Me^{\varpi t}$, $t\in \mathbf{R}_+$, where $\left\|\,\cdot\,\right\|_\mathscr{X}$ is the norm in $\mathscr{X}$. The semigroup is said to be uniformly exponentially stable if the growth rate $\varpi_0<0$ (defined to be the infimum over all $\varpi\in\mathbb{R}$ for which there exists $M\geq 1$ such that the aforementioned inequality holds). So in our case, the $\mathfrak{CL}$-system is then said to be stabilisable via vertex feedback control moments, in short vertex feedback stabilisable, if for some $\varpi_0<0$ there exists a constant $M \ge1 $ such that
\begin{equation}\label{exstab012b}
\left\|{{x}}\left(t\right)\right\|_\mathscr{X}=\left\|\mathbb{S}\left(t\right)x_0\right\|_\mathscr{X}\le Me^{\varpi_0 t}\left\|x_0\right\|_\mathscr{X},\quad t\in\mathbf{R}_+,
\end{equation}
for every solution ${{x}}\left(t\right)$ of \eqref{exstab012xb} corresponding to $x_0\in\mathscr{D}\left(\mathcal{T}\right)$. Then, as $t\rightarrow\infty$, $\left\|{x}\left(t\right)\right\|_\mathscr{X}\rightarrow 0$ uniformly exponentially (where $\left\|{x}\left(t\right)\right\|^2_\mathscr{X}$ is a measure of the energy of the $\mathfrak{CL}$-system at a given time $t$, see Section \ref{sec_02}).

Although there are several approaches to verification of \eqref{exstab012b}, a spectral approach is the most natural one to follow. The spectral approach as it is understood here is variously considered in the literature (see the text by Guo and Wang \cite{GuoWang2019} and the many references therein) under the heading ``Riesz basis approach'' -- as distinguished from the spectral approach based on the well-known Gearhart--Pr\"uss--Huang theorem -- and is used in \cite{MahinzaeimEtAl2021,MahinzaeimEtAl2022} to establish the uniform exponential stability results. In particular we note that, to our knowledge, \cite{MahinzaeimEtAl2022} contains the only rigorous study to date to take the spectral approach to analyse the boundary feedback stabilisability of pipe-flow systems.

For the spectral approach to work for a semigroup formulation of the $\mathfrak{CL}$-system, in principle, we must show that the system operator $\mathcal{T}$ is a discrete spectral operator in the sense of \cite[Chapters XVIII--XX]{DunfordSchwartz1971}. In this case, the growth rate satisfies $\varpi_0=\sup\left\{\operatorname{Re}\left(\lambda\right)~\middle|~\lambda\in\sigma\left(\mathcal{T}\right)\right\}$, $\sigma\left(\mathcal{T}\right)$ being the spectrum of $\mathcal{T}$, hence the familiar Spectrum Determined Growth Assumption will be satisfied, and consequently one is allowed to use spectral information as a criterion for checking whether $\varpi_0<0$ alone, as it is rigorously justified in \cite{Miloslavskii1985,Roh1982} and elsewhere. We point out that, in general, an operator $\mathcal{Q}$, say, in Hilbert space is a discrete spectral operator if it has a compact resolvent and its root vectors (eigen- and associated vectors) are a Riesz basis for the underlying Hilbert space (see \cite[Corollary XVIII.2.33]{DunfordSchwartz1971}). For this the following property is important: the spectrum $\sigma\left(\mathcal{Q}\right)$ consists of an interpolating sequence $\left\{\lambda_n\right\}$ of eigenvalues, i.e., $\sup\left|\operatorname{Re}\left(\lambda_n\right)\right|<\infty$ and $\inf_{n\neq m}\left|\lambda_n-\lambda_m\right|>0$ for all $n,m\in\mathbb{N}$.
Note that an operator with these properties automatically has the semigroup generating property (see \cite[Theorem 2.3.5]{CurtainZwart1995}).

The organisation of the paper follows. In Section \ref{sec_02} the operator formalism indicated above is given in an appropriate Hilbert state space so that it will be possible to study well-posedness of the $\mathfrak{CL}$-system in the setting of $C_0$-semigroups. Section \ref{sec_03} deals with a complete spectral analysis (existence, location, multiplicity and asymptotics for eigenvalues) of the system operator. The completeness, minimality, and Riesz basis properties of its root vectors are investigated in Section \ref{sec_04}, where the satisfaction of the Spectrum Determined Growth Assumption is verified. We conclude in Section \ref{sec_05} with a positive result for the vertex feedback stabilisability of the $\mathfrak{CL}$-system.

\section{Operator formulation and well-posedness}\label{sec_02}

We begin this section with a summary of the restrictions on the parameters specified in the Introduction:
\begin{equation*}
\alpha,\kappa\geq 0,\quad 0<\beta<1,\quad \gamma>0,\quad \eta\geq 0,
\end{equation*}
and, as in \cite{MahinzaeimEtAl2022}, it is assumed throughout that
\begin{equation*}
\gamma>\eta^2.
\end{equation*}
Set
\begin{equation*}
\bm{\mathfrak{v}}_k\left(s,t\right)=\frac{\partial \bm{\mathfrak{w}}_k }{\partial t}\left(s,t\right),\quad \bm{{x}}_k\left(s,t\right)=\left(\bm{\mathfrak{w}}_k\left(s,t\right),\bm{\mathfrak{v}}_k\left(s,t\right)\right),\quad k=1,2,3,
\end{equation*}
and
\begin{equation*}
 \bm{{x}}\left(s,t\right)=\left(\bm{{x}}_1\left(s,t\right),\bm{{x}}_2\left(s,t\right),\bm{{x}}_3\left(s,t\right)\right).
\end{equation*}
We then introduce the space $\bm{H}^m_*\left(0,1\right)\coloneqq\left\{w\in \bm{H}^m\left(0,1\right)~\middle|~w\left(1\right)=0\right\}$, $m=1,2$, where  $\bm{H}^m\left(0,1\right)$ denotes the $m$-th order Sobolev space. It follows that $\bm{H}^2_*\left(0,1\right)$, equipped with the inner product
\begin{equation*}
\left(w,\widetilde{w}\right)=\int^1_0w''\left(s\right)\overline{\widetilde{w}''\left(s\right)}\,ds
+(\gamma-\eta^2)\int^1_0 w'\left(s\right)\overline{\widetilde{w}'\left(s\right)}\,ds
+\alpha w'\left(0\right)\overline{\widetilde{w}'\left(0\right)},
\end{equation*}
is a Hilbert space.

Let now $\bm{L}_2\left(\mathbf{G}\right)$ be the metric space of vector-valued functions $v=\left(v_1,v_2,v_3\right)$ for which $v_k\in \bm{L}_2\left(0,1\right)$, $k=1,2,3$. We similarly define the space $\bm{H}^2_*\left(\mathbf{G}\right)$ of vector-valued functions $w=\left(w_1,w_2,w_3\right)$ for which $w_k\in \bm{H}^2_*\left(0,1\right)$, $k=1,2,3$, and $w_j\left(0\right)=w_k\left(0\right)$, $j,k=1,2,3$. In the Hilbert state space $\mathscr{X}=\bm{H}^2_*\left(\mathbf{G}\right)\times \bm{L}_2\left(\mathbf{G}\right)$, i.e.,
\begin{equation*}
\mathscr{X} \coloneqq\left\{
x=\left\{x_k\right\}^3_{k=1}~\middle|~\begin{gathered}
x_k=\left(w_k,v_k\right)\in \bm{H}^2_*\left(0,1\right)\times \bm{L}_2\left(0,1\right),\\
w_j\left(0\right)=w_k\left(0\right),\quad j,k=1,2,3
\end{gathered}
\right\}
\end{equation*}
with the inner product (inducing an energy-motivated norm in $\mathscr{X}$)
\begin{gather*}
\left(x,\widetilde{x}\right)_{\mathscr{X}} \coloneqq\left(w,\widetilde{w}\right)_{\bm{H}^2_*\left(\mathbf{G}\right)}+\left(v,\widetilde{v}\right)_{\bm{L}_2\left(\mathbf{G}\right)},
\end{gather*}
where
\begin{gather*}
\left(w,\widetilde{w}\right)_{\bm{H}^2_*\left(\mathbf{G}\right)}=\sum_{k=1}^3\left[\int^1_0w_k''\left(s\right)\overline{\widetilde{w}_k''\left(s\right)}\,ds
+(\gamma-\eta^2)\int^1_0 w_k'\left(s\right)\overline{\widetilde{w}_k'\left(s\right)}\,ds
+\alpha w_k'\left(0\right)\overline{\widetilde{w}_k'\left(0\right)}\right],\\
\left(v,\widetilde{v}\right)_{\bm{L}_2\left(\mathbf{G}\right)}=\sum_{k=1}^3\int^1_0v_k\left(s\right)\overline{\widetilde{v}_k\left(s\right)}\,ds,
\end{gather*}
we define the operators $\mathcal{A}$, $\mathcal{B}$ on the domains
\begin{align}
\mathscr{D}\left(\mathcal{A}\right)&=\left\{x=\left\{x_k\right\}^3_{k=1}\in \mathscr{X}~\middle|
~\begin{gathered}
x_k=\left(w_k,v_k\right)\in (\bm{H}^4\left(0,1\right)\cap \bm{H}^2_*\left(0,1\right))\times \bm{H}^2_*\left(0,1\right), \\
w''_k\left(1\right)=0,\quad w''_k\left(0\right)-\alpha w'_k\left(0\right)-\kappa v'_k\left(0\right)=0,\quad k=1,2,3,\\
\sum_{k=1}^3\,\bigl[w^{\left(3\right)}_k\left(0\right)-(\gamma-\eta^2)\, w_{k}'\left(0\right)+\beta\eta v_k\left(0\right)\bigr]=0
\end{gathered}\right\},\label{204}\\
\mathscr{D}\left(\mathcal{B}\right)&=\left\{x=\left\{x_k\right\}^3_{k=1}\in \mathscr{X}~\middle|
~x_k=\left(w_k,v_k\right)\in \bm{H}^2_*\left(0,1\right)\times \bm{H}^1_*\left(0,1\right),\quad k=1,2,3\right\}\label{206}
\end{align}
by
\begin{align}
\mathcal{A}x&\coloneqq\bigl\{\bigl(v_k,-w_k^{\left(4\right)}+(\gamma-\eta^2)\,w''_k\bigr)\bigr\}_{k=1}^3,\label{203}\\
\mathcal{B}x&\coloneqq\bigl\{\bigl(0,-2\beta\eta v_k'\bigr)\bigr\}_{k=1}^3,\label{205}
\end{align}
respectively. We recall that the continuity condition $w_j\left(0\right)=w_k\left(0\right)$, $j,k=1,2,3$, is ``hidden'' in the definition of $\mathscr{X}$. Clearly $\mathscr{D}\left(\mathcal{A}\right)\subset \mathscr{D}\left(\mathcal{B}\right)$, and one may verify quite readily that $\mathcal{B}$ is relatively compact with respect to $\mathcal{A}$ (in the sense of \cite[Section IV.1.3]{Kato1995}).

Define $\left[x\left(t\right)\right]\left(s\right)\coloneqq \bm{{x}}\left(s,t\right)$. The $\mathfrak{CL}$-system may be formulated abstractly thus:
\begin{equation}\label{207}
\left\{\begin{split}
\dot{{x}}\left(t\right)&=\mathcal{T}{x}\left(t\right),\quad \mathcal{T}\coloneqq \mathcal{A}+\mathcal{B},\quad \mathscr{D}\left(\mathcal{T}\right)=\mathscr{D}\left(\mathcal{A}\right),\\
{{x}}\left(0\right)&=x_0,
\end{split}\right.
\end{equation}
where
\begin{equation*}
{x}\left(t\right)=\left\{\left(\bm{\mathfrak{w}}_k\left(\,\cdot\,,t\right),\bm{\mathfrak{v}}_k\left(\,\cdot\,,t\right)\right)\right\}^3_{k=1},\quad x_0=\left\{\left(g_k,h_k\right)\right\}^3_{k=1}.
\end{equation*}
Substitute ${x}\left(t\right)=x\exp\left(\lambda t\right)$, $x\in\mathscr{X}$, in \eqref{207} and note for later reference that
\begin{equation}\label{eq_01xaa02}
\mathcal{T}x=\lambda x,\quad x\in\mathscr{D}\left(\mathcal{A}\right),\quad \lambda\in\mathbb{C},
\end{equation}
which is the spectral problem for the $\mathfrak{CL}$-system with spectral parameter $\lambda$.

Our central well-posedness result is the following:
\begin{theorem}\label{T-2-1}
The $\mathfrak{CL}$-system is well posed in the sense that \eqref{207} has for any $x_0\in \mathscr{D}\left(\mathcal{A}\right)$ a unique solution ${x}\in \bm{C}^1\left(\left(0,\infty\right); \mathscr{X}\right)\cap \bm{C}\left(\mathbf{R}_+; \mathscr{D}\left(\mathcal{A}\right)\right)$ given by
\begin{equation*}
{x}\left(t\right)=\mathbb{S}\left(t\right)x_0
\end{equation*}
where $\mathbb{S}\left(t\right)$ is a contractive $C_0$-semigroup on $\mathscr{X}$ with infinitesimal generator $\mathcal{T}$.
\end{theorem}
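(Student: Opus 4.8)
The plan is to obtain Theorem \ref{T-2-1} from the Lumer--Phillips theorem applied to $\mathcal{T}=\mathcal{A}+\mathcal{B}$. Density of $\mathscr{D}(\mathcal{T})=\mathscr{D}(\mathcal{A})$ in $\mathscr{X}$ is clear (it contains, for instance, all triples $\{(w_k,v_k)\}_{k=1}^3$ of smooth functions that vanish together with their second derivatives near $s=1$ and are compatible at the inner vertex), so it remains to show that $\mathcal{T}$ is dissipative and that $\lambda_0 I-\mathcal{T}$ is surjective for some $\lambda_0>0$.

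For dissipativity, fix $x=\{(w_k,v_k)\}_{k=1}^3\in\mathscr{D}(\mathcal{A})$; since $\mathcal{A}x\in\mathscr{X}$, its first component $\{v_k\}$ lies in $\bm{H}^2_*(\mathbf{G})$, so in particular $v_j(0)=v_k(0)$ for $j,k=1,2,3$. Expanding $(\mathcal{T}x,x)_{\mathscr{X}}=(v,w)_{\bm{H}^2_*(\mathbf{G})}+\sum_k\int_0^1\bigl(-w_k^{\left(4\right)}+(\gamma-\eta^2)w_k''-2\beta\eta v_k'\bigr)\overline{v_k}\,ds$ and integrating by parts twice on each edge, I would check that, after taking real parts, the purely interior integrals coming from $(v,w)_{\bm{H}^2_*(\mathbf{G})}$ cancel against those produced by the $-w_k^{\left(4\right)}$ and $(\gamma-\eta^2)w_k''$ terms. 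The boundary terms at $s=1$ vanish on account of $w_k(1)=w_k''(1)=0$ (see \eqref{102} and \eqref{204}) and $v_k(1)=0$. At $s=0$, the moment condition $w_k''(0)-\alpha w_k'(0)-\kappa v_k'(0)=0$ from \eqref{204} together with the $\alpha$-term in the $\bm{H}^2_*(\mathbf{G})$-inner product leaves exactly $-\kappa\sum_k|v_k'(0)|^2$, while the remaining inner-vertex terms $\sum_k\operatorname{Re}\bigl([w_k^{\left(3\right)}(0)-(\gamma-\eta^2)w_k'(0)]\overline{v_k(0)}\bigr)$ and the contribution $-2\beta\eta\sum_k\operatorname{Re}\int_0^1 v_k'\overline{v_k}\,ds=\beta\eta\sum_k|v_k(0)|^2$ arising from $\mathcal{B}$ cancel identically once the force-balance condition $\sum_k[w_k^{\left(3\right)}(0)-(\gamma-\eta^2)w_k'(0)+\beta\eta v_k(0)]=0$ and $v_j(0)=v_k(0)$ are used. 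The outcome is
\begin{equation*}
\operatorname{Re}\left(\mathcal{T}x,x\right)_{\mathscr{X}}=-\kappa\sum_{k=1}^3\bigl|v_k'\left(0\right)\bigr|^2\le 0,\qquad x\in\mathscr{D}\left(\mathcal{A}\right),
\end{equation*}
so $\mathcal{T}$ is dissipative (with equality when $\kappa=0$, consistent with the skewadjointness remarked on in the Introduction).

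For the range condition, fix $\lambda_0>0$ and $y=\{(y_k^{(1)},y_k^{(2)})\}_{k=1}^3\in\mathscr{X}$ and solve $(\lambda_0 I-\mathcal{T})x=y$. The first component forces $v_k=\lambda_0 w_k-y_k^{(1)}$, and inserting this into the second component gives, edge by edge, the fourth-order constant-coefficient equation
\begin{equation*}
w_k^{\left(4\right)}-\left(\gamma-\eta^2\right)w_k''+2\beta\eta\lambda_0 w_k'+\lambda_0^2 w_k=y_k^{\left(2\right)}+\lambda_0 y_k^{\left(1\right)}+2\beta\eta\bigl(y_k^{\left(1\right)}\bigr)',
\end{equation*}
whose right-hand side belongs to $\bm{L}_2(0,1)$ because $y_k^{(1)}\in\bm{H}^2_*(0,1)$, while the conditions defining $\mathscr{D}(\mathcal{A})$ in \eqref{204} become, via $v_k=\lambda_0 w_k-y_k^{(1)}$, a system of twelve linear two-point boundary/vertex conditions for the twelve integration constants of $\{w_k\}$. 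The associated homogeneous problem admits only the trivial solution, since a nonzero solution would be an eigenvector of $\mathcal{T}$ at the eigenvalue $\lambda_0>0$, impossible by the dissipativity just established; hence the Fredholm alternative yields a (unique) solution, which lies in $\bm{H}^4(0,1)\cap\bm{H}^2_*(0,1)$ on each edge by the classical regularity theory for linear ordinary differential equations, and then $v_k=\lambda_0 w_k-y_k^{(1)}\in\bm{H}^2_*(0,1)$. Thus $x\in\mathscr{D}(\mathcal{A})$ with $(\lambda_0 I-\mathcal{T})x=y$, so $\operatorname{ran}(\lambda_0 I-\mathcal{T})=\mathscr{X}$.

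With these two facts in hand, the Lumer--Phillips theorem shows that $\mathcal{T}$ is m-dissipative and generates a contraction $C_0$-semigroup $\mathbb{S}(t)$ on $\mathscr{X}$; the well-posedness and regularity claims, including uniqueness and $x\in\bm{C}^1((0,\infty);\mathscr{X})\cap\bm{C}(\mathbf{R}_+;\mathscr{D}(\mathcal{A}))$, then follow from the standard theory of the abstract Cauchy problem \eqref{exstab012xb} for infinitesimal generators. I expect the dissipativity identity to be the principal obstacle: no single computation is deep, but the several boundary terms at the inner vertex must be organised with care, and the decisive cancellation of the gyroscopic contributions $\beta\eta v_k(0)$ rests on the precise structure of the force-balance vertex condition \eqref{104} (which is exactly where its extra term pays off); by comparison the solvability and regularity in the range step are routine.
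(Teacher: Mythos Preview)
Your proof is correct and, like the paper's, reduces Theorem~\ref{T-2-1} to the Lumer--Phillips theorem via the dissipativity identity $\operatorname{Re}(\mathcal{T}x,x)_{\mathscr{X}}=-\kappa\sum_{k}|v_k'(0)|^2$, which is exactly~\eqref{214} in Lemma~\ref{L-2-1}. The only substantive difference is in the range condition: the paper first proves $0\in\varrho(\mathcal{T})$ by a Lax--Milgram argument for the weak form of~\eqref{213} and then invokes openness of the resolvent set to get some $\lambda>0$ in $\varrho(\mathcal{T})$, whereas you argue directly at a fixed $\lambda_0>0$ via the finite-dimensional Fredholm alternative for the twelve-by-twelve linear system determining the integration constants of the edge ODEs. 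Your route is a bit more elementary (no weak formulation needed); the paper's has the side benefit that it simultaneously yields compactness of $\mathcal{T}^{-1}$, which is used again in Section~\ref{sec_03}. One minor imprecision: your parenthetical remark on density is not quite right, since elements of $\mathscr{D}(\mathcal{A})$ must also satisfy the moment and force-balance conditions at $s=0$, not merely be compatible there; density itself is nonetheless routine.
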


We will prove the theorem with the help of the following lemma.
\begin{lemma}\label{L-2-1}
The following statements hold:
\begin{enumerate}[\normalfont(1)]
\item\label{T-3-2-a} $\mathcal{T}$ is closed and densely defined.
\item\label{T-3-2-b} $0\in\varrho\left(\mathcal{T}\right)$ (the resolvent set of $\mathcal{T}$) and $\mathcal{T}^{-1}$ is compact.
\item\label{T-3-2-c} $\mathcal{T}$ is maximal dissipative for $\kappa>0$ and skewadjoint for $\kappa=0$.
\end{enumerate}
\end{lemma}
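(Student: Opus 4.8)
The plan is to prove Lemma \ref{L-2-1} by treating each item in turn, with most of the work going into \eqref{T-3-2-b}, since \eqref{T-3-2-a} and \eqref{T-3-2-c} will largely follow from it together with standard dissipativity computations. For \eqref{T-3-2-b}, I would first show that $0\in\varrho(\mathcal{T})$ by solving $\mathcal{T}x=y$ for given $y=\{(f_k,g_k)\}_{k=1}^3\in\mathscr{X}$. Writing out \eqref{203} and \eqref{205} with $\lambda=0$, the first component gives $v_k=f_k\in\bm{H}^2_*(0,1)$ directly, and the second component reduces to the fourth-order boundary-value problem $-w_k^{(4)}+(\gamma-\eta^2)w_k''=g_k+2\beta\eta f_k'$ on each edge, subject to the vertex conditions in \eqref{204}. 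I would write this out as a decoupled system on the edges, coupled only through the continuity condition $w_j(0)=w_k(0)$, the moment condition $w_k''(0)-\alpha w_k'(0)=\kappa f_k'(0)$ (which is now a prescribed datum because $v_k=f_k$ is known), and the balance condition $\sum_k[w_k^{(3)}(0)-(\gamma-\eta^2)w_k'(0)+\beta\eta f_k(0)]=0$. The natural route is to set up the variational (weak) formulation of this boundary-value problem in $\bm{H}^2_*(\mathbf{G})$: the sesquilinear form is essentially $(w,\widetilde{w})_{\bm{H}^2_*(\mathbf{G})}$ (possibly with the $(\gamma-\eta^2)$-term and $\alpha$-term appearing naturally from the integration by parts), which is coercive on $\bm{H}^2_*(\mathbf{G})$ precisely because $\gamma>\eta^2$ and $\alpha\ge 0$, so Lax--Milgram yields a unique weak solution; elliptic regularity on each edge then gives $w_k\in \bm{H}^4(0,1)$ and recovers the pointwise vertex conditions, placing $x=(w,f)\in\mathscr{D}(\mathcal{A})$. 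Boundedness of $\mathcal{T}^{-1}:\mathscr{X}\to\mathscr{X}$ follows from the Lax--Milgram estimate plus the trivial bound $\|v\|=\|f\|$; and compactness of $\mathcal{T}^{-1}$ follows since it maps $\mathscr{X}$ into $\mathscr{D}(\mathcal{A})\subset \bm{H}^4(\mathbf{G})\times\bm{H}^2_*(\mathbf{G})$, which embeds compactly into $\mathscr{X}=\bm{H}^2_*(\mathbf{G})\times\bm{L}_2(\mathbf{G})$ by Rellich--Kondrachov on each edge.

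For \eqref{T-3-2-a}, closedness of $\mathcal{T}$ is immediate once \eqref{T-3-2-b} is established: $\mathcal{T}^{-1}$ is a bounded everywhere-defined operator, hence closed, hence its inverse $\mathcal{T}$ is closed. Density of $\mathscr{D}(\mathcal{T})=\mathscr{D}(\mathcal{A})$ in $\mathscr{X}$ I would get either directly, by approximating an arbitrary $x=(w,v)\in\mathscr{X}$ with elements satisfying the (finitely many, linear) vertex conditions in \eqref{204} — smooth compactly-supported-away-from-the-vertices functions are dense and the vertex conditions only constrain boundary jets — or, more cheaply, by noting that $\mathscr{R}(\mathcal{T})=\mathscr{X}$ from \eqref{T-3-2-b} and invoking the standard fact that the adjoint computation in \eqref{T-3-2-c} forces $\mathscr{D}(\mathcal{T})$ to be dense (an operator with nondense domain cannot be maximal dissipative). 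I would present the direct density argument to keep things self-contained.

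For \eqref{T-3-2-c}, the core computation is to evaluate $\operatorname{Re}(\mathcal{T}x,x)_{\mathscr{X}}$ for $x=(w,v)\in\mathscr{D}(\mathcal{A})$. Using the energy inner product, $(\mathcal{T}x,x)_{\mathscr{X}}=(v,w)_{\bm{H}^2_*(\mathbf{G})}+(-w^{(4)}+(\gamma-\eta^2)w''-2\beta\eta v',v)_{\bm{L}_2(\mathbf{G})}$; integrating by parts twice in the second term on each edge and using the outer-vertex conditions \eqref{102} together with the inner-vertex conditions in \eqref{204}, the bulk terms involving $w$ cancel against $(v,w)_{\bm{H}^2_*(\mathbf{G})}$, and after collecting the boundary contributions at $s=0$ — here the moment condition $w_k''(0)=\alpha w_k'(0)+\kappa v_k'(0)$ and the force-balance condition are used crucially — one is left with $\operatorname{Re}(\mathcal{T}x,x)_{\mathscr{X}}=-\kappa\sum_{k=1}^3|v_k'(0)|^2\le 0$, the gyroscopic term $-2\beta\eta\operatorname{Re}\int v_k'\overline{v_k}\,ds=-\beta\eta\sum_k(|v_k(1)|^2-|v_k(0)|^2)$ combining with the $\beta\eta v_k(0)$ term in the balance condition so that its real part vanishes (this cancellation is exactly the ``energy-conservativeness'' discussed after \eqref{104}). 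Hence $\mathcal{T}$ is dissipative, and maximal dissipative because $\mathscr{R}(\mathcal{I}-\mathcal{T})=\mathscr{X}$ — which again follows from the same Lax--Milgram argument applied to $\mathcal{T}x-x=y$ instead of $\mathcal{T}x=y$ (the shift only improves coercivity). When $\kappa=0$ the same computation gives $\operatorname{Re}(\mathcal{T}x,x)_{\mathscr{X}}=0$, i.e.\ $\mathcal{T}$ is skew-symmetric; combined with $0\in\varrho(\mathcal{T})$ (hence $\pm i\notin$ an obstruction) and density, a routine argument (e.g.\ $\mathcal{T}$ and $-\mathcal{T}$ both dissipative with $\mathscr{R}(\mathcal{I}\mp\mathcal{T})=\mathscr{X}$, so both maximal dissipative, so $\mathcal{T}^*=-\mathcal{T}$) gives skewadjointness. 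The step I expect to be the main obstacle is the careful bookkeeping of the boundary terms at the inner vertex in the $\operatorname{Re}(\mathcal{T}x,x)_{\mathscr{X}}$ computation: one must integrate by parts on each edge, sum over $k$, and verify that precisely the right combination of the continuity, moment, and force-balance conditions makes every term cancel except $-\kappa\sum_k|v_k'(0)|^2$; getting the signs and the coupling through $w(0)=\lim_k w_k(0)$ exactly right (so that $\sum_k w_k^{(3)}(0)\,\overline{w(0)}$-type terms can be pulled out of the sum) is where errors creep in, and it is also where the specific choice of vertex conditions \eqref{105}--\eqref{104} — as opposed to the alternatives \eqref{eq_01cxxyy}, \eqref{eq_01cxxy} — shows its hand.
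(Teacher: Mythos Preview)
Your proposal is correct and follows essentially the same route as the paper: Lax--Milgram on $\bm{H}^2_*(\mathbf{G})$ for surjectivity (the paper also checks injectivity separately, which you get for free from uniqueness), compact Sobolev embeddings for compactness of $\mathcal{T}^{-1}$, and the integration-by-parts identity $\operatorname{Re}(\mathcal{T}x,x)_{\mathscr{X}}=-\kappa\sum_k|v_k'(0)|^2$ for dissipativity. The only cosmetic difference is that the paper obtains maximality from $0\in\varrho(\mathcal{T})$ plus openness of the resolvent set rather than redoing Lax--Milgram at $\lambda=1$; note that at $\lambda=1$ the first-order term $2\beta\eta w_k'$ enters the bilinear form, so ``the shift only improves coercivity'' is slightly glib---you would need a Cauchy--Schwarz/absorption step there---but it goes through.
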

\begin{proof}
We begin with a proof of statement \ref{T-3-2-b} and show, first of all, that $\mathcal{T}$ is injective and surjective. To this end for $\lambda=0$ we consider the solution $x$ of \eqref{eq_01xaa02}. Then we clearly have $v_k=0$, $k=1,2,3$, and the $w_k=w_k\left(\lambda,s\right)$ satisfy the boundary-eigenvalue problem
\begin{equation}\label{208}
\left\{\begin{aligned}
&w^{\left(4\right)}_k-(\gamma-\eta^2)\, w''_k=0,\quad k=1,2,3,\\
&w_k\left(1\right)=w''_k\left(1\right)=0,\quad k=1,2,3,\\
&w_j\left(0\right)=w_k\left(0\right),\quad j,k=1,2,3,\\
&w''_k\left(0\right)-\alpha w'_k\left(0\right)=0,\quad k=1,2,3,\\
&\sum^3_{k=1}\,\bigl[w^{\left(3\right)}_k\left(0\right)-(\gamma-\eta^2)\, w'_k\left(0\right)\bigr]=0.
\end{aligned}\right.
\end{equation}
We multiply the differential equations in \eqref{208} by the conjugate of $w_k$, $k=1,2,3$, and integrate from $0$ to $1$. Integrating by parts, making use of the vertex conditions $w_k\left(1\right)=w''_k\left(1\right)=0$, $w''_k\left(0\right)-\alpha w'_k\left(0\right)=0$, $k=1,2,3$, we get
\begin{align*}
0&=\int_{0}^{1}\left|w_k''\left(s\right)\right|^2ds+(\gamma-\eta^2)\int_{0}^{1}\left|w_k'\left(s\right)\right|^2ds+\alpha\left|w_k'\left(0\right)\right|^2\\
&\qquad-\bigl[w^{\left(3\right)}_k\left(0\right)-(\gamma-\eta^2)\, w'_k\left(0\right)\bigr]\,\overline{w_k\left(0\right)},\quad k=1,2,3.
\end{align*}
Subsequent summation over $k=1,2,3$, using the continuity condition $w_j\left(0\right)=w_k\left(0\right)$, $j,k=1,2,3$, and force balance condition $\sum^3_{k=1}\,\bigl[w^{\left(3\right)}_k\left(0\right)-(\gamma-\eta^2)\, w'_k\left(0\right)\bigr]=0$, yields
\begin{equation*}
0=\sum_{k=1}^3\left[\int^1_0\left|w_k''\left(s\right)\right|^2ds
+(\gamma-\eta^2)\int^1_0 \left|w_k'\left(s\right)\right|^2ds +\alpha \left|w_k'\left(0\right)\right|^2\right]=\left\|w\right\|^2_{\bm{H}_*^2\left(\mathbf{G}\right)}.
\end{equation*}
The fact that $v_k=0$, $k=1,2,3$, implies that $\left\|v\right\|_{\bm{L}_2\left(\mathbf{G}\right)}=0$ and hence that $\left\|x\right\|_{\mathscr{X}}=0$, or equivalently, that $x=0$. It follows from this that $\operatorname{Ker}\mathcal{T}=0$ and so $\mathcal{T}$ is injective. Thus, $0$ is not an eigenvalue.

For the surjectivity, we proceed as follows. Let $\widetilde{x}\in \mathscr{X}$, $x\in \mathscr{D}\left(\mathcal{A}\right)$ (arbitrary) and consider the equation
\begin{equation}\label{eq12xa34}
\mathcal{T}x=\widetilde{x},
\end{equation}
or equivalently in coordinates,
\begin{equation}\label{213}
\left\{\begin{aligned}
&v_k=\widetilde{w}_k,\quad -w^{\left(4\right)}_k+(\gamma-\eta^2)\, w''_k-2\beta \eta v_{k}'=\widetilde{v}_k,\quad k=1,2,3,\\
&w_k\left(1\right)=w''_k\left(1\right)=0,\quad k=1,2,3,\\
&w_j\left(0\right)=w_k\left(0\right),\quad j,k=1,2,3,\\
&w''_k\left(0\right)-\alpha w'_k\left(0\right)-\kappa v'_k\left(0\right)=0,\quad k=1,2,3,\\
&\sum^3_{k=1}\,\bigl[w^{\left(3\right)}_k\left(0\right)-(\gamma-\eta^2)\, w_{k}'\left(0\right)+\beta\eta v_k\left(0\right)\bigr]=0.
\end{aligned}\right.
\end{equation}
Write the differential equations in \eqref{213} in weak form as
\begin{equation*}
\int_0^1\biggl[-w^{\left(4\right)}_k\left(s\right)+(\gamma-\eta^2)\, w''_k\left(s\right)-2\beta \eta v_{k}'\left(s\right)\biggr]\,\overline{\phi_k\left(s\right)}\,ds=\int_0^1\widetilde{v}_k\left(s\right)\overline{\phi_k\left(s\right)}\,ds,\quad k=1,2,3,
\end{equation*}
for the $\phi_k$ in an appropriate class of test functions satisfying the conditions $\phi_k\left(1\right)=0$, $k=1,2,3$, and $\phi_j\left(0\right)=\phi_k\left(0\right)$, $j,k=1,2,3$. Again,
\begin{align*}
&\sum_{k=1}^{3}\int_{0}^{1}\left(\widetilde{v}_k\left(s\right)+2\beta\eta\widetilde{w}_k'\left(s\right)\right)\overline{\phi_k\left(s\right)}\,ds+\beta\eta\sum_{k=1}^{3} \widetilde{w}_k\left(0\right)\overline{\phi_k\left(0\right)}+\kappa\sum_{k=1}^{3} \widetilde{w}_k'\left(0\right)\overline{\phi_k'\left(0\right)}\\
&\hspace{0.1\linewidth}=-\sum_{k=1}^{3}\left[\int_{0}^{1}w_k''\left(s\right)\overline{\phi_k''\left(s\right)}\,ds+(\gamma-\eta^2)\int_{0}^{1}w_k'\left(s\right)\overline{\phi_k'\left(s\right)}\,ds+\alpha w_k'\left(0\right)\overline{\phi_k'\left(0\right)}\right]
\end{align*}
employing integration by parts and using the vertex conditions, and where we have taken into account that $v_k=\widetilde{w}_k$, $k=1,2,3$. We define the bilinear form $\left<\,\cdot\,,\,\cdot\,\right>$ on $\bm{H}_*^2\left(\mathbf{G}\right)$ by
\begin{equation*}
\left<u,\widetilde{u}\right>\coloneqq\sum_{k=1}^{3}\left[\int_{0}^{1}u_k''\left(s\right)\overline{\widetilde{u}_{k}''\left(s\right)}\,ds+(\gamma-\eta^2)\int_{0}^{1}u_k'\left(s\right)\overline{\widetilde{u}_{k}'\left(s\right)}\,ds+\alpha u_k'\left(0\right)\overline{\widetilde{u}_{k}'\left(0\right)}\right]
\end{equation*}
for any $u,\widetilde{u}\in \bm{H}_*^2\left(\mathbf{G}\right)$. Clearly from Schwarz's inequality,
\begin{equation*}
\left|\left<u,\widetilde{u}\right>\right|\leq\left\|u\right\|_{\bm{H}_*^2\left(\mathbf{G}\right)}\left\|\widetilde{u}\right\|_{\bm{H}_*^2\left(\mathbf{G}\right)},\quad u,\widetilde{u}\in \bm{H}_*^2\left(\mathbf{G}\right).
\end{equation*}
Moreover, $\left<u,u\right>=\left\|u\right\|^2_{\bm{H}_*^2\left(\mathbf{G}\right)}$, and so $\left<\,\cdot\,,\,\cdot\,\right>$ is coercive. Now define
\begin{equation*}
F\left(\phi\right)\coloneqq\sum_{k=1}^{3}\int_{0}^{1}\left(\widetilde{v}_k\left(s\right)+2\beta\eta\widetilde{w}_k'\left(s\right)\right)\overline{\phi_k\left(s\right)}\,ds+\beta\eta\sum_{k=1}^{3} \widetilde{w}_k\left(0\right)\overline{\phi_k\left(0\right)}+\kappa\sum_{k=1}^{3} \widetilde{w}_k'\left(0\right)\overline{\phi_k'\left(0\right)},
\end{equation*}
a bounded conjugate linear functional of $\phi$ in $\bm{H}_*^2\left(\mathbf{G}\right)$ for fixed $\widetilde{x}=\left\{\left(\widetilde{w}_k,\widetilde{v}_k\right)\right\}^3_{k=1}$. Then there exists a unique $w=\left\{w_k\right\}^3_{k=1}\in \bm{H}_*^2\left(\mathbf{G}\right)$ such that $\left<w,\phi\right>+F\left(\phi\right)=0$ (by the Lax--Milgram theorem) and therefore that $w_k=w_k\left(s\right)$, $k=1,2,3$, satisfies the differential equations in \eqref{213}. Since $x=\left\{\left(w_k,v_k\right)\right\}^3_{k=1}=\left\{\left(w_k,\widetilde{w}_k\right)\right\}^3_{k=1}$, we have $\left\{\left(w_k,\widetilde{w}_k\right)\right\}^3_{k=1}\in \mathscr{D}\left(\mathcal{A}\right)$ and \eqref{eq12xa34} is satisfied for any given $\widetilde{x}=\left\{\left(\widetilde{w}_k,\widetilde{v}_k\right)\right\}^3_{k=1}\in \mathscr{X}$. This proves surjectivity, and hence bijectivity of $\mathcal{T}$. Anticipating the result of statement \ref{T-3-2-c} that $\mathcal{T}$ is maximal dissipative (and hence closed) we have by the closed graph theorem that the inverse $\mathcal{T}^{-1}$ of $\mathcal{T}$ is closed and bounded, so it remains only to show that $\mathcal{T}^{-1}$ is compact. But this is clear, for $\mathscr{D}\left(\mathcal{A}\right)\subset (\bm{H}^4\left(\mathbf{G}\right)\cap \bm{H}_*^2\left(\mathbf{G}\right))\times \bm{H}_*^2\left(\mathbf{G}\right)$ and the Sobolev embeddings $ \bm{H}^4\left(\mathbf{G}\right)\cap \bm{H}_*^2\left(\mathbf{G}\right)\xhookrightarrow{} \bm{H}_*^2\left(\mathbf{G}\right) \xhookrightarrow{} \bm{L}_2\left(\mathbf{G}\right)$ are compact.

In order to establish statement \ref{T-3-2-c}, we compute for any $x\in\mathscr{D}\left(\mathcal{A}\right)$,
\begin{equation}\label{214}
2\operatorname{Re}\left(\mathcal{T}x,x\right)_{\mathscr{X}}=\left(\mathcal{T}x,x\right)_{\mathscr{X}}+\left(x,\mathcal{T}x\right)_{\mathscr{X}} =-2\kappa \sum_{k=1}^3\left|v_k'\left(0\right)\right|^2,
\end{equation}
after the usual integrations by parts and using the vertex conditions. So $\mathcal{T}$ is dissipative for $\kappa>0$. We have, in fact, shown that $\mathcal{T}$ is maximal dissipative. That is, since $0\in\varrho\left(\mathcal{T}\right)$ and the resolvent set is open, we may find $\lambda>0$ in $\varrho\left(\mathcal{T}\right)$ such that the range $\operatorname{Im}\left(\lambda I-\mathcal{T}\right)=\mathbb{X}$; hence the maximality of $\mathcal{T}$. It is skewsymmetric when $\kappa=0$ for then we have $\left(\mathcal{T}x,x\right)_{\mathscr{X}}+\left(x,\mathcal{T}x\right)_{\mathscr{X}} =0$ from \eqref{214}. The resulting skewadjointness of $\mathcal{T}$ for $\kappa=0$ can be obtained for example by the same standard arguments as in the proof of \cite[Lemma 3.2]{MahinzaeimEtAl2021}.

Finally, statements \ref{T-3-2-b} and \ref{T-3-2-c} imply statement \ref{T-3-2-a}.
\end{proof}

\begin{proof}[Proof of Theorem \ref{T-2-1}]
Since, by Lemma \ref{L-2-1}, $\mathcal{T}$ is a closed, densely defined, maximal dissipative operator in $\mathscr{X}$, the proof follows immediately from the Lumer--Phillips theorem, see \cite[Theorem 1.4.3]{Pazy1983}.
\end{proof}

\section{Spectral analysis}\label{sec_03}

We return to the spectral problem \eqref{eq_01xaa02} and analyse the spectrum of the system operator $\mathcal{T}$ in detail. The reader is referred to \cite[Definitions 4.1--4.3]{MahinzaeimEtAl2021} or to standard textbooks for the standard definitions from functional analysis of the spectral theory of linear operators in Hilbert space.

It follows from Lemma \ref{L-2-1} that $\mathcal{T}$ has compact resolvent and $0\in\varrho\left(\mathcal{T}\right)$. It is known then (see, e.g., \cite[Corollary XI.8.4]{GohbergEtAl1990}) that the spectrum $\sigma\left(\mathcal{T}\right)$ is a purely discrete set, consisting only of isolated eigenvalues of finite algebraic multiplicity (normal eigenvalues) which accumulate only at infinity. Moreover, because $\mathcal{T}$ is maximal dissipative, for any $\lambda\in\sigma\left(\mathcal{T}\right)$ we have $\operatorname{Re}\left(\lambda\right)\leq 0$. Further information about the location of the eigenvalues are obtained in the next result.
\begin{theorem}\label{T-3-1}
The spectrum of $\mathcal{T}\coloneqq \mathcal{A}+\mathcal{B}$ as defined by \eqref{204}--\eqref{205} is symmetric with respect to the real axis of the complex plane, the eigenvalues of $\mathcal{T}$ being confined to the open left half-plane when $\kappa>0$.
\end{theorem}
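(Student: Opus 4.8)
The plan is to establish the two assertions in turn. The symmetry of $\sigma(\mathcal{T})$ about the real axis is immediate: by Lemma \ref{L-2-1} the resolvent of $\mathcal{T}$ is compact, so $\sigma(\mathcal{T})$ consists only of eigenvalues, and all the coefficients of $\mathcal{A}$ and $\mathcal{B}$, as well as all the vertex conditions defining $\mathscr{D}(\mathcal{A})$, are real. Hence, taking complex conjugates in the spectral problem \eqref{eq_01xaa02} shows that $x=\{(w_k,v_k)\}_{k=1}^3$ solves it for $\lambda$ if and only if $\overline{x}=\{(\overline{w_k},\overline{v_k})\}_{k=1}^3\in\mathscr{D}(\mathcal{A})$ solves it for $\overline{\lambda}$; consequently $\lambda\in\sigma(\mathcal{T})\Leftrightarrow\overline{\lambda}\in\sigma(\mathcal{T})$, with eigenspaces carried onto eigenspaces.

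For the location of the eigenvalues when $\kappa>0$, note first that the maximal dissipativity of $\mathcal{T}$ and $0\in\varrho(\mathcal{T})$ (Lemma \ref{L-2-1}) already give $\operatorname{Re}(\lambda)\le 0$ and $\lambda\ne 0$ for every $\lambda\in\sigma(\mathcal{T})$, so the task reduces to ruling out eigenvalues on $i\mathbb{R}\setminus\{0\}$. Suppose, for contradiction, that $\lambda=i\omega$ with $\omega\in\mathbb{R}\setminus\{0\}$ is an eigenvalue with eigenvector $x=\{(w_k,v_k)\}_{k=1}^3\ne 0$, so that $v_k=i\omega w_k$. Since $(\mathcal{T}x,x)_{\mathscr{X}}=i\omega\|x\|_{\mathscr{X}}^2$ is purely imaginary, the dissipation identity \eqref{214} forces $\kappa\sum_{k=1}^3|v_k'(0)|^2=0$, hence $v_k'(0)=0$, hence $w_k'(0)=0$ (as $\omega\ne 0$), and then $w_k''(0)=0$ from the vertex condition $w_k''(0)-\alpha w_k'(0)-\kappa v_k'(0)=0$. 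Each $w_k$ thus solves the constant-coefficient (in $s$) equation $w_k^{(4)}-(\gamma-\eta^2)w_k''+2\beta\eta i\omega w_k'-\omega^2 w_k=0$ subject to $w_k(1)=w_k''(1)=w_k'(0)=w_k''(0)=0$. A short integration-by-parts computation shows that the imaginary part of the identity obtained by multiplying this equation by $\overline{w_k''}$ and integrating over $(0,1)$ equals $\beta\eta\omega|w_k'(1)|^2$, so $w_k'(1)=0$ provided $\eta>0$; multiplying instead by $\overline{w_k'}$, the real part of the resulting identity equals $\tfrac12\omega^2|w_k(0)|^2$, so $w_k(0)=0$.

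Thus each $w_k$ is a solution of the fourth-order equation vanishing to third order both at $s=0$ and at $s=1$. Writing $w_k=\tau_k\phi$ with $\phi$ the fundamental solution determined by $\phi(0)=\phi'(0)=\phi''(0)=0$ and $\phi^{(3)}(0)=1$, the conditions at $s=1$ read $\tau_k\phi(1)=\tau_k\phi'(1)=\tau_k\phi''(1)=0$. If some $\tau_k\ne 0$ then $\phi^{(j)}(1)=0$ for $j=0,1,2$ while $q\coloneqq\phi^{(3)}(1)\ne 0$ (otherwise $\phi\equiv 0$), so by uniqueness $\phi(s+1)=q\,\phi(s)$ for all $s$. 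Expanding $\phi$ in the exponential modes belonging to the characteristic roots $\mu_j$ of $P(\mu)=\mu^4-(\gamma-\eta^2)\mu^2+2\beta\eta i\omega\mu-\omega^2$, this relation forces $e^{\mu_j}=q$ for each mode present and, by comparison of polynomial prefactors, that no mode carries a nontrivial polynomial prefactor; since a combination of at most three distinct exponential modes cannot vanish to third order at a point (invertibility of a $3\times 3$ Vandermonde determinant), all four roots must be simple, present, and satisfy $e^{\mu_j}=q$, so their real parts coincide. As $P$ has no $\mu^3$ term, $\sum_j\mu_j=0$, whence $\operatorname{Re}(\mu_j)=0$ for all $j$, i.e.\ $\mu_j=i\nu_j$ with $\nu_j\in\mathbb{R}$. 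But then Vieta's formulas give $\sum_j\nu_j=0$ and $\sum_{j<l}\nu_j\nu_l=\gamma-\eta^2$, so $\sum_j\nu_j^2=-2(\gamma-\eta^2)<0$, contradicting $\gamma>\eta^2$. Hence all $\tau_k=0$, so $w_k\equiv 0$ and $x=0$, the desired contradiction. (When $\eta=0$ the gyroscopic term is absent and the coefficients are real, and one argues as in \cite{MahinzaeimEtAl2021}.)

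The step I expect to be the main obstacle is precisely this exclusion of eigenvalues from the imaginary axis: the dissipation identity \eqref{214} yields only $v_k'(0)=0$ directly, and the real work is to bootstrap this—through the chain of energy identities produced by the multipliers $\overline{w_k''}$ and $\overline{w_k'}$—into enough vanishing endpoint data on each edge to reduce the spectral problem to a homogeneous, over-determined boundary-value problem on each edge, and then to close the argument by the characteristic-root computation, in which the standing hypothesis $\gamma>\eta^2$ enters essentially. Care is also needed at the degenerate frequencies where $P$ has a repeated root (a discrete set of $\omega$), but the shift relation $\phi(s+1)=q\,\phi(s)$ removes the polynomial prefactors, so $\phi$ is then a combination of at most three distinct exponential modes and the Vandermonde argument already gives a contradiction there.
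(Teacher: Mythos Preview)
Your argument is correct and follows a genuinely different route from the paper's. Both proofs begin identically: symmetry from conjugation, $\operatorname{Re}(\lambda)\le 0$ and $\lambda\neq 0$ from dissipativity and $0\in\varrho(\mathcal{T})$, and then $w_k'(0)=w_k''(0)=0$ on each edge from the dissipation identity \eqref{214}. From that point the two diverge. The paper passes to the single-edge problem \eqref{eq_1s6}, writes $w_k=c_k\varphi$ for a nontrivial solution $\varphi$, and uses the \emph{network} coupling (continuity plus force balance) to force $c_k\equiv c$ and then $c=0$, appealing to the claim $\varphi^{(3)}(0)+i\beta\eta\mu\,\varphi(0)\neq 0$. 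You instead work edge by edge with two multiplier identities: testing the equation against $\overline{w_k''}$ gives $\beta\eta\omega\,|w_k'(1)|^2=0$, hence $w_k'(1)=0$ when $\eta>0$; then testing against $\overline{w_k'}$ gives $\tfrac12\omega^2|w_k(0)|^2=0$, hence $w_k(0)=0$. This produces six homogeneous endpoint conditions on each edge, and the translation relation $\phi(s+1)=q\,\phi(s)$ together with the Vandermonde/Vieta computation on $P(\mu)=\mu^4-(\gamma-\eta^2)\mu^2+2i\beta\eta\omega\mu-\omega^2$ forces all characteristic roots onto $i\mathbb{R}$ and hence $\sum_j\nu_j^2=-2(\gamma-\eta^2)<0$, a contradiction. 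What your approach buys is a fully self-contained argument (for $\eta>0$) in which the standing hypothesis $\gamma>\eta^2$ enters visibly and the inner-vertex coupling is never used after the dissipation step; the paper's approach is shorter and treats $\eta\ge 0$ uniformly, but leans on the network structure and on the unproven nondegeneracy assertion about $\varphi$.
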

\begin{proof}
Since $\mathcal{T}$ is a real operator we obtain the first assertion about symmetry of the spectrum. Indeed, conjugation of \eqref{eq_01xaa02} shows that $\overline{x}$ satisfies its conjugate spectral problem and is the eigenvector of $\mathcal{T}$ which corresponds to the eigenvalue $\overline{\lambda}$. To prove the second assertion, we take the inner product of \eqref{eq_01xaa02} with the corresponding $x$ to obtain for the real part of the resulting expression, taking into account \eqref{214},
\begin{equation*}
\operatorname{Re}\left(\lambda\right)=\frac{\operatorname{Re}\left(\mathcal{T}x ,x \right)_{\mathscr{X}} }{\left\|x \right\|_{\mathscr{X}}^2}\leq 0.
\end{equation*}
We must show that if $\kappa>0$ then $\operatorname{Re}\left(\lambda\right)<0$. Let $\lambda$ be an eigenvalue with $\operatorname{Re}\left(\lambda\right)=0$ and let $x= \left\{\left(w_k, v_k\right)\right\}_{k=1}^3$ be the corresponding eigenvector. Then, because $v_k = \lambda w_k$, $k=1,2,3$, replacing the $v_k'\left(0\right)$ in \eqref{214} by $\lambda w_k'\left(0\right)$ we arrive at
\begin{equation*}
\operatorname{Re}\left(\mathcal{T}x,x\right)_{\mathscr{X}}= -\kappa\left|\lambda\right|^2 \sum_{k=1}^3\left|w_k'\left(0\right)\right|^2=0
\end{equation*}
and
\begin{equation*}
\sum_{k=1}^3\left|w_k'\left(0\right)\right|^2=0
\end{equation*}
since $\kappa>0$, $\lambda\ne 0$. So $w_k'\left(0\right)=0$, $k=1,2,3$, and the $w_k=w_k\left(\lambda,s\right)$ satisfy the boundary-eigenvalue problem
\begin{equation}\label{302}
\left\{\begin{aligned}
&w^{\left(4\right)}_k-(\gamma-\eta^2)\, w''_k+2\lambda\beta\eta w'_k=-\lambda^2 w_k,\quad k=1,2,3,\\
&w_k\left(1\right)=w''_k\left(1\right)=0,\quad k=1,2,3,\\
&w_j\left(0\right)=w_k\left(0\right),\quad j,k=1,2,3,\\
&w'_k\left(0\right)=w''_k\left(0\right)=0,\quad k=1,2,3,\\
&\sum^3_{k=1}\,\bigl(w^{\left(3\right)}_k\left(0\right)+\lambda\beta\eta w_k\left(0\right)\bigr)=0.
\end{aligned}\right.
\end{equation}
Setting $\lambda=i\mu$, $\mu\in \mathbb{R}$, consider the boundary-eigenvalue problem
\begin{equation}\label{eq_1s6}
\left\{\begin{aligned}
&\varphi^{\left(4\right)}-(\gamma-\eta^2)\, \varphi''+2i\beta\eta\mu \varphi'=\mu^2 \varphi,\\
&\varphi\left(1\right)=\varphi''\left(1\right)=0,\\
&\varphi'\left(0\right)=\varphi''\left(0\right)=0.
\end{aligned}\right.
\end{equation}
Let $\varphi=\varphi\left(\lambda,s\right)$ be a nontrivial solution of \eqref{eq_1s6}. Then solutions of \eqref{302} are of the form $w_k\left(\lambda,s\right)=c_k\varphi\left(\lambda,s\right)$, $k=1,2,3$, arbitrary constants $c_k$. The continuity condition $w_j\left(0\right)=w_k\left(0\right)$, $j,k=1,2,3$, together with $\sum^3_{k=1}\,\bigl(w^{\left(3\right)}_k\left(0\right)+i\beta\eta\mu w_k\left(0\right)\bigr)=0$ implies that $c_j=c_k\equiv c$, $j,k=1,2,3$, and hence that
\begin{equation*}
3c\,\bigl(\varphi^{\left(3\right)}\left(0\right)+i\beta\eta\mu \varphi\left(0\right)\bigr)=0.
\end{equation*}
Using $\varphi^{\left(3\right)}\left(0\right)+i\beta\eta\mu \varphi\left(0\right)\ne 0$ gives $c=0$ and hence a contradiction. Thus $\mathcal{T}$ has no eigenvalues on the imaginary axis, $\operatorname{Re}\left(\lambda\right)<0$ if $\kappa>0$.
\end{proof}

\subsection{Eigenvalues and eigenvectors}

We have shown that studying the spectrum of $\mathcal{T}$ reduces to studying its discrete spectrum, consisting only of normal eigenvalues. To pursue this further, we will show in the next theorem how to determine these eigenvalues and the corresponding eigenvectors.
\begin{theorem}\label{T-3-2}
Let $\varphi=\varphi\left(\lambda,s\right)$ be a nonzero solution of the differential equation
\begin{equation}\label{303}
\varphi^{\left(4\right)} -(\gamma-\eta^2)\,\varphi''+2\lambda\beta\eta \varphi' =-\lambda^2 \varphi
\end{equation}
for $\lambda\in\mathbb{C}$ satisfying the boundary conditions
\begin{align}
\varphi \left( 1\right) = \varphi''\left(1\right) &= 0,\label{303a} \\
\varphi''\left(0\right)-\left(\alpha+\lambda\kappa\right) \varphi'\left(0\right)&=0.\label{303b}
\end{align}
Define by
\begin{equation*}
D_1\left(\lambda\right)\coloneqq\varphi^{\left(3\right)}\left(\lambda,0\right)-(\gamma-\eta^2)\,\varphi'\left(\lambda,0\right)+\lambda\beta\eta\varphi\left(\lambda,0\right),\quad D_2\left(\lambda\right)\coloneqq\varphi\left(\lambda,0\right)
\end{equation*}
the corresponding characteristic functions. The eigenvalues of $\mathcal{T}$ are the roots of $D_1\left(\lambda\right)=0$ and $D_2\left(\lambda\right)=0$,
\begin{equation*}
\sigma\left(\mathcal{T}\right)=\left\{\lambda\in \mathbb{C}~\middle|
~ D_1\left(\lambda\right)=0\right\}\cup\left\{\lambda\in \mathbb{C}~\middle|
~D_2\left(\lambda\right)=0\right\}.
\end{equation*}
If $D_1\left(\lambda\right)=0$, $D_2\left(\lambda\right)\ne 0$, then an eigenvector $x=x\left(\lambda\right)$ corresponding to the eigenvalue $\lambda$ is given by
\begin{equation}\label{306}
x\left(\lambda\right)=\bigl\{\left(\varphi\left(\lambda,\,\cdot\,\right),\lambda\varphi\left(\lambda,\,\cdot\,\right)\right);\left(\varphi\left(\lambda,\,\cdot\,\right),\lambda\varphi\left(\lambda,\,\cdot\,\right)\right); \left(\varphi\left(\lambda,\,\cdot\,\right),\lambda\varphi\left(\lambda,\,\cdot\,\right)\right)\bigr\}.
\end{equation}
If $D_1\left(\lambda\right)\ne 0$, $D_2\left(\lambda\right)=0$, then there are two linearly independent eigenvectors $x_1=x_1\left(\lambda\right)$, $x_2=x_2\left(\lambda\right)$ for the eigenvalue $\lambda$ (i.e., $\lambda$ has geometric multiplicity two) given by
\begin{align}
x_1\left(\lambda\right)&=\bigl\{\left(\varphi\left(\lambda,\,\cdot\,\right),\lambda\varphi\left(\lambda,\,\cdot\,\right)\right);-\frac{1}{2}\left(\varphi\left(\lambda,\,\cdot\,\right),\lambda\varphi\left(\lambda,\,\cdot\,\right)\right); -\frac{1}{2}\left(\varphi\left(\lambda,\,\cdot\,\right),\lambda\varphi\left(\lambda,\,\cdot\,\right)\right)\bigr\},\label{306dds}\\
x_2\left(\lambda\right)&=\bigl\{\left(0,0\right); \left(\varphi\left(\lambda,\,\cdot\,\right),\lambda\varphi\left(\lambda,\,\cdot\,\right)\right);-\left(\varphi\left(\lambda,\,\cdot\,\right),\lambda\varphi\left(\lambda,\,\cdot\,\right)\right)\bigr\}, \label{306ddsss}
\end{align}
respectively.
\end{theorem}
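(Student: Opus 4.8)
The strategy is to unfold the abstract eigenvalue equation $\mathcal{T}x=\lambda x$ into the coupled boundary-value problem it encodes on the three edges, to exploit the fact that the problem \emph{restricted to a single edge} has a one-dimensional solution space, and then to read the two remaining junction conditions off in terms of the scalars $D_1\left(\lambda\right)$ and $D_2\left(\lambda\right)$. By Lemma \ref{L-2-1}, $\mathcal{T}$ has compact resolvent and $0\in\varrho\left(\mathcal{T}\right)$, so $\sigma\left(\mathcal{T}\right)$ consists solely of eigenvalues, and it suffices, for each fixed $\lambda$ (the value $\lambda=0$ being already excluded, with $D_1\left(0\right)\ne 0$ and $D_2\left(0\right)\ne 0$ then following a posteriori), to describe the nontrivial solutions $x=\left\{\left(w_k,v_k\right)\right\}_{k=1}^3\in\mathscr{D}\left(\mathcal{A}\right)$ of \eqref{eq_01xaa02}. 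Writing $\mathcal{T}x=\lambda x$ out componentwise via \eqref{203}--\eqref{205} gives first $v_k=\lambda w_k$, and then, on substituting this back, exactly the differential equation \eqref{303} for each $w_k$; the constraints in \eqref{204} defining $\mathscr{D}\left(\mathcal{A}\right)$ likewise reduce, using $v_k=\lambda w_k$, to the edge conditions \eqref{303a}, \eqref{303b} on each $w_k$ together with the two junction conditions $w_j\left(0\right)=w_k\left(0\right)$ ($j,k=1,2,3$) and $\sum_{k=1}^{3}\bigl[w_k^{\left(3\right)}\left(0\right)-(\gamma-\eta^2)\,w_k'\left(0\right)+\lambda\beta\eta w_k\left(0\right)\bigr]=0$.

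Since \eqref{303}, \eqref{303a}, \eqref{303b} is a fourth-order equation subject to three edge conditions, its solution space is one-dimensional, spanned by $\varphi\left(\lambda,\,\cdot\,\right)$; hence $w_k=c_k\varphi\left(\lambda,\,\cdot\,\right)$ for scalars $c_k$, with $x\ne 0$ precisely when $\left(c_1,c_2,c_3\right)\ne 0$. Substituting into the two junction conditions collapses them to $\left(c_j-c_k\right)D_2\left(\lambda\right)=0$ for all $j,k$ and $\left(c_1+c_2+c_3\right)D_1\left(\lambda\right)=0$, and the case analysis is now mechanical. If $D_2\left(\lambda\right)\ne 0$ the first batch forces $c_1=c_2=c_3=:c$, which must be nonzero, and the second forces $D_1\left(\lambda\right)=0$; taking $c=1$ gives the eigenvector \eqref{306}, unique up to scalars. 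If $D_2\left(\lambda\right)=0$ while $D_1\left(\lambda\right)\ne 0$, the continuity conditions hold automatically and one is left only with $c_1+c_2+c_3=0$, a two-dimensional condition whose independent solutions $\left(1,-\tfrac12,-\tfrac12\right)$ and $\left(0,1,-1\right)$ yield precisely the linearly independent eigenvectors \eqref{306dds} and \eqref{306ddsss}. Conversely, the vectors in \eqref{306}, \eqref{306dds}, \eqref{306ddsss} are readily checked to belong to $\mathscr{D}\left(\mathcal{A}\right)$ and to solve \eqref{eq_01xaa02}: $\varphi\left(\lambda,\,\cdot\,\right)$ is smooth and vanishes at $s=1$, the second-order conditions at $s=0$ and $s=1$ are just \eqref{303a}, \eqref{303b}, continuity is automatic or holds because $D_2\left(\lambda\right)=0$, and the force-balance sum equals $\left(c_1+c_2+c_3\right)D_1\left(\lambda\right)=0$ in each of the two cases. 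Putting the two cases together gives both inclusions, so $\sigma\left(\mathcal{T}\right)$ is exactly the union of the zero sets of $D_1$ and $D_2$.

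The step that needs the most care is the one-dimensionality of the edge solution space of \eqref{303}, \eqref{303a}, \eqref{303b}. A nonzero $\varphi\left(\lambda,\,\cdot\,\right)$ always exists, since three homogeneous conditions imposed on the four-dimensional solution space of \eqref{303} cannot exhaust it; what must be argued is that they do not fail to be independent, i.e.\ that the associated $3\times 4$ boundary matrix has full rank $3$ for every $\lambda$, which is verified by writing \eqref{303} in a concrete fundamental system and inspecting the relevant minors. A secondary and more benign point is the degenerate possibility $D_1\left(\lambda\right)=D_2\left(\lambda\right)=0$, which is not covered by the eigenvector formulas \eqref{306}--\eqref{306ddsss} as stated; it does not affect the set equality for $\sigma\left(\mathcal{T}\right)$, and is disposed of separately, or eliminated once it is shown (as will be needed anyway for the multiplicity count later) that $D_1$ and $D_2$ have no common zero.
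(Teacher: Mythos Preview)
Your proof is correct and follows essentially the same route as the paper: unpack $\mathcal{T}x=\lambda x$ into the edge boundary-value problem \eqref{308}, use that each $w_k$ is a scalar multiple of $\varphi$, and then read off the two junction conditions as $(c_j-c_k)D_2(\lambda)=0$ and $(\sum_k c_k)D_1(\lambda)=0$, splitting into the two cases accordingly. You are, if anything, more careful than the paper in flagging the two implicit assumptions---one-dimensionality of the edge solution space and the mutual exclusivity of $D_1(\lambda)=0$, $D_2(\lambda)=0$---which the paper handles only by assertion (``we observe that $w_k=c_k\varphi$'') and a brief remark (``It is easy to see that any solution of \eqref{303}--\eqref{303b}, \eqref{308z}, \eqref{308zz} must be the zero solution'').
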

\begin{proof}
Let us first note from \eqref{eq_01xaa02} that if $\lambda\in\mathbb{C}$ is an eigenvalue of $\mathcal{T}$ with corresponding eigenvector $x$, then the $w_k=w_k\left(\lambda,s\right)$ are nonzero solutions of the boundary-eigenvalue problem
\begin{equation}\label{308}
\left\{\begin{aligned}
&w^{\left(4\right)}_k-(\gamma-\eta^2)\, w''_k+2\lambda\beta\eta w'_k=-\lambda^2 w_k,\quad k=1,2,3,\\
&w_k\left(1\right)=w''_k\left(1\right)=0,\quad k=1,2,3,\\
&w_j\left(0\right)=w_k\left(0\right),\quad j,k=1,2,3,\\
&w''_k\left(0\right)-\left(\alpha+\lambda\kappa\right) w'_k\left(0\right)=0,\quad k=1,2,3,\\
&\sum^3_{k=1}\,\bigl[w^{\left(3\right)}_k\left(0\right)-(\gamma-\eta^2)\,w'_k\left(0\right)+\lambda\beta\eta w_k\left(0\right)\bigr]=0.
\end{aligned}\right.
\end{equation}
Comparing \eqref{303}--\eqref{303b} with \eqref{308} we observe that $w_k\left(\lambda,s\right)=c_k\varphi\left(\lambda,s\right)$, $k=1,2,3$. Substitution in $w_j\left(0\right)=w_k\left(0\right)$, $j,k=1,2,3$, and $\sum^3_{k=1}\,\bigl[w^{\left(3\right)}_k\left(0\right)-(\gamma-\eta^2)\,w'_k\left(0\right)+\lambda\beta\eta w_k\left(0\right)\bigr]=0$ gives $c_j\varphi\left(\lambda,0\right)=c_k\varphi\left(\lambda,0\right)$, $j,k=1,2,3$, and
\begin{equation}
\sum^3_{k=1}c_k\,\bigl[\varphi^{\left(3\right)}\left(\lambda,0\right)-(\gamma-\eta^2)\,\varphi'\left(\lambda,0\right)+\lambda\beta\eta \varphi\left(\lambda,0\right)\bigr]=0. \label{310b}
\end{equation}
Two cases arise.
\begin{enumerate}[label=,leftmargin=*,align=left,labelwidth=\parindent,labelsep=0pt]
\item\label{item04y}\textbf{Case 1.} $D_2\left(\lambda\right)\ne 0$. Here $c_j=c_k\equiv c\ne 0$, $j,k=1,2,3$, since $\varphi\left(\lambda,0\right)\ne 0$, and from \eqref{310b} we have therefore
\begin{equation}\label{308v}
\varphi^{\left(3\right)}\left(\lambda,0\right)-(\gamma-\eta^2)\,\varphi'\left(\lambda,0\right)+\lambda\beta\eta \varphi\left(\lambda,0\right)=0.
\end{equation}
Then $\lambda$ is an eigenvalue if and only if the boundary-eigenvalue problem \eqref{303}--\eqref{303b}, \eqref{308v} has a nonzero solution, in which case $\lambda$ is a zero of $D_1$ and \eqref{306} follows since
\begin{equation}\label{308aaazz}
x\left(\lambda\right)=\left\{c_k\left(\varphi\left(\lambda,\,\cdot\,\right),\lambda \varphi\left(\lambda,\,\cdot\,\right)\right)\right\}^3_{k=1}
\end{equation}
with the $c_k\equiv c\ne 0$.
\item\label{item04c}\textbf{Case 2.} $D_2\left(\lambda\right)= 0$. In this case we consider \eqref{303}--\eqref{303b} subject to the additional boundary condition
\begin{equation}\label{308z}
\varphi\left(\lambda,0\right)=0.
\end{equation}
Therefore, $\lambda$ is an eigenvalue if and only if the boundary-eigenvalue problem \eqref{303}--\eqref{303b}, \eqref{308z} admits a nonzero solution. Using \eqref{308z} in \eqref{310b}, we obtain
\begin{equation*}
\sum^3_{k=1}c_k\,\bigl[\varphi^{\left(3\right)}\left(\lambda,0\right)-(\gamma-\eta^2)\,\varphi'\left(\lambda,0\right)\bigr]=\bigl[\varphi^{\left(3\right)}\left(\lambda,0\right)-(\gamma-\eta^2)\,\varphi'\left(\lambda,0\right)\bigr]\,\sum^3_{k=1}c_k=0.
\end{equation*}
Let
\begin{equation}\label{308zz}
\varphi^{\left(3\right)}\left(\lambda,0\right)-(\gamma-\eta^2)\,\varphi'\left(\lambda,0\right)=0.
\end{equation}
\end{enumerate}
It is easy to see that any solution of \eqref{303}--\eqref{303b}, \eqref{308z}, \eqref{308zz} must be the zero solution. Consequently, $\sum^3_{k=1}c_k=0$ and $\varphi^{\left(3\right)}\left(\lambda,0\right)-(\gamma-\eta^2)\,\varphi'\left(\lambda,0\right)\ne 0$, and it follows that associated with $\lambda$ there exist two linearly independent eigenvectors $x_1\left(\lambda\right)$ and $x_2\left(\lambda\right)$ given by \eqref{306dds} and \eqref{306ddsss}, respectively. The theorem is proven.
\end{proof}

\subsection{Asymptotics of eigenvalues}

Our aim in this subsection is to discuss eigenvalue asymptotics. The approach we take here is based on the ``asymptotic spectral problem'' given in \cite{MahinzaeimEtAl2022} for the single pipe case. By Theorem \ref{T-3-1}, we need only consider eigenvalues in the left half-plane $\operatorname{Re}\left(\lambda\right)\leq 0$. Moreover, those eigenvalues with nonzero imaginary part occur in conjugate pairs $\lambda$, $\overline{\lambda}$, so that we may restrict attention to $\frac{\pi}{2}\le \arg\left(\lambda\right)\le \pi$. As usual, we use the standard spectral parameter transformation $\lambda\mapsto i\rho^2$ and take $0\le \arg\left(\rho\right)\le \frac{\pi}{4}$. Define the sector ${S}$ in the complex plane by
\begin{equation*}
{S}\coloneqq\left\{\rho\in\mathbb{C}~\middle|
~0\leq \arg\left(\rho\right) \leq \frac{\pi}{4}\right\}.
\end{equation*}
For ${S}$ the four roots of $-1$ can be ordered so that
\begin{equation*}
\operatorname{Re}\left(-\rho\right)\leq \operatorname{Re}\left(i\rho\right)\leq \operatorname{Re}\left(-i\rho\right) \leq \operatorname{Re}\left(\rho\right),\quad \rho\in{S},
\end{equation*}
and from elementary considerations we have
\begin{equation*}
\operatorname{Re}\left(-\rho\right)=-\left|\rho\right|\cos\left(\arg\left(\rho\right)\right)\leq-\frac{\sqrt{2}}{2}\left|\rho\right|<0
\end{equation*}
and
\begin{equation*}
\operatorname{Re}\left(i\rho\right)=\left|\rho\right|\cos\left(\arg\left(\rho\right)+\frac{\pi}{2}\right)=-\left|\rho\right|\sin\left(\arg\left(\rho\right)\right)\leq 0.
\end{equation*}
Using this, it follows that for $\rho\in{S}$ we have as $\left|\rho\right|\rightarrow\infty$
\begin{equation*}\label{316}
\left|e^{i\rho}\right|\leq 1,\quad \left|e^{-\rho}\right|=\mathcal{O}\,(e^{-b\left|\rho\right|})\rightarrow 0,
\end{equation*}
for some constant $b>0$. We shall use these observations henceforth without explicit mention.

The underlying idea in the proof of the next theorem is that it is enough to base it on asymptotic properties in $\left|\lambda\right|$ of the solutions to \eqref{303}, according to Theorem \ref{T-3-2}. We are here concerned with the case where $\lambda$ is replaced by $i\rho^2$, and \eqref{303} becomes therefore
\begin{equation}\label{303bb}
\varphi^{\left(4\right)} -(\gamma-\eta^2)\,\varphi''+2i\beta\eta\rho^2 \varphi' =\rho^4 \varphi.
\end{equation}
For convenience, we state a result from \cite{MahinzaeimEtAl2022}.
\begin{lemma}\label{L-4-1x}
In the sector ${S}$, there exists a fundamental system $\left\{\varphi_r\left(\rho,\,\cdot\,\right)\right\}^4_{r=1}$ of the differential equation \eqref{303bb} which has the following asymptotic expressions for large $\left|\rho\right|$:
\begin{equation*}
\varphi^{\left(m\right)}_r\left(\rho,s\right)=\left(i^r\rho \right)^me^{i^r\rho s}\left(1+\Phi_r\left(s\right)+\dfrac{i^r\Phi_{r1}\left(s\right)+m\Phi'_{r}\left(s\right)}{i^r\rho}+\mathcal{O}\,(\rho^{-2})\right),\quad r=1,2,3,4,
\end{equation*}
for $m=0,1,2,3$, $0\le s\le 1$, where
\begin{equation*}
\Phi_r\left(s\right)=-1+e^{\left(-1\right)^{r+1}\frac{i\beta\eta}{2} s},\quad \Phi_{r1}\left(s\right)= \frac{\left(-i\right)^r}{4}\left(\frac{\beta^2\eta^2}{2}+\gamma-\eta^2\right) s e^{\left(-1\right)^{r+1}\frac{i\beta\eta}{2}s},\quad r=1,2,3,4.
\end{equation*}
\end{lemma}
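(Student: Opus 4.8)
The key observation is that \eqref{303bb} has \emph{constant} coefficients in $s$, so that no Birkhoff--Tamarkin or WKB machinery is actually required: every solution is a linear combination of exponentials $e^{zs}$, where $z$ is a root of the characteristic quartic
\begin{equation*}
z^4-(\gamma-\eta^2)\,z^2+2i\beta\eta\rho^2 z-\rho^4=0,
\end{equation*}
and the lemma reduces to an asymptotic analysis of these four roots for large $\left|\rho\right|$. First I would substitute $z=i^r\rho w$, which recasts the characteristic equation as
\begin{equation*}
w^4-1+2i^{r+1}\beta\eta\,\rho^{-1}w-(\gamma-\eta^2)\,(-1)^r\rho^{-2}w^2=0,
\end{equation*}
a regular perturbation of $w^4=1$ in the small parameter $\rho^{-1}$. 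Since $w=1$ is a simple root of $w^4-1$, the analytic implicit function theorem furnishes, for $\left|\rho\right|$ large, a unique root $w_r\left(\rho\right)=1+p_r\rho^{-1}+q_r\rho^{-2}+\mathcal{O}\,(\rho^{-3})$ near $1$, holomorphic in $\rho^{-1}$ and hence with the expansion uniform for $\rho\in{S}$; equating coefficients of $\rho^{-1}$ and $\rho^{-2}$ gives $p_r=-\frac{i^{r+1}\beta\eta}{2}$ and $q_r=\frac{(-1)^r}{4}\bigl(\frac{\beta^2\eta^2}{2}+\gamma-\eta^2\bigr)$. Consequently the four characteristic roots are $z_r\left(\rho\right)=i^r\rho\,w_r\left(\rho\right)=i^r\rho+i^rp_r+i^rq_r\rho^{-1}+\mathcal{O}\,(\rho^{-2})$ for $r=1,2,3,4$; their leading terms $i^r\rho$ being pairwise distinct, the $z_r\left(\rho\right)$ are pairwise distinct for $\left|\rho\right|$ large, so that $\varphi_r\left(\rho,s\right)\coloneqq e^{z_r\left(\rho\right)s}$, $r=1,2,3,4$, form a fundamental system of \eqref{303bb}.

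Next I would expand $\varphi^{\left(m\right)}_r\left(\rho,s\right)=z_r\left(\rho\right)^me^{z_r\left(\rho\right)s}$. From $z_r=i^r\rho\bigl(1+p_r\rho^{-1}+\mathcal{O}\,(\rho^{-2})\bigr)$ one obtains $z_r^m=\left(i^r\rho\right)^m\bigl(1+mp_r\rho^{-1}+\mathcal{O}\,(\rho^{-2})\bigr)$, while, since $z_r-i^r\rho=i^rp_r+i^rq_r\rho^{-1}+\mathcal{O}\,(\rho^{-2})$ remains bounded on ${S}$,
\begin{equation*}
e^{z_rs}=e^{i^r\rho s}e^{i^rp_rs}\bigl(1+i^rq_rs\,\rho^{-1}+\mathcal{O}\,(\rho^{-2})\bigr),
\end{equation*}
all expansions holding uniformly for $s\in[0,1]$. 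Putting $\Phi_r\left(s\right)\coloneqq e^{i^rp_rs}-1$ and $\Phi_{r1}\left(s\right)\coloneqq i^rq_rs\,e^{i^rp_rs}$, so that $1+\Phi_r\left(s\right)=e^{i^rp_rs}$ and $\Phi'_r\left(s\right)=i^rp_re^{i^rp_rs}$ (whence $mp_re^{i^rp_rs}=m\Phi'_r\left(s\right)/i^r$), multiplication of the two expansions reproduces exactly
\begin{equation*}
\varphi^{\left(m\right)}_r\left(\rho,s\right)=\left(i^r\rho\right)^me^{i^r\rho s}\left(1+\Phi_r\left(s\right)+\frac{i^r\Phi_{r1}\left(s\right)+m\Phi'_r\left(s\right)}{i^r\rho}+\mathcal{O}\,(\rho^{-2})\right),\quad m=0,1,2,3,
\end{equation*}
and substituting $i^rp_r=\frac{(-1)^{r+1}i\beta\eta}{2}$ and $i^rq_r=\frac{(-i)^r}{4}\bigl(\frac{\beta^2\eta^2}{2}+\gamma-\eta^2\bigr)$ into $\Phi_r$ and $\Phi_{r1}$ yields the stated closed forms.

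The argument is essentially bookkeeping; the two points that deserve care are (i) carrying the root expansion $w_r\left(\rho\right)$ to \emph{second} order in $\rho^{-1}$, since $p_r$ produces the bounded phase correction $\Phi_r$ that persists as $\left|\rho\right|\to\infty$ whereas $q_r$ produces the amplitude correction $\Phi_{r1}$, and (ii) verifying that the $\mathcal{O}\,(\rho^{-2})$ error terms are uniform in $s\in[0,1]$ and across the sector ${S}$, which is immediate here because $e^{i^rp_rs}$ and its first three derivatives are bounded on $[0,1]$ and the Taylor series of $w_r\left(\rho\right)$ converges uniformly for $\left|\rho\right|$ large. One may of course alternatively just invoke the corresponding result of \cite{MahinzaeimEtAl2022}, of which Lemma \ref{L-4-1x} is the verbatim specialisation to the single-pipe characteristic equation.
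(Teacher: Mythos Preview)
Your proof is correct. The paper does not actually prove Lemma~\ref{L-4-1x}; it merely quotes it from \cite{MahinzaeimEtAl2022} (``For convenience, we state a result from \cite{MahinzaeimEtAl2022}''), so there is nothing to compare against beyond the citation you yourself mention at the end.

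What you supply is a self-contained direct argument that the paper does not give. The key simplification you exploit---that \eqref{303bb} has constant coefficients in $s$, so the fundamental system is exactly $\{e^{z_r(\rho)s}\}$ with $z_r$ the roots of the characteristic quartic---is genuinely more elementary than the general Birkhoff asymptotics one would otherwise invoke, and it makes the uniformity of the $\mathcal{O}(\rho^{-2})$ remainder in $s$ and across the sector $S$ immediate. Your computations of $p_r=-\tfrac{i^{r+1}\beta\eta}{2}$ and $q_r=\tfrac{(-1)^r}{4}\bigl(\tfrac{\beta^2\eta^2}{2}+\gamma-\eta^2\bigr)$ check out, as do the identifications $i^rp_r=\tfrac{(-1)^{r+1}i\beta\eta}{2}$ and $i^rq_r=\tfrac{(-i)^r}{4}\bigl(\tfrac{\beta^2\eta^2}{2}+\gamma-\eta^2\bigr)$ that recover the stated $\Phi_r$, $\Phi_{r1}$. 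The bookkeeping translating $z_r^m e^{z_r s}$ into the displayed form via $1+\Phi_r=e^{i^rp_rs}$ and $\Phi'_r=i^rp_r e^{i^rp_rs}$ is clean and accurate.
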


Let us note from the lemma that, in ${S}$, since $\omega_r=i^r$ and $\omega^2_r=\left(-1\right)^r$, with $\omega_1=-\omega_3=i$ and $\omega_2=-\omega_4=-1$
\begin{equation*}
\varphi^{\left(m\right)}_r\left(\rho,s\right)=\left(\rho\omega_r\right)^me^{\rho\omega_r s}\left(1+\Phi_r\left(s\right)+\frac{\omega_r\Phi_{r1}\left(s\right)+m\Phi'_r\left(s\right)}{\rho\omega_r}+\mathcal{O}\,(\rho^{-2})\right),\quad r=1,2,3,4,
\end{equation*}
which we shall use subsequently.

We know that linear combinations of the form $\varphi\left(\rho,s\right)=\sum_{r=1}^4a_r\varphi_r\left(\rho,s\right)$ satisfy \eqref{303bb}. It is immediately obvious from Theorem \ref{T-3-2} that $D_1\left(\lambda\right)=0$ is equivalent to the condition that the system of equations
\begin{equation}\label{320}
\left\{\begin{split}
\sum^4_{r=1}a_r\varphi_r\left(\rho,1\right)&=0,\\
\sum^4_{r=1}a_r\varphi''_r\left(\rho,1\right)&=0, \\
\sum^4_{r=1}a_r\,\bigl[\varphi''_r\left(\rho,0\right)-(\alpha+i\kappa\rho^2)\, \varphi_r\left(\rho,0\right)\bigr]&=0, \\
\sum^4_{r=1}a_r\,\bigl[\varphi_r^{\left(3\right)}\left(\rho,0\right)-(\gamma-\eta^2)\, \varphi_r'\left(\rho,0\right)+i \beta\eta\rho^2 \varphi_r\left(\rho,0\right)\bigr] &=0
\end{split}\right.
\end{equation}
should have nonzero solutions. Similarly, $D_2\left(\lambda\right)=0$ in Theorem \ref{T-3-2} is equivalent to the condition that
\begin{equation}\label{321}
\left\{\begin{split}
\sum^4_{r=1}a_r\varphi_r\left(\rho,1\right)&=0,\\
\sum^4_{r=1}a_r\varphi''_r\left(\rho,1\right)&=0, \\
\sum^4_{r=1}a_r\,\bigl[\varphi''_r\left(\rho,0\right)-(\alpha+i\kappa\rho^2)\, \varphi_r\left(\rho,0\right)\bigr]&=0, \\
\sum^4_{r=1}a_r\varphi_r\left(\rho,0\right)&=0
\end{split}\right.
\end{equation}
should have nonzero solutions. Let $\Delta_i\left(\lambda\right)=\det\left(\Delta_{i,1}\left(\rho\right),\Delta_{i,2}\left(\rho\right),\Delta_{i,3}\left(\rho\right),\Delta_{i,4}\left(\rho\right)\right)$ be the characteristic determinants associated with \eqref{320} and \eqref{321}, for $i = 1, 2$, respectively, with the column entries
\begin{align*}
\Delta_{1,1}\left(\rho\right)&\coloneqq\left(\begin{matrix}
\varphi_1\left(\rho,1\right) \\[0.1em]
\varphi_1''\left(\rho,1\right) \\[0.1em]
\varphi_1''\left(\rho,0\right)-(\alpha+i\kappa\rho^2)\,\varphi_1'\left(\rho,0\right)\\[0.1em]
\varphi^{\left(3\right)}_1\left(\rho,0\right)-(\gamma-\eta^2)\,\varphi'_1\left(\rho,0\right)+i\beta\eta\rho^2 \varphi_1\left(\rho,0\right) 
\end{matrix}\right),\\[0.5em]
\Delta_{1,2}\left(\rho\right)&\coloneqq\left(\begin{matrix}
 \varphi_2\left(\rho,1\right)\\[0.1em]
 \varphi_2''\left(\rho,1\right)\\[0.1em]
 \varphi_2''\left(\rho,0\right)-(\alpha+i\kappa\rho^2)\,\varphi_2'\left(\rho,0\right)\\[0.1em]
 \varphi^{\left(3\right)}_2\left(\rho,0\right)-(\gamma-\eta^2)\,\varphi'_2\left(\rho,0\right)+i\beta\eta\rho^2 \varphi_2\left(\rho,0\right)
\end{matrix}\right),\\[0.5em]
\Delta_{1,3}\left(\rho\right)&\coloneqq\left(\begin{matrix}
 \varphi_3\left(\rho,1\right) \\[0.1em]
\varphi_3''\left(\rho,1\right) \\[0.1em]
\varphi_3''\left(\rho,0\right)-(\alpha+i\kappa\rho^2)\,\varphi_3'\left(\rho,0\right)\\[0.1em]
\varphi^{\left(3\right)}_3\left(\rho,0\right)-(\gamma-\eta^2)\,\varphi'_3\left(\rho,0\right)+i\beta\eta\rho^2 \varphi_3\left(\rho,0\right) 
\end{matrix}\right),\\[0.5em]
\Delta_{1,4}\left(\rho\right)&\coloneqq\left(\begin{matrix}
\varphi_4\left(\rho,1\right)\\[0.1em]
\varphi_4''\left(\rho,1\right)\\[0.1em]
 \varphi_4''\left(\rho,0\right)-(\alpha+i\kappa\rho^2)\,\varphi_4'\left(\rho,0\right)\\[0.1em]
 \varphi^{\left(3\right)}_4\left(\rho,0\right)-(\gamma-\eta^2)\,\varphi'_4\left(\rho,0\right)+i\beta\eta\rho^2 \varphi_4\left(\rho,0\right)
\end{matrix}\right)
\end{align*}
and
\begin{align*}
\Delta_{2,1}\left(\rho\right)&\coloneqq\left(\begin{matrix}
\varphi_1\left(\rho,1\right) \\[0.1em]
\varphi_1''\left(\rho,1\right) \\[0.1em]
\varphi_1''\left(\rho,0\right)-(\alpha+i\kappa\rho^2)\,\varphi_1'\left(\rho,0\right)\\[0.1em]
\varphi_1\left(\rho,0\right) 
\end{matrix}\right),\\[0.5em]
\Delta_{2,2}\left(\rho\right)&\coloneqq\left(\begin{matrix}
\varphi_2\left(\rho,1\right)\\[0.1em]
\varphi_2''\left(\rho,1\right)\\[0.1em]
 \varphi_2''\left(\rho,0\right)-(\alpha+i\kappa\rho^2)\,\varphi_2'\left(\rho,0\right)\\[0.1em]
 \varphi_2\left(\rho,0\right)
\end{matrix}\right),\\[0.5em]
\Delta_{2,3}\left(\rho\right)&\coloneqq\left(\begin{matrix}
\varphi_3\left(\rho,1\right) \\[0.1em]
\varphi_3''\left(\rho,1\right) \\[0.1em]
\varphi_3''\left(\rho,0\right)-(\alpha+i\kappa\rho^2)\,\varphi_3'\left(\rho,0\right)\\[0.1em]
\varphi_3\left(\rho,0\right) 
\end{matrix}\right),\\[0.5em]
\Delta_{2,4}\left(\rho\right)&\coloneqq\left(\begin{matrix}
 \varphi_4\left(\rho,1\right)\\[0.1em]
 \varphi_4''\left(\rho,1\right)\\[0.1em]
 \varphi_4''\left(\rho,0\right)-(\alpha+i\kappa\rho^2)\,\varphi_4'\left(\rho,0\right)\\[0.1em]
 \varphi_4\left(\rho,0\right)
\end{matrix}\right).
\end{align*}
Clearly the zeros of the characteristic functions $D_1$, $D_2$ coincide (including multiplicities) with those of $\Delta_1$, $\Delta_2$, respectively. So there are again two cases.
\begin{enumerate}[label=,leftmargin=*,align=left,labelwidth=\parindent,labelsep=0pt,ref={\arabic*}]
\item\label{item05y}\textbf{Case 1.} \textit{Asymptotic zeros of $\Delta_1$}. It is a straightforward calculation to show that
\begin{align*}\label{eqwetr555}
\varphi_r\left(\rho,1\right)&=e^{\rho\omega_r}e^{-\frac{i\beta\eta}{2}\omega^2_r}\\
&\qquad\times\left[1 +\frac{1}{4\rho\omega_r}\left(\frac{\beta^2\eta^2}{2}+\gamma-\eta^2 \right)+\mathcal{O}\,(\rho^{-2})\right], \\
\varphi''_r\left(\rho,1\right)&=\left(\rho\omega_r\right)^2e^{\rho\omega_r}e^{-\frac{i\beta\eta}{2}\omega^2_r}\,\biggl[1+\frac{1}{4\rho\omega_r}\,\biggl(\frac{\beta^2\eta^2}{2}+\gamma-\eta^2\\
&\qquad-4i\beta\eta \omega^2_r \biggr)+\mathcal{O}\,(\rho^{-2})\biggr],\\
\varphi''_r\left(\rho,0\right)-(\alpha+i\kappa\rho^2)\, \varphi'_r\left(\rho,0\right)&=i\kappa\rho^2\left( \rho\omega_r\right)\left(-1+\frac{i\beta\eta}{2\rho}\omega_r+\frac{\omega_r}{i\kappa\rho}+\mathcal{O}\,(\rho^{-2})\right),\\
\varphi^{\left(3\right)}_r\left(\rho,0\right)-(\gamma-\eta^2)\,\varphi'_r\left(\rho,0\right)+ i\beta\eta \rho^2 \varphi_r\left(\rho,0\right) &=\left(\rho\omega_r\right)^3\left(1-\frac{3i\beta\eta}{2\rho}\omega_r+\frac{i\beta\eta}{\rho}\omega_r+\mathcal{O}\,(\rho^{-2})\right)
\end{align*}
for $r=1,2,3,4$. Substituting these expressions in the equation $\Delta_1\left(\lambda\right)=0$ and performing elementary computations, we obtain that  $\Delta_{1,r}\left(\rho\right)=\Delta_{1,r,0}\left(\rho\right)+\mathcal{O}\,(e^{-b\left|\rho\right|})$, $r=1,2,3,4$, where
\begin{align*}
\Delta_{1,1,0}\left(\rho\right)&\coloneqq\left(\begin{matrix}
e^{i\rho}e^{\frac{i\beta\eta}{2}}\left[1+\frac{1}{4i\rho}\left(\frac{\beta^2\eta^2}{2}+\gamma-\eta^2 \right)+\mathcal{O}\,(\rho^{-2})\right] \\[0.1em]
-e^{i\rho}e^{\frac{i\beta\eta}{2}}\left[1+\frac{1}{4i\rho}\left(\frac{\beta^2\eta^2}{2}+\gamma-\eta^2 +4i\beta\eta\right)+\mathcal{O}\,(\rho^{-2})\right] \\[0.1em]
-\kappa\left(-1-\frac{\beta\eta}{2\rho}+\frac{1}{\kappa\rho}+\mathcal{O}\,(\rho^{-2})\right) \\[0.1em]
-i\left(1+\frac{\beta\eta}{2\rho}+\mathcal{O}\,(\rho^{-2})\right) 
\end{matrix}\right),\\[0.5em]
\Delta_{1,2,0}\left(\rho\right)&\coloneqq\left(\begin{matrix}
 0 \\[0.1em]
 0\\[0.1em]
 -i\kappa\left(-1-\frac{i\beta\eta}{2\rho}-\frac{1}{i\kappa\rho}+\mathcal{O}\,(\rho^{-2})\right)\\[0.1em]
 -\left(1+\frac{i\beta\eta}{2\rho}+\mathcal{O}\,(\rho^{-2})\right)
\end{matrix}\right),\\[0.5em]
\Delta_{1,3,0}\left(\rho\right)&\coloneqq\left(\begin{matrix}
e^{-i\rho}e^{\frac{i\beta\eta}{2}}\left[1-\frac{1}{4i\rho}\left(\frac{\beta^2\eta^2}{2}+\gamma-\eta^2 \right)+\mathcal{O}\,(\rho^{-2})\right]\\[0.1em]
-e^{-i\rho}e^{\frac{i\beta\eta}{2}}\left[1-\frac{1}{4i\rho}\left(\frac{\beta^2\eta^2}{2}+\gamma-\eta^2 +4i\beta\eta\right)+\mathcal{O}\,(\rho^{-2})\right]\\[0.1em]
\kappa\left(-1+\frac{\beta\eta}{2\rho}-\frac{1}{\kappa\rho}+\mathcal{O}\,(\rho^{-2})\right) \\[0.1em]
i\left(1-\frac{\beta\eta}{2\rho}+\mathcal{O}\,(\rho^{-2})\right) 
\end{matrix}\right),\\[0.5em]
\Delta_{1,4,0}\left(\rho\right)&\coloneqq\left(\begin{matrix}
 e^{-\frac{i\beta\eta}{2}}\left[1+\frac{1}{4\rho}\left(\frac{\beta^2\eta^2}{2}+\gamma-\eta^2 \right)+\mathcal{O}\,(\rho^{-2})\right]\\[0.1em]
 e^{-\frac{i\beta\eta}{2}}\left[1+\frac{1}{4\rho}\left(\frac{\beta^2\eta^2}{2}+\gamma-\eta^2 -4i\beta\eta\right)+\mathcal{O}\,(\rho^{-2})\right]\\[0.1em]
 0\\[0.1em]
 0
\end{matrix}\right)
\end{align*}
and we have used that $\omega_1=-\omega_3=i$, $\omega_2=-\omega_4=-1$, $\omega_1^2=\omega_3^2=-1$, $\omega_2^2=\omega_4^2=1$ and therefore that $e^{\rho\omega_2}=e^{-\rho\omega_4}=e^{-\rho}$, $e^{\rho\omega_1}=e^{-\rho\omega_3}=e^{
i\rho}$ in ${S}$. So the characteristic equation has an asymptotic representation of the form
\begin{equation}\label{eq22wcharw}
\cos\rho+\left(\frac{\beta^2\eta^2}{2}+\gamma-\eta^2\right)\frac{\sin\rho+\cos\rho}{4\rho}-\frac{i\left(\cos\rho-\sin\rho\right)}{2\kappa\rho}+\mathcal{O}\,(\rho^{-2})=0
\end{equation}
or
\begin{equation*}
\cos\rho+\mathcal{O}\,(\rho^{-1})=0.
\end{equation*}
(The idea of the proof would now be to establish, by applying Rouche's theorem, that, choosing $f\left(\rho\right)= \cos\rho$ and $g\left(\rho\right)= -\mathcal{O}\,(\rho^{-1})$,  $\left|g\right|\le 1< \left|f\right|$ on a closed contour in the complex plane around $\rho=\left(n+1/2\right)\pi$ for large $n$, and $f$ must therefore have only one zero within the contour; see \cite[Theorem 3.2]{MahinzaeimEtAl2022}.) Let the sequence $\left\{\lambda_n\right\}$ represent the roots of \eqref{eq22wcharw} and set $\rho_n=\left(n+\frac{1}{2}\right)\pi+z_n$.
Since
\begin{align*}
\cos\rho_n&=\cos\left(n+\frac{1}{2}\right)\pi\cos z_n-\sin\left(n+\frac{1}{2}\right)\pi\sin z_n=-\left(-1\right)^{n}\sin z_n,\\
\sin\rho_n&=\sin\left(n+\frac{1}{2}\right)\pi\cos z_n+\cos\left(n+\frac{1}{2}\right)\pi\sin z_n=\left(-1\right)^{n}\cos z_n,
\end{align*}
it follows that the $z_n$ satisfy
\begin{equation*}
\sin z_n=\left(\frac{i}{\kappa}+\frac{\frac{\beta^2\eta^2}{2}+\gamma-\eta^2}{2}\right)\frac{\cos z_n}{2\rho_n}+\mathcal{O}\,(\rho^{-2}_n).
\end{equation*}
Thus, for large $n$ (equivalently, small $z_n$),
\begin{equation*}
z_n=\left(\frac{i}{\kappa}+\frac{\frac{\beta^2\eta^2}{2}+\gamma-\eta^2}{2}\right)\frac{1}{2\left(n+\frac{1}{2}\right)\pi}+\mathcal{O}\,(n^{-2}).
\end{equation*}
We write $\rho_n=\tau_n+z_n$ where $\tau_n= \left(n+\frac{1}{2}\right)\pi$ to obtain, taking into account $\lambda_n=i\rho_n^2=i\left(\tau_n+z_n\right)^2$,
\begin{equation*}
\lambda_n=-\frac{1}{\kappa}+i\left(\tau_n^2+ \frac{\frac{\beta^2\eta^2}{2}+\gamma-\eta^2
 }{2}\right)+\mathcal{O}\,(\tau_n^{-1}),\quad\tau_n=\left(n+\frac{1}{2}\right)\pi.
\end{equation*}
\item\label{item05yss}\textbf{Case 2.} \textit{Asymptotic zeros of $\Delta_2$}. In this case,
\begin{align*}
\varphi_r\left(\rho,1\right)&=e^{\rho\omega_r}e^{-\frac{i\beta\eta}{2}\omega^2_r}\\
&\qquad\times\left[1 +\frac{1}{4\rho\omega_r}\left(\frac{\beta^2\eta^2}{2}+\gamma-\eta^2 \right)+\mathcal{O}\,(\rho^{-2})\right], \\
\varphi''_r\left(\rho,1\right)&=\left(\rho\omega_r\right)^2e^{\rho\omega_r}e^{-\frac{i\beta\eta}{2}\omega^2_r}\,\biggl[1+\frac{1}{4\rho\omega_r}\,\biggl(\frac{\beta^2\eta^2}{2}+\gamma-\eta^2\\
&\qquad-4i\beta\eta \omega^2_r \biggr)+\mathcal{O}\,(\rho^{-2})\biggr],\\
\varphi''_r\left(\rho,0\right)-(\alpha+i\kappa\rho^2)\, \varphi'_r\left(\rho,0\right)&=i\kappa\rho^2\left( \rho\omega_r\right)\left(-1+\frac{i\beta\eta}{2\rho}\omega_r+\frac{\omega_r}{i\kappa\rho}+\mathcal{O}\,(\rho^{-2})\right),\\
\varphi_r\left(\rho,0\right) &=1+\mathcal{O}\,(\rho^{-2})
\end{align*}
for $r=1,2,3,4$ and hence, using arguments analogous to those given in Case \ref{item05y} we compute $\Delta_{2,r}\left(\rho\right)=\Delta_{2,r,0}\left(\rho\right)+\mathcal{O}\,(e^{-b\left|\rho\right|})$, $r=1,2,3,4$, where
\begin{align*}
\Delta_{2,1,0}\left(\rho\right)&\coloneqq\left(\begin{matrix}
e^{i\rho}e^{\frac{i\beta\eta}{2}}\left[1+\frac{1}{4i\rho}\left(\frac{\beta^2\eta^2}{2}+\gamma-\eta^2 \right)+\mathcal{O}\,(\rho^{-2})\right] \\[0.1em]
-e^{i\rho}e^{\frac{i\beta\eta}{2}}\left[1+\frac{1}{4i\rho}\left(\frac{\beta^2\eta^2}{2}+\gamma-\eta^2 +4i\beta\eta\right)+\mathcal{O}\,(\rho^{-2})\right]
\\[0.1em]
-\kappa\left(-1-\frac{\beta\eta}{2\rho}+\frac{1}{\kappa\rho}+\mathcal{O}\,(\rho^{-2})\right)\\[0.1em]
1+\mathcal{O}\,(\rho^{-2})
\end{matrix}\right),\\[0.5em]
\Delta_{2,2,0}\left(\rho\right)&\coloneqq\left(\begin{matrix}
 0 \\[0.1em]
 0\\[0.1em]
-i\kappa\left(-1-\frac{i\beta\eta}{2\rho}-\frac{1}{i\kappa\rho}+\mathcal{O}\,(\rho^{-2})\right)\\[0.1em]
1+\mathcal{O}\,(\rho^{-2})
\end{matrix}\right),\\[0.5em]
\Delta_{2,3,0}\left(\rho\right)&\coloneqq\left(\begin{matrix}
e^{-i\rho}e^{\frac{i\beta\eta}{2}}\left[1-\frac{1}{4i\rho}\left(\frac{\beta^2\eta^2}{2}+\gamma-\eta^2 \right)+\mathcal{O}\,(\rho^{-2})\right] \\[0.1em]
 -e^{-i\rho}e^{\frac{i\beta\eta}{2}}\left[1-\frac{1}{4i\rho}\left(\frac{\beta^2\eta^2}{2}+\gamma-\eta^2 +4i\beta\eta\right)+\mathcal{O}\,(\rho^{-2})\right]\\[0.1em]
\kappa\left(-1+\frac{\beta\eta}{2\rho}-\frac{1}{\kappa\rho}+\mathcal{O}\,(\rho^{-2})\right)\\[0.1em]
1+\mathcal{O}\,(\rho^{-2})
\end{matrix}\right),\\[0.5em]
\Delta_{2,4,0}\left(\rho\right)&\coloneqq\left(\begin{matrix}
e^{-\frac{i\beta\eta}{2}}\left[1+\frac{1}{4\rho}\left(\frac{\beta^2\eta^2}{2}+\gamma-\eta^2 \right)+\mathcal{O}\,(\rho^{-2})\right]\\[0.1em]
 e^{-\frac{i\beta\eta}{2}}\left[1+\frac{1}{4\rho}\left(\frac{\beta^2\eta^2}{2}+\gamma-\eta^2 -4i\beta\eta\right)+\mathcal{O}\,(\rho^{-2})\right]\\[0.1em]
0\\[0.1em]
0
\end{matrix}\right),
\end{align*}
and therefore the characteristic equation has an asymptotic representation of the form
\begin{equation}\label{eq22wchar}
\cos\left(\rho+\frac{\pi}{4}\right)+\left(\frac{\beta^2\eta^2}{2}+\gamma-\eta^2 \right)\frac{\sqrt{2}\cos\rho}{4\rho} +\frac{2i\sqrt{2}\sin\rho}{2\kappa\rho}+\mathcal{O}\,(\rho^{-2})=0
\end{equation}
or
\begin{equation*}
\cos\left(\rho+\frac{\pi}{4}\right)+\mathcal{O}\,(\rho^{-1})=0,
\end{equation*}
where we have used that $\frac{\sqrt{2}}{2}\cos\rho-\frac{\sqrt{2}}{2}\sin\rho=\cos\left(\rho+\frac{\pi}{4}\right)$. Let the sequence $\left\{\lambda_n\right\}$ represent the roots of \eqref{eq22wchar} and set $\rho_n=\left(n+\frac{1}{4}\right)\pi+z_n$. Since
\begin{align*}
\cos\rho_n&=\cos\left(n+\frac{1}{4}\right)\pi\cos z_n-\sin\left(n+\frac{1}{4}\right)\pi\sin z_n=\frac{\sqrt{2}}{2}\left(-1\right)^{n}\cos z_n-\frac{\sqrt{2}}{2}\left(-1\right)^{n}\sin z_n,\\
\sin\rho_n&=\sin\left(n+\frac{1}{4}\right)\pi\cos z_n+\cos\left(n+\frac{1}{4}\right)\pi\sin z_n=\frac{\sqrt{2}}{2}\left(-1\right)^{n}\cos z_n+\frac{\sqrt{2}}{2}\left(-1\right)^{n}\sin z_n,
\end{align*}
we have
\begin{equation*}
\cos\left(\rho_n+\frac{\pi}{4}\right)=-\left(-1\right)^{n}\sin z_n
\end{equation*}
and, proceeding as before, that the $z_n$ in this case satisfy
\begin{equation*}
\sin z_n=\left(\frac{\frac{\beta^2\eta^2}{2}+\gamma-\eta^2}{2}\right)\frac{\cos z_n}{2\rho_n}+\frac{i}{\kappa\rho_n}+\mathcal{O}\,(\rho^{-2}_n).
\end{equation*}
Therefore, for large $n$,
\begin{equation*}
z_n=\left(\frac{\frac{\beta^2\eta^2}{2}+\gamma-\eta^2}{2}\right)\frac{1}{2\left(n+\frac{1}{4}\right)\pi}+\frac{i}{\kappa\left(n+\frac{1}{4}\right)\pi}+\mathcal{O}\,(n^{-2}),
\end{equation*}
and by a similar calculation to Case \ref{item05y}, we then get
\begin{equation*}
\lambda_n=-\frac{2}{\kappa}+i\left(\tau_n^2+ \frac{\frac{\beta^2\eta^2}{2}+\gamma-\eta^2
 }{2}\right)+\mathcal{O}\,(\tau_n^{-1}),\quad \tau_n=\left(n+\frac{1}{4}\right)\pi.
\end{equation*}
\end{enumerate}

We collect the foregoing results together into the following theorem.
\begin{theorem}\label{T-3-3}
The spectrum of $\mathcal{T}$ consists of two branches of a discrete set of eigenvalues, $\sigma\left(\mathcal{T}\right)=\sigma^{\left(1\right)}\left(\mathcal{T}\right)\cup\sigma^{\left(2\right)}\left(\mathcal{T}\right)$, where asymptotically, for large enough $n$,
\begin{equation*}\label{305}
\sigma^{\left(1\right)}\left(\mathcal{T}\right)=\left\{\lambda\in \mathbb{C}~\middle|~ \Delta_1\left(\lambda\right)=0\right\}=\{\lambda^{\left(1\right)}_{\pm n}{\}}^\infty_{n=0},\quad 
 \sigma^{\left(2\right)}\left(\mathcal{T}\right)=\left\{\lambda\in \mathbb{C}~\middle|~ \Delta_2\left(\lambda\right)=0\right\}=\{\lambda^{\left(2\right)}_{\pm n}{\}}^\infty_{n=0},
\end{equation*}
with the sequences enumerated properly in the sense of \cite[Remark 4.2]{MahinzaeimEtAl2021} (i.e., indexed in such a way that $\lambda_n=\overline{\lambda_{-n}}$ whenever $\operatorname{Im}\left({\lambda}_n\right)\neq 0$). Both branches are confined to vertical strips in the open left half-plane such that, as $n\rightarrow\infty$,
\begin{equation*}
|\operatorname{Re}\,(\lambda^{(i)}_{ n})|\leq N<\infty,\quad \operatorname{Im}\,(\lambda^{(i)}_{ n})\rightarrow \infty,\quad i=1,2,
\end{equation*}
some constant $N$. The eigenvalues belonging to $\sigma_1\left(\mathcal{T}\right)$ have asymptotic representations
\begin{equation*}
\lambda^{\left(1\right)}_{ \pm n}=-\frac{1}{\kappa}\pm i\left((\tau^{\left(1\right)}_n{)}^2+ \frac{\frac{\beta^2\eta^2}{2}+\gamma-\eta^2
 }{2}\right)+\mathcal{O}\,\bigl((\tau^{\left(1\right)}_{ n }{)}^{-1}\bigr),\quad \tau^{\left(1\right)}_n=\left(n+\frac{1}{2}\right)\pi,\quad n\rightarrow\infty,
\end{equation*}
and those belonging to $\sigma_2\left(\mathcal{T}\right)$ have asymptotic representations
\begin{equation*}
\lambda^{\left(2\right)}_{\pm n}=-\frac{2}{\kappa}\pm i\left((\tau^{\left(2\right)}_n{)}^2+ \frac{\frac{\beta^2\eta^2}{2}+\gamma-\eta^2
 }{2}\right)+\mathcal{O}\,\bigl((\tau^{\left(2\right)}_{ n }{)}^{-1}\bigr),\quad \tau^{\left(2\right)}_n=\left(n+\frac{1}{4}\right)\pi,\quad n\rightarrow\infty.
\end{equation*}
\end{theorem}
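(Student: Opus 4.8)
The plan is to assemble the case analysis that immediately precedes the statement into a rigorous argument; the one genuinely new ingredient is a Rouch\'e-type counting lemma. First I would recall from Theorem \ref{T-3-2} that $\sigma\left(\mathcal{T}\right)=\left\{\lambda\mid D_1\left(\lambda\right)=0\right\}\cup\left\{\lambda\mid D_2\left(\lambda\right)=0\right\}$ and that, since the characteristic functions $D_i$ and the determinants $\Delta_i$ share their zeros together with multiplicities, it suffices to study the zeros of $\Delta_1$ and $\Delta_2$ after the substitution $\lambda=i\rho^2$, $\rho\in S$. From Theorem \ref{T-3-1} we already know that $\sigma\left(\mathcal{T}\right)$ is symmetric about the real axis and, for $\kappa>0$, lies in the open left half-plane; this is precisely what pins down the enumeration $\lambda_n=\overline{\lambda_{-n}}$ once the zeros have been located in the sector $\frac{\pi}{2}\le\arg\left(\lambda\right)\le\pi$, equivalently $0\le\arg\left(\rho\right)\le\frac{\pi}{4}$.

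Next I would substitute the asymptotic expansions of Lemma \ref{L-4-1x} into the column vectors $\Delta_{i,r}\left(\rho\right)$, use the relations $\omega_1=-\omega_3=i$, $\omega_2=-\omega_4=-1$ together with $\left|e^{i\rho}\right|\le 1$ and $\left|e^{-\rho}\right|=\mathcal{O}\,(e^{-b\left|\rho\right|})$ on $S$, and expand the $4\times 4$ determinants to obtain $\Delta_{i,r}\left(\rho\right)=\Delta_{i,r,0}\left(\rho\right)+\mathcal{O}\,(e^{-b\left|\rho\right|})$ and hence the asymptotic characteristic equations \eqref{eq22wcharw} and \eqref{eq22wchar}, that is, $\cos\rho+\mathcal{O}\,(\rho^{-1})=0$ for $\Delta_1$ and $\cos\left(\rho+\frac{\pi}{4}\right)+\mathcal{O}\,(\rho^{-1})=0$ for $\Delta_2$, all estimates being uniform in $S$ for large $\left|\rho\right|$.

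The core step is a localisation of the zeros by Rouch\'e's theorem, exactly as in \cite[Theorem 3.2]{MahinzaeimEtAl2022}. Fix small discs $B^{\left(1\right)}_n$ centred at $\tau^{\left(1\right)}_n=\left(n+\frac{1}{2}\right)\pi$ and $B^{\left(2\right)}_n$ centred at $\tau^{\left(2\right)}_n=\left(n+\frac{1}{4}\right)\pi$; on their boundaries one has a uniform lower bound $\left|\cos\rho\right|\ge\delta>0$ (respectively $\left|\cos\left(\rho+\frac{\pi}{4}\right)\right|\ge\delta$) while the remainder is $\mathcal{O}\,(\rho^{-1})\to 0$, so for $n$ large the remainder is strictly dominated and each disc contains exactly one, hence simple, zero $\rho^{\left(i\right)}_n$ of the corresponding characteristic function; the same lower bound on the part of $S$ outside $\bigcup_n B^{\left(i\right)}_n$ shows there are no further zeros for large $\left|\rho\right|$. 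Writing $\rho^{\left(i\right)}_n=\tau^{\left(i\right)}_n+z^{\left(i\right)}_n$, solving the reduced equation for $z^{\left(i\right)}_n$ as done just before the theorem, and inserting into $\lambda^{\left(i\right)}_n=i(\rho^{\left(i\right)}_n{)}^2$ yields the stated asymptotic representations. The finitely many remaining eigenvalues lie in the open left half-plane by Theorem \ref{T-3-1}, and since $\operatorname{Re}\,(\lambda^{\left(i\right)}_n)\to-1/\kappa$ (respectively $-2/\kappa$) is bounded while $\operatorname{Im}\,(\lambda^{\left(i\right)}_n)\sim(\tau^{\left(i\right)}_n{)}^2\to\infty$, both branches are confined to vertical strips in the open left half-plane; symmetry and Theorem \ref{T-3-1} then give the proper enumeration $\sigma\left(\mathcal{T}\right)=\sigma^{\left(1\right)}\left(\mathcal{T}\right)\cup\sigma^{\left(2\right)}\left(\mathcal{T}\right)$.

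I expect the Rouch\'e step to be the main obstacle. One must produce the contours together with the lower bound $\left|\cos\rho\right|\ge\delta$ with a constant that is genuinely uniform in $n$ throughout the whole sector $S$ (not merely on the real axis), control the contribution of the exponentially small determinant remainders $\mathcal{O}\,(e^{-b\left|\rho\right|})$ to the strict inequality $\left|g\right|<\left|f\right|$ on the boundary, and verify that the two branches counted in this way exhaust $\sigma\left(\mathcal{T}\right)$ with neither omission nor double counting --- in particular that for $n$ large the discs $B^{\left(1\right)}_n$ and $B^{\left(2\right)}_n$ are disjoint, so that $\sigma^{\left(1\right)}\left(\mathcal{T}\right)$ and $\sigma^{\left(2\right)}\left(\mathcal{T}\right)$ do not interfere asymptotically.
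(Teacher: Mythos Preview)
Your proposal is correct and follows essentially the same approach as the paper: the paper's proof consists precisely of the two-case determinant expansion leading to \eqref{eq22wcharw} and \eqref{eq22wchar}, followed by the Rouch\'e localisation (which the paper only sketches parenthetically, citing \cite[Theorem 3.2]{MahinzaeimEtAl2022}) and the substitution $\rho_n=\tau_n+z_n$ to extract the asymptotics. If anything, you are more explicit than the paper about the uniformity of the lower bound on $\left|\cos\rho\right|$ in the sector $S$ and about the disjointness of the discs $B^{\left(1\right)}_n$, $B^{\left(2\right)}_n$; these are genuine details the paper leaves to the reader via the citation.
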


\subsection{Multiplicity of Eigenvalues}

For the study of eigenvalue multiplicities, we quote the following useful result from \cite[Corollary 4.2.2]{Xu2010}.
\begin{lemma}\label{xumult}
Let $\mathcal{A}$ be a linear operator in a Hilbert space $\mathscr{X}$, and let $\lambda$ be an eigenvalue of $\mathcal{A}$ with corresponding eigenvector $x$, i.e.\ $\left(\lambda {I}-\mathcal{A}\right)x=0$, $x\neq 0$. If there is a nonzero element $z\in\operatorname{Ker} \,(\overline{\lambda} {I}-\mathcal{A}^*)$ such that the inner product $\left(x,z\right) \neq 0$, then $\lambda$ is a simple (respectively, semisimple) eigenvalue if $\operatorname{dim}\operatorname{Ker} \left(\lambda {I}-\mathcal{A}\right)=1$ (respectively, $\operatorname{dim}\operatorname{Ker} \left(\lambda {I}-\mathcal{A}\right)\ne 1$).
\end{lemma}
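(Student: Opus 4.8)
The plan is to reduce the assertion to the absence of associated vectors at $\lambda$, i.e.\ to the equality $\operatorname{Ker}(\lambda I-\mathcal{A})^2=\operatorname{Ker}(\lambda I-\mathcal{A})$: the chain $\operatorname{Ker}(\lambda I-\mathcal{A})\subset\operatorname{Ker}(\lambda I-\mathcal{A})^2\subset\cdots$ stabilises (an easy induction shows that once two consecutive kernels agree the chain is constant thereafter), so equality already at the first step says that the root subspace at $\lambda$ coincides with the eigenspace, whence $\lambda$ is semisimple, and simple precisely when that eigenspace is one-dimensional. Thus it is enough to prove: if $y\in\mathscr{D}(\mathcal{A})$ with $(\lambda I-\mathcal{A})y\in\mathscr{D}(\mathcal{A})$ satisfies $(\lambda I-\mathcal{A})^2y=0$, then $u\coloneqq(\lambda I-\mathcal{A})y=0$.

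The key step is a one-line adjointness computation. Since $z\in\operatorname{Ker}(\overline{\lambda}I-\mathcal{A}^*)\subset\mathscr{D}(\mathcal{A}^*)$ and $u\in\operatorname{Ker}(\lambda I-\mathcal{A})\subset\mathscr{D}(\mathcal{A})$,
\begin{equation*}
(u,z)=\bigl((\lambda I-\mathcal{A})y,z\bigr)=\bigl(y,(\overline{\lambda}I-\mathcal{A}^*)z\bigr)=0,
\end{equation*}
so every vector arising as the $(\lambda I-\mathcal{A})$-image of a putative associated vector is orthogonal to the test vector $z$. (Equivalently, because $\lambda$ is a normal eigenvalue the range of $\lambda I-\mathcal{A}$ is closed and equals $\operatorname{Ker}(\overline{\lambda}I-\mathcal{A}^*)^{\perp}$, so the hypothesis $(x,z)\neq0$ says exactly that $x\notin\operatorname{Ran}(\lambda I-\mathcal{A})$, i.e.\ no Jordan chain of length $\geq 2$ can terminate at $x$.)

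Now split into cases. If $\dim\operatorname{Ker}(\lambda I-\mathcal{A})=1$, then $u$ is a scalar multiple of $x$; comparing $(u,z)=0$ with $(x,z)\neq0$ forces that scalar, and hence $u$, to vanish — so there are no associated vectors and $\lambda$ is simple. If $\dim\operatorname{Ker}(\lambda I-\mathcal{A})=m\geq 2$, I would run the same argument in every direction: take a basis $x_1,\dots,x_m$ of the eigenspace and vectors $z_1,\dots,z_m\in\operatorname{Ker}(\overline{\lambda}I-\mathcal{A}^*)$ — a space that is also $m$-dimensional since $\lambda$ is normal — for which the matrix $\bigl[(x_j,z_l)\bigr]_{j,l=1}^{m}$ is invertible; expanding $u$ in the $x_j$ and using $(u,z_l)=0$ for every $l$ then gives $u=0$, and again $\lambda$ has no associated vectors, hence is semisimple.

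The step I expect to be the real obstacle is producing that biorthogonal family in the case $m\geq 2$ — equivalently, establishing nondegeneracy of the sesquilinear pairing between $\operatorname{Ker}(\lambda I-\mathcal{A})$ and $\operatorname{Ker}(\overline{\lambda}I-\mathcal{A}^*)$, since a single admissible pair $(x,z)$ only controls one direction of the eigenspace. In the situation at hand this is handled concretely: for each eigenvalue of geometric multiplicity two one exhibits a matching pair of adjoint eigenvectors built directly from the explicit eigenvectors \eqref{306dds}--\eqref{306ddsss} of Theorem~\ref{T-3-2}, after which the argument closes. Everything else — the kernel-chain stabilisation and the adjointness identity — is routine.
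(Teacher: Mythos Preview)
The paper does not prove this lemma; it is quoted verbatim from \cite[Corollary 4.2.2]{Xu2010} and invoked as a black box, so there is no argument in the paper to compare yours against.

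Your proof for the case $\dim\operatorname{Ker}(\lambda I-\mathcal{A})=1$ is correct and is the standard one: the adjointness identity forces $(\lambda I-\mathcal{A})y\perp z$ for any putative generalised eigenvector $y$, while the one-dimensional eigenspace is spanned by $x$ with $(x,z)\neq 0$, so $(\lambda I-\mathcal{A})y=0$ and the kernel chain stabilises at the first step.

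You are also right to flag the case $m\geq 2$ as the real obstacle, and in fact the lemma \emph{as literally stated} does not hold there: a single pair $(x,z)$ with $(x,z)\neq 0$ cannot rule out Jordan chains terminating at eigenvectors outside $\operatorname{span}\{x\}$. For a concrete counterexample take on $\mathbb{C}^3$ the nilpotent operator with one $1\times 1$ and one $2\times 2$ Jordan block; the eigenspace at $0$ is two-dimensional, the choice $x=z=e_1$ satisfies the hypothesis, yet $0$ is not semisimple. What is actually needed, as you identify, is nondegeneracy of the full pairing between $\operatorname{Ker}(\lambda I-\mathcal{A})$ and $\operatorname{Ker}(\overline{\lambda}I-\mathcal{A}^*)$. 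This is precisely what the paper supplies in its application: for each $\lambda\in\sigma_2(\mathcal{T})$ the explicit pairs $(x_i,z_i)$ of \eqref{306dds}--\eqref{306ddsss} and \eqref{306ddsa}--\eqref{306ddsssa} give, via \eqref{sumeq12}, the Gram matrix $\operatorname{diag}\bigl(\tfrac{3}{2}B(\lambda),\,2B(\lambda)\bigr)$ with $B(\lambda)\neq 0$ (the off-diagonal entries vanish since the coefficient vectors $(1,-\tfrac12,-\tfrac12)$ and $(0,1,-1)$ are orthogonal). So your diagnosis of where the work lies, and of how it is discharged in the concrete setting, matches what the paper actually does.
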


In order to use Lemma \ref{xumult}, we need to consider the spectral problem for the adjoint operator for $\mathcal{T}$, $\mathcal{T}^*$. First we prove a proposition.
\begin{proposition}\label{L-3-1}
The adjoint operator $\mathcal{T}^*$ is defined on the domain
\begin{equation}\label{314}
\mathscr{D}\left(\mathcal{T}^*\right)=\left\{z=\left\{z_k\right\}^3_{k=1}\in \mathscr{X}~\middle|
~\begin{gathered}
z_k=\left(\widetilde{w}_k,\widetilde{v}_k\right)\in (\bm{H}^4\left(0,1\right)\cap \bm{H}_*^2\left(0,1\right))\times \bm{H}_*^2\left(0,1\right),\\
\widetilde{w}''_k\left(1\right)=0,\quad \widetilde{w}''_k\left(0\right)-\alpha \widetilde{w}'_k\left(0\right)+\kappa \widetilde{v}'_k\left(0\right)=0,\quad k=1,2,3,\\
\sum_{k=1}^3\,\bigl[\widetilde{w}^{\left(3\right)}_k\left(0\right)-(\gamma-\eta^2)\, \widetilde{w}_{k}'\left(0\right)+\beta\eta \widetilde{v}_k\left(0\right)\bigr]=0
\end{gathered}\right\},
\end{equation}
by
\begin{equation}\label{313}
\mathcal{T}z\coloneqq\bigl\{\bigl(-\widetilde{v}_k,\widetilde{w}_k^{\left(4\right)}-(\gamma-\eta^2)\,\widetilde{w}''_k+2\beta\eta \widetilde{v}_k'\bigr)\bigr\}_{k=1}^3.
\end{equation}
\end{proposition}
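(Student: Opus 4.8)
The plan is to read $\mathcal{T}^{*}$ off its defining property: $z=\{(\widetilde{w}_{k},\widetilde{v}_{k})\}_{k=1}^{3}$ lies in $\mathscr{D}(\mathcal{T}^{*})$ precisely when there is a $y\in\mathscr{X}$ with $(\mathcal{T}x,z)_{\mathscr{X}}=(x,y)_{\mathscr{X}}$ for all $x\in\mathscr{D}(\mathcal{A})$, and then $\mathcal{T}^{*}z=y$. Write $\mathcal{T}^{\sharp}z\coloneqq\{(-\widetilde{v}_{k},\widetilde{w}_{k}^{(4)}-(\gamma-\eta^{2})\widetilde{w}_{k}''+2\beta\eta\widetilde{v}_{k}')\}_{k=1}^{3}$ for the differential expression on the right of \eqref{313} and let $\mathcal{S}$ be the operator given by \eqref{313}--\eqref{314}. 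For $x\in\mathscr{D}(\mathcal{A})$ and $z$ regular enough (in particular for $z\in\mathscr{D}(\mathcal{S})$) I would integrate by parts — twice in the $\bm{H}^{2}_{*}(\mathbf{G})$-part of $(\,\cdot\,,\,\cdot\,)_{\mathscr{X}}$ and up to four times in the $\bm{L}_{2}(\mathbf{G})$-part — to obtain an identity $(\mathcal{T}x,z)_{\mathscr{X}}=(x,\mathcal{T}^{\sharp}z)_{\mathscr{X}}+\mathcal{R}(x,z)$, with $\mathcal{R}(x,z)$ an explicit bilinear form in the boundary values of $x$ and $z$ at the endpoints $s=0,1$ of the three edges. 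For $z\in\mathscr{D}(\mathcal{S})$ each term of $\mathcal{R}(x,z)$ at $s=1$ carries a vanishing factor ($w_{k}(1),w_{k}''(1),v_{k}(1)$ by \eqref{204}; $\widetilde{w}_{k}(1),\widetilde{v}_{k}(1),\widetilde{w}_{k}''(1)$ by $z\in\mathscr{X}$ and \eqref{314}), while at $s=0$ one substitutes $w_{k}''(0)=\alpha w_{k}'(0)+\kappa v_{k}'(0)$ and $\widetilde{w}_{k}''(0)=\alpha\widetilde{w}_{k}'(0)-\kappa\widetilde{v}_{k}'(0)$, uses $w_{j}(0)=w_{k}(0)$ and $\widetilde{w}_{j}(0)=\widetilde{w}_{k}(0)$, and sees the surviving edgewise pieces collapse into the force-balance sums of \eqref{204} and \eqref{314}; hence $\mathcal{R}(x,z)=0$ for all $x\in\mathscr{D}(\mathcal{A})$, so $z\in\mathscr{D}(\mathcal{T}^{*})$ and $\mathcal{T}^{*}z=\mathcal{T}^{\sharp}z$. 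This gives the inclusion $\mathcal{S}\subseteq\mathcal{T}^{*}$.

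It remains to show $\mathcal{T}^{*}\subseteq\mathcal{S}$, and the cleanest route I see is to note that $\mathcal{S}$ is just $-\mathcal{T}$ after the replacement $\kappa\mapsto-\kappa$ is made in the domain \eqref{204} — the action \eqref{203}--\eqref{205} does not involve $\kappa$, and \eqref{314} is exactly \eqref{204} with $\kappa$ negated. All the arguments of Lemma \ref{L-2-1} use only $\alpha\geq0$ and survive this sign change: the $\lambda=0$ problem for $\mathcal{S}$ is again \eqref{208} (the $\kappa$-term disappears once the velocity coordinate vanishes), so $\mathcal{S}$ is injective; the Lax--Milgram step goes through with the same coercive form $\langle\,\cdot\,,\,\cdot\,\rangle$, only the sign of the $\kappa$-term in the conjugate-linear functional changing while it stays bounded, so $\mathcal{S}$ is surjective and $0\in\varrho(\mathcal{S})$; and the analogue of \eqref{214} gives $2\operatorname{Re}(\mathcal{S}z,z)_{\mathscr{X}}=-2\kappa\sum_{k=1}^{3}|\widetilde{v}_{k}'(0)|^{2}\leq0$, so, $\varrho(\mathcal{S})$ being open, $\mathcal{S}$ is maximal dissipative. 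On the other hand $\mathcal{T}$ generates a contraction $C_{0}$-semigroup by Theorem \ref{T-2-1}, hence the adjoint semigroup is again a contraction $C_{0}$-semigroup and $\mathcal{T}^{*}$ is maximal dissipative too. Since a maximal dissipative operator admits no proper dissipative extension, $\mathcal{S}\subseteq\mathcal{T}^{*}$ with both maximal dissipative forces $\mathcal{T}^{*}=\mathcal{S}$, as claimed.

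A fully direct alternative for $\mathcal{T}^{*}\subseteq\mathcal{S}$ would be: for $z\in\mathscr{D}(\mathcal{T}^{*})$ first test the defining identity against $x\in\mathscr{D}(\mathcal{A})$ whose coordinates vanish near $s=0$ and $s=1$; varying the velocity coordinates alone and then the displacement coordinates alone, a standard bootstrap upgrades $\widetilde{w}_{k}$ to $\bm{H}^{4}(0,1)$ and $\widetilde{v}_{k}$ to $\bm{H}^{2}(0,1)$ and identifies $\mathcal{T}^{*}z=\mathcal{T}^{\sharp}z$ on the interior, after which the integration by parts above is legitimate and yields $\mathcal{R}(x,z)=0$ for all $x\in\mathscr{D}(\mathcal{A})$; letting the admissible boundary data of $x$ at $s=1$ and $s=0$ vary — subject only to the two linear constraints in \eqref{204} — and reading off coefficients then forces $\widetilde{w}_{k}''(1)=0$, $\widetilde{v}_{k}(1)=0$, $\widetilde{w}_{k}''(0)-\alpha\widetilde{w}_{k}'(0)+\kappa\widetilde{v}_{k}'(0)=0$ and the adjoint force-balance sum. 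The main obstacle either way is the boundary-term bookkeeping: keeping track of the many contributions in $\mathcal{R}(x,z)$ and checking that the sign-reversed $\kappa$-condition together with the continuity and force-balance conditions on the $z$-side is exactly what annihilates them for every admissible $x$ (and is forced in the converse direction). The shortcut through Lemma \ref{L-2-1} trades this for the short verification that the dissipativity and Lax--Milgram computations there are unaffected by $\kappa\mapsto-\kappa$.
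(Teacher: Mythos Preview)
Your first step --- the integration-by-parts identity $(\mathcal{T}x,z)_{\mathscr{X}}=(x,\mathcal{T}^{\sharp}z)_{\mathscr{X}}+\mathcal{R}(x,z)$ and the verification that $\mathcal{R}(x,z)=0$ once $z$ satisfies the conditions in \eqref{314} --- is exactly the paper's argument: the authors call it ``a formal calculation,'' write out the long boundary expression for $(\mathcal{T}x,z)_{\mathscr{X}}$ with $z\in\bm{H}^{4}(\mathbf{G})\times\bm{H}^{2}(\mathbf{G})$, and then simply impose the conditions in \eqref{314} to make the boundary terms vanish and read off $\mathcal{T}^{*}z$. That is where the paper stops; it does not address the reverse inclusion $\mathscr{D}(\mathcal{T}^{*})\subseteq\mathscr{D}(\mathcal{S})$ at all.

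Your proposal therefore goes beyond the paper in a genuine way. The maximal-dissipativity shortcut is correct and efficient: the observation that $\mathcal{S}$ is $-\mathcal{T}$ with $\kappa\mapsto-\kappa$ in the domain is accurate, the injectivity and Lax--Milgram parts of Lemma~\ref{L-2-1} are indeed insensitive to the sign of $\kappa$ (the $\kappa$-term drops out of the $\lambda=0$ problem and merely flips sign in the bounded functional $F$), and the analogue of \eqref{214} gives the stated dissipativity of $\mathcal{S}$; together with maximal dissipativity of $\mathcal{T}^{*}$ (inherited from the contraction semigroup of Theorem~\ref{T-2-1}) this closes the equality $\mathcal{T}^{*}=\mathcal{S}$ with no extra boundary bookkeeping. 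What you gain over the paper is a rigorous identification of the full domain $\mathscr{D}(\mathcal{T}^{*})$ rather than just the inclusion $\mathcal{S}\subseteq\mathcal{T}^{*}$; what the paper's bare formal calculation buys is brevity, at the cost of leaving the converse inclusion to the reader. Your sketched direct alternative (test functions compactly supported in the interior, bootstrap regularity, then vary admissible boundary data) is the standard route and would also work, but the maximality argument you give first is cleaner here precisely because it recycles Lemma~\ref{L-2-1} wholesale.
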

\begin{proof}
The proof is a formal calculation. First note that $x\in\mathscr{D}\left(\mathcal{T}\right)\left(=\mathscr{D}\left(\mathcal{A}\right)\right)$ implies $v_k\left(1\right)=0$, $k=1,2,3$, and $v_j\left(0\right)=v_k\left(0\right)$, $j,k=1,2,3$. Using this, it follows from integration by parts that if $x\in\mathscr{D}\left(\mathcal{T}\right)$, then for any $z=\left\{\left(\widetilde{w}_k,\widetilde{v}_k\right)\right\}^3_{k=1}\in \bm{H}^4\left(\mathbf{G}\right)\times \bm{H}^2\left(\mathbf{G}\right)\left(\supset \mathscr{D}\left(\mathcal{T}^*\right)\right)$,
\begin{align*}
\left(\mathcal{T}x,z\right)_\mathscr{X}&=-\sum_{k=1}^3\left[\int^1_0w''_k\left(s\right)\overline{\widetilde{v}''_k\left(s\right)}ds+(\gamma-\eta^2) \int^1_0w'_k\left(s\right)\overline{\widetilde{v}'_k\left(s\right)}ds+\alpha w'_k\left(0\right)\overline{\widetilde{v}'_k\left(0\right)}\right]\\
&\qquad+\sum_{k=1}^3\int^1_0v_k\left(s\right)\left[\overline{\widetilde{w}^{\left(4\right)}_k\left(s\right)}-(\gamma-\eta^2) \,\overline{\widetilde{w}''_k\left(s\right)} +2\beta\eta \overline{\widetilde{v}'_k\left(s\right)}\right]ds\\
&\qquad+\sum_{k=1}^3v_k'\left(1\right)\overline{\widetilde{w}_k''\left(1\right)}-\sum_{k=1}^3\biggl[w^{\left(3\right)}_k\left(1\right)-(\gamma-\eta^2)\,w'_k\left(1\right)+2\beta\eta v_k\left(1\right)\biggr]\,\overline{\widetilde{v}_k\left(1\right)}\\
&\qquad-\sum_{k=1}^3v_k'\left(0\right)\Bigl(\overline{\widetilde{w}_k''\left(0\right)}-\alpha\overline{\widetilde{w}_k'\left(0\right)}+\kappa\widetilde{v}'_k\left(0\right)\Bigr)\\
&\qquad+\sum_{k=1}^3v_k\left(0\right)\left[\overline{\widetilde{w}^{\left(3\right)}_k\left(0\right)}-(\gamma-\eta^2)\,\overline{\widetilde{w}_k'\left(0\right)}+\beta\eta \widetilde{v}'_k\left(0\right)\right]\\
&\qquad-\sum_{k=1}^3\Bigl(w''_k\left(0\right)-\alpha w'_k\left(0\right)-\kappa v'_k\left(0\right)\Bigr)\, \overline{\widetilde{v}'_k\left(0\right)}\\
&\qquad+\sum_{k=1}^3\biggl[w^{\left(3\right)}_k\left(0\right)- (\gamma-\eta^2)\,w'_k\left(0\right)+\beta\eta v_k\left(0\right)\biggr]\,\overline{\widetilde{v}_k\left(0\right)}
\end{align*}
Let $\widetilde{v}_k\left(1\right)=0$, $k=1,2,3$, and let $\widetilde{v}_j\left(0\right)=\widetilde{v}_k\left(0\right)$, $j,k=1,2,3$. Further, let $\widetilde{w}_k\left(1\right)=\widetilde{w}''_k\left(1\right)=0$, $\widetilde{w}''_k\left(0\right)-\alpha \widetilde{w}'_k\left(0\right)+\kappa \widetilde{v}'_k\left(0\right)=0$, $k=1,2,3$, $\widetilde{w}_j\left(0\right)=\widetilde{w}_k\left(0\right)$, $j,k=1,2,3$, and $\sum^3_{k=1}\,\bigl[\widetilde{w}^{\left(3\right)}_k\left(0\right)-(\gamma-\eta^2)\, \widetilde{w}_{k}'\left(0\right)+\beta\eta \widetilde{v}_k\left(0\right)\bigr]=0$. Then we have $\left\{\left(\widetilde{w}_k,\widetilde{v}_k\right)\right\}^3_{k=1}\in \mathscr{D}\left(\mathcal{T}^*\right)$ and
\begin{align*}
\left(\mathcal{T}x,z\right)_\mathscr{X}&=\left(x,\mathcal{T}^* z\right)_\mathscr{X}\\
&=-\sum_{k=1}^3\left[\int^1_0w''_k\left(s\right)\overline{\widetilde{v}''_k\left(s\right)}ds+(\gamma-\eta^2) \int^1_0w'_k\left(s\right)\overline{\widetilde{v}'_k\left(s\right)}ds+\alpha w'_k\left(0\right)\overline{\widetilde{v}'_k\left(0\right)}\right]\\
&\qquad+\sum_{k=1}^3\int^1_0v_k\left(s\right)\left[\overline{\widetilde{w}^{\left(4\right)}_k\left(s\right)}-(\gamma-\eta^2) \,\overline{\widetilde{w}''_k\left(s\right)} +2\beta\eta \overline{\widetilde{v}'_k\left(s\right)}\right]ds\\
\end{align*}
where $\mathscr{D}\left(\mathcal{T}^*\right)$, $\mathcal{T}^*$ are as in the lemma, completing the proof.
\end{proof}
Let us now consider the spectral problem for $\mathcal{T}^*$ defined by \eqref{314}, \eqref{313},
\begin{equation}\label{eq_01xaa02s}
\mathcal{T}^*z=\mu z,\quad z\in\mathscr{D}\left(\mathcal{T}^*\right),\quad \mu\in\mathbb{C},
\end{equation}
which in coordinates is given by
\begin{equation}\label{213ss}
\left\{\begin{aligned}
&\widetilde{w}^{\left(4\right)}_k-(\gamma-\eta^2)\, \widetilde{w}''_k-2\mu\beta \eta \widetilde{w}_{k}'=-\mu^2\widetilde{w}_k,\quad k=1,2,3,\\
&\widetilde{w}_k\left(1\right)=\widetilde{w}''_k\left(1\right)=0,\quad k=1,2,3,\\
&\widetilde{w}_j\left(0\right)=\widetilde{w}_k\left(0\right),\quad j,k=1,2,3,\\
&\widetilde{w}''_k\left(0\right)-\left(\alpha+\mu\kappa\right) \widetilde{w}'_k\left(0\right)=0,\quad k=1,2,3,\\
&\sum^3_{k=1}\,\bigl[\widetilde{w}^{\left(3\right)}_k\left(0\right)-(\gamma-\eta^2)\, \widetilde{w}_{k}'\left(0\right)+\mu\beta\eta \widetilde{w}_k\left(0\right)\bigr]=0,
\end{aligned}\right.
\end{equation}
where we have taken into account that $\widetilde{v}_k=-\mu\widetilde{w}_k$, $k=1,2,3$. Proceeding formally as in the proof of Theorem \ref{T-3-2} we let $\psi=\psi\left(\mu,s\right)$ be a nonzero solution of the differential equation
\begin{equation}\label{eq12sd44a}
\psi^{\left(4\right)}-(\gamma-\eta^2)\, \psi''-2\mu\beta \eta \psi'=-\mu^2\psi
\end{equation}
satisfying the boundary conditions
\begin{align}
\psi\left(1\right)=\psi''\left(1\right) &= 0,\label{eq12sd44b} \\
\psi''\left(0\right)-\left(\alpha+\mu\kappa\right) \psi'\left(0\right)&=0.\label{eq12sd44c}
\end{align}
Obviously, $\psi\,(\overline{\mu},\,\cdot\,)=\overline{\psi\,(\mu,\,\cdot\,)}$. It is clear that solutions $w_k=w_k\left(\mu,s\right)$ of \eqref{213ss} are of the form $w_k\left(\mu,s\right)=b_k\psi\left(\mu,s\right)$, $k=1,2,3$, arbitrary constants $b_k$, and therefore we have in the case $\psi\left(\mu,0\right)\ne 0$ that $\mu$ is an eigenvalue if and only if \eqref{eq12sd44a}--\eqref{eq12sd44c} with the supplementary requirement
\begin{equation*}
\psi^{\left(3\right)}\left(\mu,0\right)-(\gamma-\eta^2)\,\psi'\left(\mu,0\right)-\mu \beta\eta\psi\left(\mu,0\right)=0
\end{equation*}
has a nonzero solution. The corresponding eigenvector $z=z\left(\mu\right)$ is given by
\begin{equation}\label{30ddd3xa}
z\left(\mu\right)=\left\{b_k\left(\psi\left(\mu,\,\cdot\,\right),-\mu \psi\left(\mu,\,\cdot\,\right)\right)\right\}^3_{k=1}
\end{equation}
with the $b_k\equiv b\ne 0$.

In the case $\psi\left(\mu,0\right)=0$, again continuing as in the proof of Theorem \ref{T-3-2}, we have $\sum^3_{k=1}b_k=0$ and it follows that there are two linearly independent eigenvectors for the eigenvalue $\mu$, $z_1=z_1\left(\mu\right)$, $z_2=z_2\left(\mu\right)$, given by
\begin{align}
z_1\left(\mu\right)&=\bigl\{\left(\psi\left(\mu,\,\cdot\,\right),-\mu\psi\left(\mu,\,\cdot\,\right)\right);-\frac{1}{2}\left(\psi\left(\mu,\,\cdot\,\right),-\mu\psi\left(\mu,\,\cdot\,\right)\right); -\frac{1}{2}\left(\psi\left(\mu,\,\cdot\,\right),-\mu\psi\left(\mu,\,\cdot\,\right)\right)\bigr\},\label{306ddsa}\\
z_2\left(\mu\right)&=\bigl\{\left(0,0\right); \left(\psi\left(\mu,\,\cdot\,\right),-\mu\psi\left(\mu,\,\cdot\,\right)\right);-\left(\psi\left(\mu,\,\cdot\,\right),-\mu\psi\left(\mu,\,\cdot\,\right)\right)\bigr\},\label{306ddsssa}
\end{align}
respectively. We can now turn to the verification of the condition $\left(x,z\right)_{\mathscr{X}} \neq 0$ in Lemma \ref{xumult}. To do this note first that, since $\mathscr{X}$ is a Hilbert space, $\sigma\left(\mathcal{T}^*\right)=\overline{\sigma\left(\mathcal{T}\right)}$. Let $\lambda\in\sigma\left(\mathcal{T}\right)$, $\mu\in\sigma\left(\mathcal{T}^*\right)$, and let $x\left(\lambda\right)=\left\{\left(w_k\left(\lambda,\,\cdot\,\right),v_k\left(\lambda,\,\cdot\,\right)\right)\right\}_{k=1}^3$ and $z\left(\mu\right)=\left\{\left(\widetilde{w}_k\left(\mu,\,\cdot\,\right),\widetilde{v}_k\left(\mu,\,\cdot\,\right)\right)\right\}_{k=1}^3$ be the eigenvectors corresponding to $\lambda$ and $\mu$, respectively. We take the inner product of \eqref{eq_01xaa02} with $z\left(\mu\right)$ and obtain, taking into account \eqref{eq_01xaa02s},
\begin{equation*}
\lambda\left( x\left(\lambda\right),z\left(\mu\right)\right)_\mathscr{X}=\left( \mathcal{T}x\left(\lambda\right),z\left(\mu\right)\right)_\mathscr{X}=\left( x\left(\lambda\right),\mathcal{T}^*z\left(\mu\right)\right)_\mathscr{X}=\overline{\mu}\left( x\left(\lambda\right),z\left(\mu\right)\right)_\mathscr{X}.
\end{equation*}
Consequently, for $\mu\neq\overline{\lambda}$, $\left( x\left(\lambda\right),z\left(\mu\right)\right)_\mathscr{X}=0$, so we set $\mu=\overline{\lambda}$. Direct calculations show that
\begin{equation}\label{sumeq12}
(x\left(\lambda\right),z\,(\overline{\lambda}){)}_{\mathscr{X}}=B\left(\lambda\right)\sum_{k=1}^3c_k \overline{b_k}
\end{equation}
with
\begin{align*}
B\left(\lambda\right)&\coloneqq \int^1_0\varphi''\left(\lambda,s\right)\psi''\left(\lambda,s\right) ds+(\gamma-\eta^2)\int^1_0\varphi'\left(\lambda,s\right)\psi'\left(\lambda,s\right) ds\\
&\qquad+\alpha\varphi'\left(\lambda,0\right)\psi'\left(\lambda,0\right)-\lambda^2\int_{0}^1\varphi\left(\lambda,s\right)\psi\left(\lambda,s\right)ds,
\end{align*}
which is easily verified on using the fact that, by \eqref{308aaazz} and \eqref{30ddd3xa}, $w_k\left(\lambda,\,\cdot\,\right)=c_k\varphi\left(\lambda,\,\cdot\,\right)$, $v_k\left(\lambda,\,\cdot\,\right)=c_k\lambda \varphi\left(\lambda,\,\cdot\,\right)$, $\widetilde{w}_k\left(\lambda,\,\cdot\,\right)=b_k\psi\,(\overline{\lambda},\,\cdot\,)$, $\widetilde{v}_k\left(\lambda,\,\cdot\,\right)=-b_k\overline{\lambda} \psi\,(\overline{\lambda},\,\cdot\,)$, $k=1,2,3$, and that $\overline{\psi\,(\overline{\lambda},\,\cdot\,)}=\psi\,(\lambda,\,\cdot\,)$. Now integration by parts in the first and second integrals in $B\left(\lambda\right)$ yields
\begin{align*}
B\left(\lambda\right)&=-\lambda^2\int_{0}^1\varphi\left(\lambda,s\right)\psi\left(\lambda,s\right)ds+\int_0^1\biggl[\varphi^{\left(4\right)}\left(\lambda,s\right)-(\gamma-\eta^2)\, \varphi''\left(\lambda,s\right)\biggr]\,\psi\left(\lambda,s\right)ds\\
&\qquad+\varphi''\left(\lambda,1\right)\psi'\left(\lambda,1\right)-\left[\varphi^{\left(3\right)}\left(\lambda,1\right)
-(\gamma-\eta^2)\,\varphi'\left(\lambda,1\right)\right]\psi\left(\lambda,1\right)\\
&\qquad-\left(\varphi''\left(\lambda,0\right)-\alpha\varphi'\left(\lambda,0\right)\right) \psi'\left(\lambda,0\right)
+\left[\varphi^{\left(3\right)}\left(\lambda,0\right) -(\gamma-\eta^2)\,\varphi'\left(\lambda,0\right)\right]\psi\left(\lambda,0\right).
\end{align*}
Inserting the differential equation \eqref{303} and using that $\varphi''\left(1\right)=0$, $\varphi''\left(0\right)-\left(\alpha+\lambda\kappa\right)\varphi\left(0\right)=0$, $\psi\left(1\right)=0$, we can write
\begin{align*}
B\left(\lambda\right)&=\left(D_1\left(\lambda\right)-\lambda\beta\eta D_2\left(\lambda\right)\right)\psi\left(\lambda,0\right)
-\lambda\kappa \varphi'\left(\lambda,0\right)\psi'\left(\lambda,0\right)\\
&\qquad-2\lambda\int_{0}^1\left(\lambda\varphi\left(\lambda,s\right)+\beta\eta
\varphi'\left(\lambda,s\right)\right)\psi\left(\lambda,s\right)ds.
\end{align*}
Define $(\partial {\varphi}/\partial \lambda)\left(\lambda,s\right)=\widehat{\varphi}\left(\lambda,s\right)$. Then, assuming that $\widehat{\varphi}$ is an associated function, we have from \eqref{303}--\eqref{303b} that $\widehat{\varphi}=\widehat{\varphi}\left(\lambda,s\right)$ satisfies the differential equation
\begin{equation}\label{303xa}
\widehat{\varphi}^{\left(4\right)} -(\gamma-\eta^2)\,\widehat{\varphi}''+2\lambda\beta\eta \widehat{\varphi}'+\lambda^2 \widehat{\varphi} =-2\beta\eta {\varphi}'-2\lambda \varphi
\end{equation}
and the boundary conditions
\begin{align}
\widehat{\varphi} \left( 1\right) = \widehat{\varphi}''\left(1\right) &= 0,\label{303axb} \\
\widehat{\varphi}''\left(0\right)-\left(\alpha+\lambda\kappa\right) \widehat{\varphi}'\left(0\right)-\kappa\varphi'\left(0\right)&=0.\label{303bxc}
\end{align}
Recalling the expressions for the characteristic functions $D_1$, $D_2$ from Theorem \ref{T-3-2}, we also have
\begin{align}
D'_1\left(\lambda\right)&=\widehat{\varphi}^{\left(3\right)}\left(\lambda,0\right)-(\gamma-\eta^2)\,\widehat{\varphi}'\left(\lambda,0\right)+\lambda\beta\eta\widehat{\varphi}\left(\lambda,0\right)+\beta\eta{\varphi}\left(\lambda,0\right),\label{303axsb}\\
 D'_2\left(\lambda\right)&=\widehat{\varphi}\left(\lambda,0\right)\label{303asxb}
\end{align}
where the primes over the characteristic functions indicate differentiation with respect to $\lambda$. By using \eqref{303xa}--\eqref{303asxb}, it follows from integration by parts that with \eqref{eq12sd44a}--\eqref{eq12sd44c},
\begin{align*}
B\left(\lambda\right)&=\left(D_1\left(\lambda\right)-\lambda D'_1\left(\lambda\right)\right)\psi\left(\lambda,0\right)+\lambda D'_2\left(\lambda\right)\left[\psi^{(3)}\left(\lambda,0\right)-(\gamma-\eta^2)\,\psi'\left(\lambda,0\right)-\lambda\beta\eta\psi\left(\lambda,0\right)\right].
\end{align*}
If $D_1\left(\lambda\right)=0$, then $D_2\left(\lambda\right)\ne0$. This implies, since $\psi\left(\lambda,0\right)\ne 0$ and $\psi^{(3)}\left(\lambda,0\right)-(\gamma-\eta^2)\,\psi'\left(\lambda,0\right)-\lambda\beta\eta\psi\left(\lambda,0\right)=0$,
\begin{equation*}
B\left(\lambda\right)=-\lambda D'_1\left(\lambda\right)\psi\left(\lambda,0\right).
\end{equation*}
If $D_2\left(\lambda\right)=0$, then $D_1\left(\lambda\right)\ne0$, which implies
\begin{equation*}
B\left(\lambda\right)=\lambda D'_2\left(\lambda\right)\left[\psi^{(3)}\left(\lambda,0\right)-(\gamma-\eta^2)\,\psi'\left(\lambda,0\right)\right],
\end{equation*}
since $\psi\left(\lambda,0\right)= 0$ and $\psi^{(3)}\left(\lambda,0\right)-(\gamma-\eta^2)\,\psi'\left(\lambda,0\right)\ne 0$.
So clearly if $\lambda$ belongs to the branch $\sigma_1\left(\mathcal{T}\right)$ of the spectrum $\sigma\left(\mathcal{T}\right)$, then from \eqref{sumeq12} we have
\begin{equation*}
(x\left(\lambda\right),z\,(\overline{\lambda}){)}_{\mathscr{X}}=3bcB\left(\lambda\right)\neq 0,
\end{equation*}
as $bc\ne0$. If $\lambda$ belongs to the branch $\sigma_2\left(\mathcal{T}\right)$ of $\sigma\left(\mathcal{T}\right)$ we have
\begin{equation*}
(x_1\left(\lambda\right),z_1\,(\overline{\lambda}){)}_{\mathscr{X}}=\frac{3}{2}B\left(\lambda\right)\neq 0,\quad (x_2\left(\lambda\right),z_2\,(\overline{\lambda}){)}_{\mathscr{X}}=2B\left(\lambda\right)\neq 0
\end{equation*}
where the eigenvectors $x_i\left(\lambda\right)$ and $z_i\,(\overline{\lambda})$ have the forms given by \eqref{306dds}, \eqref{306ddsa} and \eqref{306ddsss}, \eqref{306ddsssa}, for $i = 1, 2$, respectively. The calculations above imply that associated with an eigenvector $x$ corresponding to an eigenvalue $\lambda\in\sigma_1\left(\mathcal{T}\right)$ there is a nontrivial element $z\in \operatorname{Ker} \left(\overline{\lambda}{I}-\mathcal{T}^*\right)$ such that $\left( x,z\right)_\mathscr{X}\neq0$. Then each $\lambda\in\sigma_1\left(\mathcal{T}\right)$ is simple since, by Theorem \ref{T-3-2}, it has a one-dimensional geometric eigenspace, $\dim \operatorname{Ker} \left(\lambda{I}-\mathcal{T}\right)=1$. Similarly, it also follows from the calculations above that associated with an eigenvector $x$ corresponding to an eigenvalue $\lambda\in\sigma_2\left(\mathcal{T}\right)$ there is a nontrivial element $z\in \operatorname{Ker} \left(\overline{\lambda}{I}-\mathcal{T}^*\right)$ such that $\left( x,z\right)_\mathscr{X}\neq0$. Hence, we obtain that each $\lambda\in\sigma_2\left(\mathcal{T}\right)$ is semisimple and has multiplicity $2$ since $\dim \operatorname{Ker} \left(\lambda{I}-\mathcal{T}\right)=2$. We have proven the following theorem.
\begin{theorem}\label{T-3-4}
All eigenvalues of $\mathcal{T}$ are semisimple, i.e.\ there are no associated vectors corresponding to any eigenvalue $\lambda\in\sigma\left(\mathcal{T}\right)$. Each eigenvalue belonging to $\sigma_1\left(\mathcal{T}\right)$ has multiplicity $1$, and each eigenvalue belonging to $\sigma_2\left(\mathcal{T}\right)$ has multiplicity $2$.
\end{theorem}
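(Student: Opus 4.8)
The plan is to invoke the multiplicity criterion of Lemma \ref{xumult}. For a given eigenvalue $\lambda$ of $\mathcal{T}$ with eigenvector $x$ I would produce an eigenvector $z$ of $\mathcal{T}^{*}$ at the conjugate eigenvalue $\overline{\lambda}$ with $\left(x,z\right)_{\mathscr{X}}\neq 0$; since the geometric multiplicities are already pinned down by Theorem \ref{T-3-2} ($1$ on $\sigma_1\left(\mathcal{T}\right)$, $2$ on $\sigma_2\left(\mathcal{T}\right)$), Lemma \ref{xumult} then yields at once that $\lambda$ is semisimple and that its algebraic multiplicity equals its geometric multiplicity, which is precisely the assertion. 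So the whole argument reduces to exhibiting such a $z$, i.e.\ to showing that a certain scalar $B\left(\lambda\right)$ does not vanish.

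The first step is the adjoint spectral problem. Proposition \ref{L-3-1} already gives $\mathcal{T}^{*}$, and carrying the argument of Theorem \ref{T-3-2} through for \eqref{eq_01xaa02s} produces a scalar generating solution $\psi=\psi\left(\mu,\cdot\right)$ of the transposed fourth-order equation (with $\psi\,(\overline{\mu},\cdot)=\overline{\psi\,(\mu,\cdot)}$) and adjoint characteristic functions $\widetilde{D}_1\left(\mu\right)=\psi^{\left(3\right)}\left(\mu,0\right)-(\gamma-\eta^{2})\,\psi'\left(\mu,0\right)-\mu\beta\eta\psi\left(\mu,0\right)$ and $\widetilde{D}_2\left(\mu\right)=\psi\left(\mu,0\right)$. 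By the symmetry of the construction $\sigma\left(\mathcal{T}^{*}\right)=\overline{\sigma\left(\mathcal{T}\right)}$ splits into branches $\overline{\sigma_1\left(\mathcal{T}\right)}$, $\overline{\sigma_2\left(\mathcal{T}\right)}$ with eigenvector structure mirroring \eqref{306}, \eqref{306dds}, \eqref{306ddsss}; in particular $\psi\left(\overline{\lambda},0\right)\neq 0$ on the first branch, while $\psi\left(\overline{\lambda},0\right)=0$ and $\psi^{\left(3\right)}\left(\overline{\lambda},0\right)-(\gamma-\eta^{2})\,\psi'\left(\overline{\lambda},0\right)\neq 0$ on the second.

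Next I would compute the pairing. Taking the $\mathscr{X}$-inner product of \eqref{eq_01xaa02} with the adjoint eigenvector and using $\lambda\left(x,z\right)_{\mathscr{X}}=\left(\mathcal{T}x,z\right)_{\mathscr{X}}=\left(x,\mathcal{T}^{*}z\right)_{\mathscr{X}}=\overline{\mu}\left(x,z\right)_{\mathscr{X}}$ forces $\left(x,z\right)_{\mathscr{X}}=0$ unless $\mu=\overline{\lambda}$, so only that case matters; inserting the explicit eigenvectors written through $\varphi\left(\lambda,\cdot\right)$ and $\psi\left(\lambda,\cdot\right)$ collapses the pairing to a single scalar $B\left(\lambda\right)$ times an explicit nonzero constant as in \eqref{sumeq12} ($3bc$ on $\sigma_1$; on $\sigma_2$ the Gram matrix of $\{x_1,x_2\}$ against $\{z_1,z_2\}$ turns out diagonal, with entries $\tfrac32 B\left(\lambda\right)$ and $2B\left(\lambda\right)$, once one checks the cross pairings vanish identically). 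Two integrations by parts in $B\left(\lambda\right)$, insertion of the differential equation \eqref{303} and of the boundary conditions, and the introduction of $\widehat{\varphi}=\partial\varphi/\partial\lambda$ — which by differentiating \eqref{303}, \eqref{303a}, \eqref{303b} in $\lambda$ satisfies \eqref{303xa}, \eqref{303axb}, \eqref{303bxc}, and whose boundary data reproduce $D_1'\left(\lambda\right)$, $D_2'\left(\lambda\right)$ through \eqref{303axsb}, \eqref{303asxb} — then reduce, after using the adjoint supplementary condition, to
\begin{equation*}
B\left(\lambda\right)=-\lambda D_1'\left(\lambda\right)\psi\left(\lambda,0\right)\ \text{ for }\lambda\in\sigma_1\left(\mathcal{T}\right),\qquad B\left(\lambda\right)=\lambda D_2'\left(\lambda\right)\bigl[\psi^{\left(3\right)}\left(\lambda,0\right)-(\gamma-\eta^{2})\,\psi'\left(\lambda,0\right)\bigr]\ \text{ for }\lambda\in\sigma_2\left(\mathcal{T}\right).
\end{equation*}

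The main obstacle is the final nonvanishing $B\left(\lambda\right)\neq 0$. By Lemma \ref{L-2-1} we have $\lambda\neq 0$, and the factors $\psi\left(\lambda,0\right)$, resp.\ $\psi^{\left(3\right)}\left(\lambda,0\right)-(\gamma-\eta^{2})\,\psi'\left(\lambda,0\right)$, are nonzero by the branch structure recorded above, so everything comes down to $D_1'\left(\lambda\right)\neq 0$ on $\sigma_1\left(\mathcal{T}\right)$ and $D_2'\left(\lambda\right)\neq 0$ on $\sigma_2\left(\mathcal{T}\right)$ — that is, to the simplicity of all zeros of the characteristic functions $D_1$, $D_2$ (equivalently of $\Delta_1$, $\Delta_2$), which is not automatic. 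For all but finitely many eigenvalues it is already contained in the proof of Theorem \ref{T-3-3}: the Rouch\'e comparison with $\cos\rho$, resp.\ $\cos\bigl(\rho+\tfrac{\pi}{4}\bigr)$, on the small circles about $\rho=\bigl(n+\tfrac12\bigr)\pi$, resp.\ $\rho=\bigl(n+\tfrac14\bigr)\pi$, isolates exactly one simple zero for large $n$. For the remaining finitely many $\lambda$ I would argue directly, observing that $D_1'\left(\lambda\right)=0$ is precisely the condition for $\widehat{\varphi}$ to complete $x$ into an associated vector — the force-balance component of the Jordan relation $\left(\mathcal{T}-\lambda\right)x_1=x$ being exactly $D_1'\left(\lambda\right)=0$ by \eqref{303axsb} — and then checking by an explicit computation with the boundary-value problem \eqref{303xa}, \eqref{303axb}, \eqref{303bxc} (and with \eqref{308z}, \eqref{308zz} on the second branch), under the standing hypothesis $\gamma>\eta^{2}$, that no such double zero can occur; this last explicit check is the delicate point. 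With $B\left(\lambda\right)\neq 0$ secured, Lemma \ref{xumult} closes the argument as described in the first paragraph.
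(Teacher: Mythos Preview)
Your overall strategy is exactly the paper's: invoke Lemma~\ref{xumult} by constructing adjoint eigenvectors through a generating solution $\psi$ of the transposed problem, compute the pairing, reduce it to the scalar $B(\lambda)$, and express $B(\lambda)$ via the $\lambda$-derivative $\widehat{\varphi}=\partial\varphi/\partial\lambda$ to obtain $B(\lambda)=-\lambda D_1'(\lambda)\psi(\lambda,0)$ on $\sigma_1(\mathcal{T})$ and $B(\lambda)=\lambda D_2'(\lambda)\bigl[\psi^{(3)}(\lambda,0)-(\gamma-\eta^2)\psi'(\lambda,0)\bigr]$ on $\sigma_2(\mathcal{T})$. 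The intermediate manipulations you describe, and the values $3bc$, $\tfrac{3}{2}$, $2$ for the pairing constants, match the paper line for line.

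Where you go beyond the paper is in flagging the residual step $D_i'(\lambda)\neq 0$. The paper, after reaching the two displayed formulae for $B(\lambda)$, simply asserts $B(\lambda)\neq 0$ and proceeds; it offers no argument that the zeros of $D_1$, $D_2$ are simple. You are right that this is the crux and that it is not automatic. Your Rouch\'e observation handles all but finitely many eigenvalues, but your proposed treatment of the remaining ones is circular: you note that $D_1'(\lambda)=0$ is exactly the solvability condition for the Jordan equation $(\mathcal{T}-\lambda)x_1=x$, and then propose to ``check'' that no such associated vector exists---but that is precisely the semisimplicity statement you are trying to prove, so the ``explicit check'' you defer has no independent content. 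In short, your proposal reproduces the paper's argument faithfully and isolates its soft spot, but neither you nor the paper actually closes it; a genuine proof would require an argument for $D_i'(\lambda)\neq 0$ that does not presuppose the absence of associated vectors (for instance, a direct Wronskian-type identity or a separate analyticity/counting argument for the finitely many low eigenvalues).
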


\section{Completeness, minimality, and Riesz basis properties of eigenvectors}\label{sec_04}

We begin by collecting three familiar operator results from the literature which we require in the sequel. The first is due to \cite[Theorem V.8.1]{GohbergKrein1969}, the second to \cite[Lemma 2.4]{MalamudEtAl2012}, and the third to \cite[Theorem 1.1]{xu2005expansion}.
\begin{lemma}\label{thm04abc}
Let $\mathcal{K}$ be a compact skewadjoint operator on a Hilbert space $\mathscr{X}$ with $\operatorname{Ker} \mathcal{K}=\left\{0\right\}$, and let $\mathcal{S}$ be a real operator on $\mathscr{X}$ which has finite rank. Let
\begin{equation*}
\mathcal{Q}=\mathcal{K}+\kappa \mathcal{S},\quad \kappa\geq 0.
\end{equation*}
Then the root vectors of the operator $\mathcal{Q}$ are complete in $\mathscr{X}$.
\end{lemma}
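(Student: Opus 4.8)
The plan is to argue by contradiction. Let $\mathscr{L}$ be the closed linear span in $\mathscr{X}$ of the root vectors of $\mathcal{Q}$ and suppose $\mathscr{M}\coloneqq\mathscr{X}\ominus\mathscr{L}\neq\{0\}$. Each root subspace of $\mathcal{Q}$ at a nonzero eigenvalue is finite-dimensional and $\mathcal{Q}$-invariant, so $\mathscr{L}$ is $\mathcal{Q}$-invariant and $\mathscr{M}$ is invariant under $\mathcal{Q}^{*}$. I would first run the standard structure argument for compact operators: if some $\overline{\lambda}\neq 0$ were an eigenvalue of $\mathcal{Q}^{*}|_{\mathscr{M}}$ with eigenvector $y\in\mathscr{M}\setminus\{0\}$, then, since $y$ is then a genuine eigenvector of $\mathcal{Q}^{*}$ and the Riesz projections satisfy $P_{\lambda}(\mathcal{Q})^{*}=P_{\overline{\lambda}}(\mathcal{Q}^{*})$ with $P_{\overline{\lambda}}(\mathcal{Q}^{*})y=y$, the vector $x\coloneqq P_{\lambda}(\mathcal{Q})y$ would be a root vector of $\mathcal{Q}$ with $(x,y)_{\mathscr{X}}=(y,P_{\overline{\lambda}}(\mathcal{Q}^{*})y)_{\mathscr{X}}=\|y\|_{\mathscr{X}}^{2}\neq 0$, contradicting $x\in\mathscr{L}$ and $y\in\mathscr{M}=\mathscr{L}^{\perp}$. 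Hence $\sigma(\mathcal{Q}^{*}|_{\mathscr{M}})=\{0\}$, so $V\coloneqq\mathcal{Q}^{*}|_{\mathscr{M}}$ is a Volterra operator on $\mathscr{M}$, and it remains to show $\mathscr{M}=\{0\}$ (for $\kappa=0$, $\mathcal{Q}=\mathcal{K}$ is normal with trivial kernel, so $\mathscr{L}=\mathscr{X}$ by the spectral theorem; the content lies in the case $\kappa>0$).

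Here I would exploit the special structure of $\mathcal{Q}$. As $\mathcal{K}$ is skewadjoint, $\operatorname{Re}\mathcal{Q}=\tfrac{1}{2}(\mathcal{Q}+\mathcal{Q}^{*})=\kappa\operatorname{Re}\mathcal{S}$ has finite rank, so compressing to the $\mathcal{Q}^{*}$-invariant subspace $\mathscr{M}$ we find that $V$ has finite-rank real part, whereas $V$ itself differs by a finite-rank operator from the compression $-(P_{\mathscr{M}}\mathcal{K}P_{\mathscr{M}})|_{\mathscr{M}}$ of the skewadjoint injective compact operator $-\mathcal{K}$. If $\mathscr{M}$ is finite-dimensional, $V$ is nilpotent, so $\mathscr{M}\subseteq R_{0}(\mathcal{Q}^{*})$, and one rules this out by an elementary argument using $\operatorname{Ker}\mathcal{K}=\{0\}$ and $\operatorname{rank}\mathcal{S}<\infty$ (which keep the generalised kernels of $\mathcal{Q}$ and $\mathcal{Q}^{*}$ finite-dimensional and in nondegenerate duality under $(\,\cdot\,,\,\cdot\,)_{\mathscr{X}}$, contradicting $\mathscr{M}\perp\mathscr{L}$).

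The case of infinite-dimensional $\mathscr{M}$ is, I expect, the main obstacle: one must show that a Volterra operator which, like $V$, is a finite-rank perturbation of a compression of a skewadjoint injective compact operator cannot act nontrivially on an infinite-dimensional space. This is exactly what the Gohberg--Krein completeness theory delivers (\cite[Chapters IV and V]{GohbergKrein1969}): the finite-rank, real nature of $\mathcal{S}$ places $\mathcal{Q}$ within the reach of Keldysh's theorem and its refinements for operators with finite-dimensional non-Hermitian component; the symmetry of $\sigma(\mathcal{K})$ about the origin (from $\mathcal{K}$ being real and skewadjoint) together with $\operatorname{Ker}\mathcal{K}=\{0\}$ furnishes rays on which the resolvent of $\mathcal{K}$ is controlled; and the second resolvent identity with the analytic Fredholm theorem carries these estimates over to $\mathcal{Q}$ up to a meromorphic finite-rank correction, so a Phragm\'en--Lindel\"of argument on the resulting entire minorant forces the orthogonal complement of the root vectors to be $\{0\}$. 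Carrying out this resolvent-growth estimate for $V$ with the requisite uniformity is the technical crux; in the application to $\mathcal{T}$ it will be secured by the $\asymp n^{2}$ growth of the eigenvalues established in Theorem~\ref{T-3-3}.
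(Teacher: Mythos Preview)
The paper does not give its own proof of this lemma: it is quoted verbatim as \cite[Theorem~V.8.1]{GohbergKrein1969} and used as a black box. So there is no ``paper's proof'' to compare against; your proposal is an attempt to reprove a result the authors simply import.

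As a sketch of how the Gohberg--Krein argument goes, your outline is broadly on the right track (reduce to a Volterra compression on $\mathscr{M}=\mathscr{L}^{\perp}$, then show $\mathscr{M}=\{0\}$), but it has a genuine gap precisely where the work lies. In the infinite-dimensional case you write that ``this is exactly what the Gohberg--Krein completeness theory delivers'' and then invoke Keldysh/Phragm\'en--Lindel\"of in words only. That is circular: the lemma you are asked to prove \emph{is} the relevant Gohberg--Krein completeness theorem, so deferring the hard step back to \cite[Chapters~IV--V]{GohbergKrein1969} is not a proof. What is actually needed is a concrete growth estimate: one shows that for $y\in\mathscr{M}$ the entire function $\lambda\mapsto((I-\lambda\mathcal{Q})^{-1}x,y)_{\mathscr{X}}$ (for suitable $x$) has order at most one and admits polynomial bounds along enough rays (coming from the skewadjointness of $\mathcal{K}$ and the finite rank of $\mathcal{S}$), and then Phragm\'en--Lindel\"of forces it to be polynomial, hence $y=0$. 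You have named the ingredients but not assembled them.

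Your final sentence is also confused: you say the resolvent-growth estimate ``in the application to $\mathcal{T}$ will be secured by the $\asymp n^{2}$ growth of the eigenvalues established in Theorem~\ref{T-3-3}.'' The lemma is an abstract statement about arbitrary $\mathcal{K}$ and $\mathcal{S}$; its proof cannot depend on eigenvalue asymptotics of a particular operator $\mathcal{T}$ that appears only later in the paper. The required growth control must come from the hypotheses on $\mathcal{K}$ (compact, skewadjoint, injective) and $\mathcal{S}$ (finite rank) alone.
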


\begin{lemma}\label{thm04abcd}
Let $\mathcal{Q}$ be a compact operator on $\mathscr{X}$ and $\operatorname{Ker} \mathcal{Q}=\left\{0\right\}$. Then the root vectors of $\mathcal{Q}$ are minimal in $\mathscr{X}$.
\end{lemma}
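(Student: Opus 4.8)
The plan is to derive the minimality of the root-vector system directly from the Riesz--Schauder spectral theory of the compact operator $\mathcal{Q}$, reducing everything to a one-line estimate involving Riesz projections. \emph{First} I would use the hypothesis $\operatorname{Ker}\mathcal{Q}=\{0\}$ to note that $0\notin\sigma_p\left(\mathcal{Q}\right)$, so that every root vector of $\mathcal{Q}$ is attached to a \emph{nonzero} eigenvalue; by Riesz--Schauder theory the nonzero part of $\sigma\left(\mathcal{Q}\right)$ consists of isolated eigenvalues $\left\{\mu_n\right\}$ of finite algebraic multiplicities $m_n$, accumulating only at $0$. For each $n$ pick a small positively oriented circle $\Gamma_n$ enclosing $\mu_n$ and separating it from the remainder of $\sigma\left(\mathcal{Q}\right)$, and set $P_n\coloneqq\frac{1}{2\pi i}\oint_{\Gamma_n}\left(\zeta I-\mathcal{Q}\right)^{-1}d\zeta$.

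\emph{Second} I would record the standard properties of these projections: each $P_n$ is a bounded idempotent whose range is the root subspace $\mathscr{X}_n\coloneqq\operatorname{Ker}\left(\mu_n I-\mathcal{Q}\right)^{m_n}$, which is $m_n$-dimensional; $P_nP_m=0$ for $n\ne m$; and $I-P_n$ is the Riesz projection associated with the spectral set $\sigma\left(\mathcal{Q}\right)\setminus\left\{\mu_n\right\}$, so that $\operatorname{Ker}P_n$ is closed, is $\mathcal{Q}$-invariant, and contains $\mathscr{X}_m$ for every $m\ne n$ -- hence contains the closed linear span of all root subspaces other than $\mathscr{X}_n$. Equivalently, composing $P_n$ with the coordinate functionals of a fixed basis of the finite-dimensional space $\mathscr{X}_n$ produces a family of bounded functionals biorthogonal to the root-vector system, which is the classical criterion for minimality.

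\emph{Third} I would enumerate the root vectors by fixing in each $\mathscr{X}_n$ a finite basis $\left\{x_{n,1},\dots,x_{n,m_n}\right\}$; inside the finite-dimensional space $\mathscr{X}_n$ this basis is automatically minimal, so $\delta_n\coloneqq\min_j\operatorname{dist}\bigl(x_{n,j},\operatorname{span}\left\{x_{n,k}:k\ne j\right\}\bigr)>0$. To finish, fix one vector $y=x_{n_0,j_0}$ and let $z$ be an arbitrary finite linear combination of the remaining root vectors. Applying $P_{n_0}$ and using the second step, $P_{n_0}z\in\operatorname{span}\left\{x_{n_0,k}:k\ne j_0\right\}$ while $P_{n_0}y=y$, whence $\|z-y\|\ge\|P_{n_0}\|^{-1}\|P_{n_0}z-y\|\ge\|P_{n_0}\|^{-1}\delta_{n_0}>0$; passing to limits of such $z$ preserves this lower bound, so $y$ does not lie in the closed linear span of the other root vectors. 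As $y$ was arbitrary, the root-vector system is minimal in $\mathscr{X}$.

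I do not anticipate a genuine obstacle, since this is a classical fact and the argument is mostly bookkeeping. The single point that deserves care is the assertion in the second step that $\operatorname{Ker}P_n$ absorbs \emph{all} the other root subspaces and not merely the eigenspaces; this is precisely the statement that $I-P_n$ is the spectral projection of the complementary, $\mathcal{Q}$-invariant spectral set, and its range therefore contains every generalized eigenspace of $\mathcal{Q}$ at an eigenvalue $\ne\mu_n$. It is also worth emphasising that the hypothesis $\operatorname{Ker}\mathcal{Q}=\left\{0\right\}$ enters only to ensure that $0$ contributes no root vectors, so that all $P_n$ above are of finite rank and the argument never has to deal with a (possibly infinite-dimensional) null space that is not picked out by a Riesz projection.
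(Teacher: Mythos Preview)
Your argument is correct: the Riesz projections $P_n$ separate the root subspaces, and combining them with coordinate functionals on each finite-dimensional $\mathscr{X}_n$ yields a biorthogonal system, which is exactly minimality. The final distance estimate is clean and correct.

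Note, however, that the paper does not prove this lemma at all: it is quoted verbatim from the literature (\cite[Lemma~2.4]{MalamudEtAl2012}) alongside two other borrowed results, and is used as a black box in deriving Theorem~\ref{T-4-2}. So there is no ``paper's proof'' to compare against---your proposal supplies a self-contained justification where the paper simply cites. What your approach buys is independence from the cited reference; what the citation buys is brevity.
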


\begin{lemma}\label{T-6-4}
Let $\mathscr{X}$ be a separable Hilbert space and let $\mathcal{Q}$ be the infinitesimal generator of a $C_0$-semigroup $\mathbb{U}\left(t\right)$ on $\mathscr{X}$. Suppose that the following conditions hold:
\begin{enumerate}[\normalfont(i)]
\item\label{T-6-4-a} $\sigma\left(\mathcal{Q}\right)=\sigma^{(1)}\left(\mathcal{Q}\right)\cup\sigma^{(2)}\left(\mathcal{Q}\right)$ where the branch $\sigma^{(2)}\left(\mathcal{Q}\right)=\{\lambda_k{\}}^\infty_{k=1}$, consisting entirely of isolated eigenvalues of finite algebraic multiplicity;
\item\label{T-6-4-b} $\sup_{k\geq 1}m_a\left(\lambda_k\right)<\infty$ with $m_a\left(\lambda_k\right)\coloneqq\dim E\left(\lambda_k,\mathcal{Q}\right)\mathscr{X}$, the $E\left(\lambda_k,\mathcal{Q}\right)$ being the Riesz projections associated with the eigenvalues $\lambda_k$; and
\item\label{T-6-4-c} there is a real constant $\nu$ such that
\begin{equation*}
\sup\,\{\operatorname{Re}\left(\lambda\right)~|~\lambda\in\sigma^{(1)}\left(\mathcal{Q}\right)\}\leq \nu\leq \inf\,\{\operatorname{Re}\left(\lambda\right)~|~\lambda\in\sigma^{(2)}\left(\mathcal{Q}\right)\},
\end{equation*}
and
\begin{equation}\label{eq12vsep}
\inf_{k\neq j}\left|\lambda_k-\lambda_j\right|>0.
\end{equation}
\end{enumerate}
Then the following statements hold:
\begin{enumerate}[\normalfont(1)]
\item\label{T-6-4-d} There exist two $\mathbb{U}\left(t\right)$-invariant closed subspaces $\mathscr{X}_1$ and $\mathscr{X}_2$ with the properties
\begin{enumerate}[\normalfont(i)]
\item $\sigma\left(\mathcal{Q}|_{\mathscr{X}_1}\right)=\sigma^{(1)}\left(\mathcal{Q}\right)$ and $\sigma\left(\mathcal{Q}|_{\mathscr{X}_2}\right)=\sigma^{(2)}\left(\mathcal{Q}\right)$; and
\item $\left\{E\left(\lambda_k,\mathcal{Q}\right)\mathscr{X}_2\right\}^{\infty}_{k=1}$ forms a Riesz basis of subspaces for $\mathscr{X}_2$, and
\begin{equation*}
\mathscr{X}=\overline{{\mathscr{X}_1\oplus \mathscr{X}_2}}.
\end{equation*}
\end{enumerate}
\item\label{T-6-4-e} If $\sup_{k\geq 1}\left\|E\left(\lambda_k,\mathcal{Q}\right)\right\|<\infty$, then
\begin{equation*}
\mathscr{D}\left(\mathcal{Q}\right)\subset \mathscr{X}_1\oplus \mathscr{X}_2\subset \mathscr{X}.
\end{equation*}
\item\label{T-6-4-f} $\mathscr{X}$ can be decomposed into the topological direct sum
\begin{equation*}
\mathscr{X}=\mathscr{X}_1\oplus \mathscr{X}_2
\end{equation*}
if and only if $\sup_{n\geq 1}\,\bigl\|\sum\limits^n_{k=1}E\left(\lambda_k,\mathcal{Q}\right)\bigr\|<\infty$.
\end{enumerate}
\end{lemma}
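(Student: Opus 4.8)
The plan is to derive everything from a spectral decomposition of $\mathcal{Q}$ along the two branches of its spectrum, with the internal separation condition \eqref{eq12vsep} doing the decisive work on the branch $\sigma^{(2)}(\mathcal{Q})$. First I would separate the branches. Since $\mathbb{U}(t)$ is a $C_0$-semigroup, $\sigma(\mathcal{Q})$ lies in a half-plane $\operatorname{Re}(\lambda)\le\varpi$, so by \ref{T-6-4-c} the set $\sigma^{(2)}(\mathcal{Q})=\{\lambda_k\}$ sits in the vertical strip $\nu\le\operatorname{Re}(\lambda)\le\varpi$ while $\sigma^{(1)}(\mathcal{Q})$ lies in $\operatorname{Re}(\lambda)\le\nu$. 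Using the discreteness of $\sigma^{(2)}(\mathcal{Q})$ one chooses a (possibly unbounded) contour $\Gamma\subset\varrho(\mathcal{Q})$ separating the two branches; this yields bounded, mutually disjoint, finite-rank Riesz projections $E(\lambda_k,\mathcal{Q})=\frac{1}{2\pi i}\oint_{C_k}R(\lambda,\mathcal{Q})\,d\lambda$, with $C_k$ a small circle about $\lambda_k$, together with a spectral subspace $\mathscr{X}_1$ associated with $\sigma^{(1)}(\mathcal{Q})$; I then set $\mathscr{X}_2:=\overline{\operatorname{span}}\bigl(\bigcup_k E(\lambda_k,\mathcal{Q})\mathscr{X}\bigr)$. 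Since $E(\lambda_k,\mathcal{Q})$ and the projection defining $\mathscr{X}_1$ commute with $R(\lambda,\mathcal{Q})$, hence with $\mathbb{U}(t)$, both subspaces are $\mathbb{U}(t)$-invariant; $\sigma(\mathcal{Q}|_{\mathscr{X}_i})=\sigma^{(i)}(\mathcal{Q})$ holds by construction (using \ref{T-6-4-c}), and a standard argument then gives $\mathscr{X}=\overline{\mathscr{X}_1\oplus\mathscr{X}_2}$.

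The core of \ref{T-6-4-d} is the Riesz-basis-of-subspaces assertion for $\mathscr{X}_2$. From \eqref{eq12vsep} I would fix $r$ with $0<2r<\inf_{k\ne j}|\lambda_k-\lambda_j|$, so that the circles $C_k=\{\lambda:|\lambda-\lambda_k|=r\}$ are pairwise disjoint and each encloses only $\lambda_k$ from $\sigma(\mathcal{Q})$, while \ref{T-6-4-b} makes the ranks $m_a(\lambda_k)=\dim E(\lambda_k,\mathcal{Q})\mathscr{X}$ uniformly bounded. What remains is the two-sided estimate $c\sum_{k\in F}\|x_k\|^2\le\bigl\|\sum_{k\in F}x_k\bigr\|^2\le C\sum_{k\in F}\|x_k\|^2$, with $c,C>0$ independent of the finite set $F$ and of $x_k\in E(\lambda_k,\mathcal{Q})\mathscr{X}$; this is exactly the condition for $\{E(\lambda_k,\mathcal{Q})\mathscr{X}\}_k$ to be a Riesz basis of subspaces of its closed span $\mathscr{X}_2$. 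The route I would take is to compare $\mathcal{Q}|_{\mathscr{X}_2}$ with the orthogonal model operator carrying the same eigenvalues $\lambda_k$ and eigenprojections of the same finite ranks arranged orthogonally, and to show that the linear map carrying that orthogonal decomposition onto $\{E(\lambda_k,\mathcal{Q})\mathscr{X}\}_k$ is bounded with bounded inverse; the off-diagonal couplings $\left(E(\lambda_j,\mathcal{Q})x_j,E(\lambda_k,\mathcal{Q})x_k\right)_{\mathscr{X}}$ are estimated through the contour representation and the resolvent bound supplied by the semigroup property. It is the uniform control of these couplings out of the mere separation of the $C_k$ that I expect to be the main obstacle, and this is where the substance of the proof lies.

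Finally, for \ref{T-6-4-e} I would take $x\in\mathscr{D}(\mathcal{Q})$, substitute $R(\lambda,\mathcal{Q})x=\lambda^{-1}x+\lambda^{-1}R(\lambda,\mathcal{Q})\mathcal{Q}x$ into the Cauchy integral for $E(\lambda_k,\mathcal{Q})x$, and use $|\lambda_k|\to\infty$ together with $\sup_k\|E(\lambda_k,\mathcal{Q})\|<\infty$ and the semigroup resolvent estimate to get $\|E(\lambda_k,\mathcal{Q})x\|=\mathcal{O}\,(|\lambda_k|^{-1})$; then $\sum_k E(\lambda_k,\mathcal{Q})x$ converges, and subtracting it together with the $\mathscr{X}_1$-component of $x$ leaves a vector with empty spectrum, hence $0$, so $x\in\mathscr{X}_1\oplus\mathscr{X}_2$. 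For \ref{T-6-4-f} I would invoke the general principle that the algebraic sum $\mathscr{X}_1\oplus\mathscr{X}_2$ is closed — and thus equal to $\mathscr{X}$ — precisely when the projections onto $\mathscr{X}_1\oplus\bigoplus_{k\le n}E(\lambda_k,\mathcal{Q})\mathscr{X}$ along the complementary part are uniformly bounded in $n$: the ``only if'' direction is the uniform boundedness principle applied to these projections, and the ``if'' direction follows by letting $n\to\infty$ and appealing to \ref{T-6-4-d}.
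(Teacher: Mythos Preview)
The paper does not prove this lemma at all: it is quoted verbatim as a known result from \cite[Theorem 1.1]{xu2005expansion}, alongside two other lemmas drawn from the literature, and is used as a black box in the proof of Theorem~\ref{T-4-3}. So there is no ``paper's own proof'' to compare against; your proposal is an attempt to reconstruct the argument behind a cited theorem.

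That said, your sketch has real gaps that would need to be addressed if you wanted a self-contained proof. First, the construction of $\mathscr{X}_1$ is not explained: the branch $\sigma^{(1)}(\mathcal{Q})$ lies in a left half-plane and need not be bounded or discrete, so there is no contour-integral projection onto it in general; in the cited source $\mathscr{X}_1$ is obtained indirectly, essentially as the part of $\mathscr{X}$ left over after removing $\mathscr{X}_2$, and the equality $\sigma(\mathcal{Q}|_{\mathscr{X}_1})=\sigma^{(1)}(\mathcal{Q})$ requires a separate argument. Second, the Riesz-basis-of-subspaces estimate is the heart of the result, and ``compare with the orthogonal model and bound the off-diagonal couplings through the resolvent'' is not enough: the separation condition \eqref{eq12vsep} alone does not give a uniform bound on $\|R(\lambda,\mathcal{Q})\|$ along the circles $C_k$, and this is exactly where the semigroup hypothesis and the vertical-strip confinement of $\sigma^{(2)}(\mathcal{Q})$ must be exploited (via Laplace-transform resolvent estimates on vertical lines) to get the required quadratic closeness. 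Your remarks on \ref{T-6-4-e} and \ref{T-6-4-f} are in the right spirit, but the decay $\|E(\lambda_k,\mathcal{Q})x\|=\mathcal{O}(|\lambda_k|^{-1})$ is not by itself summable, so convergence of $\sum_k E(\lambda_k,\mathcal{Q})x$ again relies on the Riesz-basis property already established in \ref{T-6-4-d}.
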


\subsection{Completeness and minimality}
We shall now show that the root vectors of $\mathcal{T}$, all of which shown in Theorem \ref{T-3-4} to be eigenvectors, are minimal complete in $\mathscr{X}$. The result can be obtained by application of a similar method to the one in \cite{MahinzaeimEtAl2021} to construct the inverse operator $\mathcal{T}^{-1}$, which we know by Lemma \ref{L-2-1} to exist and be compact, and subsequent application of Lemmas \ref{thm04abc} and \ref{thm04abcd}. To this end, we need first of all the following theorem.

\begin{theorem}\label{T-4-1}
Let $\mathcal{T}_{0}$ be the skewadjoint part of $\mathcal{T}$, i.e., $\mathcal{T}\coloneqq \mathcal{A}+\mathcal{B}$ as defined by \eqref{204}--\eqref{205} with $\kappa=0$. Then $\mathcal{T}^{-1}=\mathcal{T}_{0}^{-1}+\kappa \mathcal{S}$, where $\mathcal{S}$ is a real operator on $\mathscr{X}$ which is of finite rank.
\end{theorem}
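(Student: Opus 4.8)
The plan is to compute the two inverses at zero and subtract. By Lemma~\ref{L-2-1}, whose proof is valid for every $\kappa\geq0$ and in particular for $\kappa=0$, both $\mathcal{T}^{-1}$ and $\mathcal{T}_0^{-1}$ exist and are compact on $\mathscr{X}$. Fix an arbitrary $\widetilde{x}=\{(\widetilde{w}_k,\widetilde{v}_k)\}^3_{k=1}\in\mathscr{X}$ and set $x=\mathcal{T}^{-1}\widetilde{x}=\{(w_k,v_k)\}^3_{k=1}$ and $x_0=\mathcal{T}_0^{-1}\widetilde{x}=\{(w_{0,k},v_{0,k})\}^3_{k=1}$. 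Reading off the first coordinate of $\mathcal{T}x=\widetilde{x}$ and of $\mathcal{T}_0x_0=\widetilde{x}$ from \eqref{213} (taken with $\lambda=0$) gives $v_k=v_{0,k}=\widetilde{w}_k$, whence $x-x_0=\{(y_k,0)\}^3_{k=1}$ with $y_k\coloneqq w_k-w_{0,k}$.

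The next step is to subtract the two coordinate systems and identify the boundary-value problem satisfied by $\{y_k\}$. Since $v'_k=v'_{0,k}=\widetilde{w}'_k$, the gyroscopic terms $2\beta\eta v'_k$ cancel on subtraction, as do the coupling terms $\beta\eta v_k(0)=\beta\eta\widetilde{w}_k(0)$ in the force-balance condition; one is left with the constant-coefficient system $y_k^{(4)}-(\gamma-\eta^2)\,y''_k=0$ on $(0,1)$, together with $y_k(1)=y''_k(1)=0$, $y_j(0)=y_k(0)$, $\sum^3_{k=1}[y_k^{(3)}(0)-(\gamma-\eta^2)\,y'_k(0)]=0$, and $y''_k(0)-\alpha y'_k(0)=\kappa\widetilde{w}'_k(0)$ for $k=1,2,3$. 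Thus the only inhomogeneity is the triple $\kappa\widetilde{w}'_k(0)$. This is a linear system of twelve scalar conditions for the twelve coefficients in $y_k(s)=A_k+B_ks+C_k\cosh(\sqrt{\gamma-\eta^2}\,s)+D_k\sinh(\sqrt{\gamma-\eta^2}\,s)$; its homogeneous counterpart, obtained by setting each $\widetilde{w}'_k(0)=0$, is exactly the problem \eqref{208}, which by the argument in the proof of Lemma~\ref{L-2-1} has only the trivial solution. Hence the system is uniquely solvable, and its solution depends linearly on the numbers $\widetilde{w}'_1(0),\widetilde{w}'_2(0),\widetilde{w}'_3(0)$.

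It follows that the assignment $\widetilde{x}\mapsto x-x_0$ factors through $\mathbb{C}^3$. Let $\mathcal{L}\colon\mathbb{C}^3\to\mathscr{X}$ be the linear map sending $(d_1,d_2,d_3)$ to $\{(z_k,0)\}^3_{k=1}$, where $\{z_k\}$ solves the above system with $\kappa\widetilde{w}'_k(0)$ replaced by $d_k$; $\mathcal{L}$ is necessarily bounded, and by linearity $x-x_0=\kappa\,\mathcal{L}(\widetilde{w}'_1(0),\widetilde{w}'_2(0),\widetilde{w}'_3(0))$. Since $w\mapsto w'(0)$ is a bounded functional on $\bm{H}^2_*(0,1)$, the operator $\mathcal{S}\widetilde{x}\coloneqq\mathcal{L}(\widetilde{w}'_1(0),\widetilde{w}'_2(0),\widetilde{w}'_3(0))$ is bounded on $\mathscr{X}$ and of rank at most three, and $\mathcal{T}^{-1}\widetilde{x}=\mathcal{T}_0^{-1}\widetilde{x}+\kappa\mathcal{S}\widetilde{x}$ for every $\widetilde{x}$. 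Finally $\mathcal{S}$ is real: for real $\widetilde{x}$ the numbers $\widetilde{w}'_k(0)$ are real and the boundary-value problem defining $\mathcal{L}$ has real coefficients, so its unique solution is real-valued.

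I expect the only genuine work to lie in the second paragraph, namely verifying carefully that on subtraction all contributions of $\mathcal{B}$ and of the coupling term $\beta\eta v_k(0)$ drop out, so that the data entering the problem for $\{y_k\}$ is exactly the triple $\kappa\widetilde{w}'_k(0)$, and that the associated homogeneous problem coincides with \eqref{208}. Once this bookkeeping is in place, the boundedness, finite-rank, and reality claims are formal.
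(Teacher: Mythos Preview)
Your argument is correct and establishes the theorem. The approach, however, is genuinely different from the paper's. The paper computes $\mathcal{T}^{-1}$ directly: it integrates the differential equations in \eqref{213} twice, writes the general solution in terms of the unknown constants $c_k$ and the unknown boundary values $w_k^{(3)}(0)-(\gamma-\eta^2)\,w'_k(0)+\beta\eta v_k(0)$, and then solves the resulting linear algebraic system coming from the remaining vertex conditions; the $\kappa$-dependence is tracked through this computation and is seen to enter only through the scalars $\widetilde{w}'_k(0)$, producing an explicit representation of $\mathcal{S}$ whose range lies in the span of two fixed functions $\phi_1,\phi_2$ on each edge. Your route avoids all of this by subtracting the two inverse problems so that the inhomogeneous right-hand side, the gyroscopic term $2\beta\eta v'_k$, and the coupling term $\beta\eta v_k(0)$ all cancel, reducing the question to a homogeneous ODE for $y_k$ with a single nonhomogeneous vertex condition $y''_k(0)-\alpha y'_k(0)=\kappa\widetilde{w}'_k(0)$; uniqueness then comes for free from \eqref{208}, and the finite-rank claim follows because the data lives in $\mathbb{C}^3$. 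What your approach buys is brevity and conceptual clarity; what the paper's direct computation buys is an explicit formula for $\mathcal{S}$ (and a sharper rank bound of $2$, versus your bound of at most $3$), although for the application via Lemma~\ref{thm04abc} only finite rank is needed.
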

\begin{proof}
Consider the problem \eqref{213}. If we integrate the differential equation twice from $0$ to $1$, making use of the vertex conditions $w_k \left(1\right)=w_k''\left(1\right) = 0$, $k=1,2,3$, we arrive at
\begin{equation}\label{402}
\begin{split}
&w_k'' \left(s\right)-(\gamma-\eta^2)\,w_k \left(s\right)+\left[w_{k}^{\left(3\right)}\left(0\right)-(\gamma-\eta^2)\, w_{k}'\left(0\right)+\beta \eta v_k\left(0\right)\right]\\
&\hspace{0.4\linewidth}\times\left(1-s\right)=-\widetilde{V}_{k}\left(s\right)-\widetilde{W}_{k}\left(s\right),\quad k=1,2,3,
\end{split}
\end{equation}
with the integral terms
\begin{equation*}
\widetilde{V}_{k}\left(s\right)= -\int^1_sdt\int^t_0\widetilde{v}_k\left(r\right)dr, \quad
\widetilde{W}_{k}\left(s\right)= -2\beta \eta\int^1_sdt\int^t_0\widetilde{w}'_k\left(r\right)dr-\beta\eta\int^1_s\widetilde{w}_k\left(0\right)dt.
\end{equation*}
For brevity, let us put $\widetilde{\left(V;W\right)}_k \left(s\right) \coloneqq\widetilde{V}_k \left(s\right)+\widetilde{W}_k\left(s\right)$ in this proof. The solutions of \eqref{402} then take the form
\begin{equation}\label{403}
\begin{split}
w_k \left(s\right)&= c_k \sinh\sqrt{\gamma-\eta^2} \left(1-s\right)+\left[w_{k}^{\left(3\right)}\left(0\right)-(\gamma-\eta^2) \,w_{k}'\left(0\right)+\beta \eta v_k\left(0\right)\right]\\
&\qquad\times\frac{1}{\sqrt{\gamma-\eta^2}}\int^1_s\left(1-r\right)\sinh\sqrt{\gamma-\eta^2}\left(s-r\right) dr\\
&\qquad+\frac{1}{\sqrt{\gamma-\eta^2}}\int^1_s\sinh\sqrt{\gamma-\eta^2}\left(s-r\right)\widetilde{\left(V;W\right)}_k \left(r\right) dr,\quad k=1,2,3,
\end{split}
\end{equation}
with arbitrary constants $c_k$. Thus the following equations are obtained
\begin{equation*}
\begin{split}
w_k \left(0\right)&= c_k \sinh\sqrt{\gamma-\eta^2}-\left[w_{k}^{\left(3\right)}\left(0\right)-(\gamma-\eta^2) \,w_{k}'\left(0\right)+\beta \eta v_k\left(0\right)\right]\\
&\qquad\times\frac{1}{\sqrt{\gamma-\eta^2}}\int^1_0\left(1-r\right)\sinh\sqrt{\gamma-\eta^2} \, r \,dr\\
&\qquad-\frac{1}{\sqrt{\gamma-\eta^2}}\int^1_0\sinh\sqrt{\gamma-\eta^2} \, r\,\widetilde{\left(V;W\right)}_k \left(r\right) dr,\\
w_k' \left(0\right)&= -c_k \sqrt{\gamma-\eta^2}\cosh\sqrt{\gamma-\eta^2}+\left[w_{k}^{\left(3\right)}\left(0\right)-(\gamma-\eta^2)\, w_{k}'\left(0\right)+\beta \eta v_k\left(0\right)\right]\\
&\qquad\times\int^1_0\left(1-r\right)\cosh\sqrt{\gamma-\eta^2} \, r\, dr+\int^1_0\cosh\sqrt{\gamma-\eta^2} \,r\,\widetilde{\left(V;W\right)}_k \left(r\right) dr,\\
w_k'' \left(0\right)&= c_k\,(\gamma-\eta^2) \sinh\sqrt{\gamma-\eta^2}-\left[w_{k}^{\left(3\right)}\left(0\right)-(\gamma-\eta^2)\, w_{k}'\left(0\right)+\beta \eta v_k\left(0\right)\right]\\
&\qquad\times\left[\sqrt{\gamma-\eta^2}\int^1_0\left(1-r\right)\sinh\sqrt{\gamma-\eta^2} \, r \,dr+1\right]\\
&\qquad -\sqrt{\gamma-\eta^2}\int^1_0\sinh\sqrt{\gamma-\eta^2} \, r\,\widetilde{\left(V;W\right)}_k \left(r\right) dr-\widetilde{\left(V;W\right)}_k \left(0\right)
\end{split}
\end{equation*}
for $k=1,2,3$. If we sum each of these three equations over $k=1,2,3$, using the force balance condition $\sum^3_{k=1}\,\bigl[w^{\left(3\right)}_k\left(0\right)-(\gamma-\eta^2)\, w_{k}'\left(0\right)+\beta\eta v_k\left(0\right)\bigr]=0$, we obtain
\begin{equation}\label{407a}
\left\{\begin{split}
\sum_{k=1}^3w_k \left(0\right)&= \sinh\sqrt{\gamma-\eta^2}\,\sum_{k=1}^3c_k -\frac{1}{\sqrt{\gamma-\eta^2}}\int^1_0\sinh\sqrt{\gamma-\eta^2} \,r\sum_{k=1}^3\widetilde{\left(V;W\right)}_k \left(r\right) dr\\
\sum_{k=1}^3w_k' \left(0\right)&= -\sqrt{\gamma-\eta^2}\cosh\sqrt{\gamma-\eta^2}\sum_{k=1}^3c_k
+\int^1_0\cosh\sqrt{\gamma-\eta^2} \, r\sum_{k=1}^3\widetilde{\left(V;W\right)}_k \left(r\right) dr\\
\sum_{k=1}^3w_k'' \left(0\right)&= (\gamma-\eta^2)\, \sinh\sqrt{\gamma-\eta^2}\sum_{k=1}^3c_k -\sqrt{\gamma-\eta^2}\int^1_0\sinh\sqrt{\gamma-\eta^2} \, r\sum_{k=1}^3\widetilde{\left(V;W\right)}_k \left(r\right) dr\\
&\qquad-\sum_{k=1}^3\widetilde{\left(V;W\right)}_k \left(0\right).
\end{split}\right.
\end{equation}
Using the vertex condition $w''_k\left(0\right)-\alpha w'_k\left(0\right)-\kappa v'_k\left(0\right)=0$, $k=1,2,3$, taking into account that the $v_k=\widetilde{w}_k$, and summing over $k=1,2,3$ we find, again using $\sum^3_{k=1}\,\bigl[w^{\left(3\right)}_k\left(0\right)-(\gamma-\eta^2)\, w_{k}'\left(0\right)+\beta\eta v_k\left(0\right)\bigr]=0$, that
\begin{align*}
\sum_{k=1}^3c_k&=\frac{\sum_{k=1}^3\left[\widetilde{\left(V;W\right)}_k \left(0\right)
+\int^1_0\bigl(\sqrt{\gamma-\eta^2}\sinh\sqrt{\gamma-\eta^2} \,r+\alpha\cosh\sqrt{\gamma-\eta^2}\, r\bigr)\,\widetilde{\left(V;W\right)}_k \left(r\right) dr\right]}{(\gamma-\eta^2)\, \sinh\sqrt{\gamma-\eta^2}+\alpha\sqrt{\gamma-\eta^2}\cosh\sqrt{\gamma-\eta^2}}\\
&\qquad+\frac{\kappa\sum_{k=1}^3 \widetilde{w}_{k}'\left(0\right)
}{(\gamma-\eta^2)\, \sinh\sqrt{\gamma-\eta^2}+\alpha\sqrt{\gamma-\eta^2}\cosh\sqrt{\gamma-\eta^2}}\\
&\coloneqq b\left(\widetilde{w},\widetilde{v}\right)+\kappa H\left(\widetilde{w}\right),
\end{align*}
where
\begin{equation*}
 H\left(\widetilde{w}\right)\coloneqq\frac{\sum_{k=1}^3\widetilde{w}_{k}'\left(0\right)}{(\gamma-\eta^2)\, \sinh\sqrt{\gamma-\eta^2}+\alpha\sqrt{\gamma-\eta^2}\cosh\sqrt{\gamma-\eta^2}}.
\end{equation*}
Clearly the first equation in \eqref{407a} can be written as
\begin{equation*}
\sum_{k=1}^3w_k \left(0\right)= \left(b\left(\widetilde{w},\widetilde{v}\right)+\kappa H\left(\widetilde{w}\right)\right) \sinh\sqrt{\gamma-\eta^2} -\frac{1}{\sqrt{\gamma-\eta^2}}\int^1_0\sinh\sqrt{\gamma-\eta^2} \, r\sum_{k=1}^3\widetilde{\left(V;W\right)}_k \left(r\right) dr.
\end{equation*}
Since $w_j\left(0\right)=w_k\left(0\right)\equiv w\left(0\right)$, $j,k=1,2,3$, it follows that
\begin{align*}
w\left(0\right)&=\frac{1}{3}\left[\left(b\left(\widetilde{w},\widetilde{v}\right)+\kappa H\left(\widetilde{w}\right)\right) \sinh\sqrt{\gamma-\eta^2} -\frac{1}{\sqrt{\gamma-\eta^2}}\int^1_0\sinh\sqrt{\gamma-\eta^2}r \sum_{k=1}^3\widetilde{\left(V;W\right)}_k \left(r\right) dr\right]\\
&\coloneqq c\left(\widetilde{w},\widetilde{v}\right)+\kappa G\left(\widetilde{w}\right)
\end{align*}
where
\begin{equation*}
G\left(\widetilde{w}\right)\coloneqq \frac{H\left(\widetilde{w}\right) }{3} \sinh\sqrt{\gamma-\eta^2} .
\end{equation*}
Thus we have
\begin{equation*}
\sum_{k=1}^3c_k=b\left(\widetilde{w},\widetilde{v}\right)+\kappa H\left(\widetilde{w}\right),\quad
w\left(0\right)=c\left(\widetilde{w},\widetilde{v}\right)+\kappa G\left(\widetilde{w}\right),
\end{equation*}
and hence the following algebraic equations for $c_k$, $w^{\left(3\right)}_k\left(0\right)-(\gamma-\eta^2)\, w_{k}'\left(0\right)+\beta\eta v_k\left(0\right)$:
\begin{equation}\label{408}
\left\{\begin{split}
&c_k \sinh\sqrt{\gamma-\eta^2} -\frac{w_{k}^{\left(3\right)}\left(0\right)-(\gamma-\eta^2)\, w_{k}'\left(0\right)+\beta \eta v_k\left(0\right)}{\sqrt{\gamma-\eta^2}}\int^1_0\left(1-r\right)\sinh\sqrt{\gamma-\eta^2} \, r\, dr\\
&\hspace{0.2\linewidth}=c\left(\widetilde{w},\widetilde{v}\right)+\frac{1}{\sqrt{\gamma-\eta^2}}\int^1_0\sinh\sqrt{\gamma-\eta^2} \, r\,\widetilde{\left(V;W\right)}_k \left(r\right) dr,\\
&c_k\left(\sinh\sqrt{\gamma-\eta^2}+\frac{\alpha \cosh\sqrt{\gamma-\eta^2}}{\sqrt{\gamma-\eta^2}} \right)-\frac{w_{k}^{\left(3\right)}\left(0\right)-(\gamma-\eta^2)\, w_{k}'\left(0\right)+\beta \eta v_k\left(0\right)}{\sqrt{\gamma-\eta^2}}\\
&\hspace{0.1\linewidth}\times\left[\int^1_0\left(1-r\right)\left(\sinh\sqrt{\gamma-\eta^2} \, r+\frac{\alpha\cosh\sqrt{\gamma-\eta^2} \, r}{\sqrt{\gamma-\eta^2}}\right) dr+1 \right]\\
&\hspace{0.2\linewidth}=\frac{1}{\sqrt{\gamma-\eta^2}}\int^1_0\left(\sinh\sqrt{\gamma-\eta^2} \, r+
\frac{\alpha\cosh\sqrt{\gamma-\eta^2}\,r}{\sqrt{\gamma-\eta^2}} \right)\widetilde{\left(V;W\right)}_k \left(r\right) dr\\
&\hspace{0.2\linewidth}\qquad+\frac{\widetilde{\left(V;W\right)}_k \left(0\right)+\kappa \widetilde{w}_{k}'\left(0\right)}{\gamma-\eta^2}
\end{split}\right.
\end{equation}
for $k=1,2,3$. We observe that the determinant of the coefficient matrix formed by \eqref{408} does not vanish,
\begin{equation*}
\left|\begin{matrix}
 \sinh\sqrt{\gamma-\eta^2}& -\int^1_0\left(1-r\right)\sinh\sqrt{\gamma-\eta^2} \, r\, dr\\
\alpha\cosh\sqrt{\gamma-\eta^2}
&-\alpha\int^1_0\left(1-r\right)\cosh\sqrt{\gamma-\eta^2} \, r \, dr-1 \\
\end{matrix}\right|\neq 0,
\end{equation*}
whence there are unique solution pairs $c_k$, $w^{\left(3\right)}_k\left(0\right)-(\gamma-\eta^2)\, w_{k}'\left(0\right)+\beta\eta v_k\left(0\right)$ of the form
\begin{equation*}
\begin{aligned}
c_k&=c_k\left(\widetilde{w},\widetilde{v}\right)+\kappa d_k\left(\widetilde{w}\right),& k&=1,2,3,\\
w_{k}^{\left(3\right)}\left(0\right)-(\gamma-\eta^2)\, w_{k}'\left(0\right)+\beta \eta v_k\left(0\right)&=M_k\left(\widetilde{w}, \widetilde{v}\right)+\kappa N_k\left(\widetilde{w}\right),& k&=1,2,3.
\end{aligned}
\end{equation*}
Inserting these into \eqref{403}, we find
\begin{align*}
w_k \left(s\right)&= c_k\left(\widetilde{w},\widetilde{v}\right) \sinh\sqrt{\gamma-\eta^2} \left(1-s\right) + \frac{M_k\left(\widetilde{w},\widetilde{v}\right)}{\sqrt{\gamma-\eta^2}}\int^1_s\left(1-r\right)\sinh\sqrt{\gamma-\eta^2}\left(s-r\right) dr\\
&\qquad+\frac{1}{\sqrt{\gamma-\eta^2}}\int^1_s\sinh\sqrt{\gamma-\eta^2}\left(s-r\right)\widetilde{\left(V;W\right)}_k \left(r\right) dr\\
&\qquad+\kappa\left[d_k\left(\widetilde{w}\right) \sinh\sqrt{\gamma-\eta^2} \left(1-s\right) +\frac{N_k\left(\widetilde{w}\right)}{\sqrt{\gamma-\eta^2}}\int^1_s\left(1-r\right)\sinh\sqrt{\gamma-\eta^2}\left(s-r\right) dr\right]\\
&\coloneqq w_k \left(s,\widetilde{w},\widetilde{v}\right)+\kappa \left(d_k\left(\widetilde{w}\right)\phi_1 \left(s\right)+N_k\left(\widetilde{w}\right)\phi_2\left(s\right)\right)
\end{align*}
where $\phi_1\left(s\right)=\sinh\sqrt{\gamma-\eta^2} \left(1-s\right) $, $\phi_2\left(s\right)=\frac{1}{\sqrt{\gamma-\eta^2}}\int^1_s\left(1-r\right)\sinh\sqrt{\gamma-\eta^2}\left(s-r\right) dr$. Therefore we have
\begin{align*}
\mathcal{T}^{-1}\left\{\left(\widetilde{w}_k,\widetilde{v}_k\right)\right\}_{k=1}^3&=\left\{\left(w_k,v_k\right)\right\}_{k=1}^3\\
&=\left\{\left(w_k\left(s,\widetilde{w},\widetilde{v}\right),v_k\right)\right\}_{k=1}^3+\kappa\left\{\left(d_k\left(\widetilde{w}\right)\phi_1+N_k\left(\widetilde{w}\right)\phi_2,0\right)\right\}_{k=1}^3\\
&=\mathcal{T}_{0}^{-1}\left\{\left(\widetilde{w}_k,\widetilde{v}_k\right)\right\}_{k=1}^3+\kappa \mathcal{S}\left\{\left(\widetilde{w}_k,\widetilde{v}_k\right)\right\}_{k=1}^3,
\end{align*}
where $\mathcal{T}^{-1}_0$ is compact and skewadjoint and $\mathcal{S}$ is of rank $2$. In particular if $\mathcal{T}x=y$, $y\in \mathscr{X}$, $x\in \mathscr{D}\left(\mathcal{A}\right)$, it is easily seen that
\begin{equation*}
\operatorname{Re} \left( \mathcal{T}x,x\right)_\mathscr{X} = \operatorname{Re}\, ( \mathcal{T}^{-1}y,y{)}_\mathscr{X} = \operatorname{Re} \,( (\mathcal{T}_0^{-1}+\kappa \mathcal{S})\,y,y{)}_\mathscr{X} = \kappa \operatorname{Re} \,( \mathcal{S}y,y{)}_\mathscr{X} =\kappa \,( \mathcal{S}y,y{)}_\mathscr{X}.
 \end{equation*}
This completes the proof of the theorem.
\end{proof}
Combined with Lemmas \ref{thm04abc} and \ref{thm04abcd}, if we identify $\mathcal{Q}$ with $\mathcal{T}^{-1}$ (recall from Lemma \ref{L-2-1} that $0\in\varrho\left(\mathcal{T}\right)$ and $\mathcal{T}^{-1}$ is compact) and note that the geometric eigenspaces of $\mathcal{Q}$ and $\mathcal{T}$ corresponding, respectively, to eigenvalues $\lambda$ and $\lambda^{-1}$ coincide, Theorem \ref{T-4-1} leads to the following result.
\begin{theorem}\label{T-4-2}
The eigenvectors of $\mathcal{T}$ are minimal complete in $\mathscr{X}$.
\end{theorem}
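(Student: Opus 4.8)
The plan is to deduce everything from the structure of the compact operator $\mathcal{T}^{-1}$ furnished by Theorem \ref{T-4-1}, combined with Lemmas \ref{thm04abc} and \ref{thm04abcd}. First I would identify $\mathcal{Q}$ with $\mathcal{T}^{-1}$: this is legitimate since, by Lemma \ref{L-2-1}, $0\in\varrho\left(\mathcal{T}\right)$ and $\mathcal{T}^{-1}$ is compact, while the fact that $\mathcal{T}$ is a bijection of $\mathscr{D}\left(\mathcal{A}\right)$ onto $\mathscr{X}$ forces $\mathcal{T}^{-1}$ to be injective, i.e.\ $\operatorname{Ker}\mathcal{T}^{-1}=\left\{0\right\}$. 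The same remark applied to $\mathcal{T}_0$, which by Lemma \ref{L-2-1} is skewadjoint and, by the argument given there now with $\kappa=0$, boundedly invertible, shows that $\mathcal{T}_0^{-1}$ is a compact skewadjoint operator with $\operatorname{Ker}\mathcal{T}_0^{-1}=\left\{0\right\}$.

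I would then apply Lemma \ref{thm04abc} with $\mathcal{K}=\mathcal{T}_0^{-1}$ and with $\mathcal{S}$ the real, rank-two operator of Theorem \ref{T-4-1}, so that $\mathcal{Q}=\mathcal{T}^{-1}=\mathcal{K}+\kappa\mathcal{S}$: all hypotheses are met, and one concludes that the root vectors of $\mathcal{T}^{-1}$ are complete in $\mathscr{X}$. Minimality of the same family then follows immediately from Lemma \ref{thm04abcd}, since $\mathcal{T}^{-1}$ is compact with trivial kernel.

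It remains to transfer these properties from $\mathcal{T}^{-1}$ to $\mathcal{T}$. Here I would use that, $0$ being in $\varrho\left(\mathcal{T}\right)$, the map $\lambda\mapsto\lambda^{-1}$ is a bijection of $\sigma\left(\mathcal{T}\right)$ onto $\sigma\left(\mathcal{T}^{-1}\right)\setminus\left\{0\right\}$, with the geometric eigenspace of $\mathcal{T}^{-1}$ at $\lambda^{-1}$ equal to that of $\mathcal{T}$ at $\lambda$, and that $0$, although it lies in the spectrum of the compact operator $\mathcal{T}^{-1}$, is not an eigenvalue and so contributes no root vectors. By Theorem \ref{T-3-4} every eigenvalue of $\mathcal{T}$ is semisimple, whence $\mathcal{T}$ has no associated vectors; the corresponding statement for $\mathcal{T}^{-1}$ follows from the equality of eigenspaces. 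Consequently the root vectors of $\mathcal{T}^{-1}$ are precisely the eigenvectors of $\mathcal{T}$, and completeness and minimality carry over verbatim to the eigenvector system of $\mathcal{T}$, as required.

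The analytic substance is entirely contained in Theorem \ref{T-4-1} and the cited abstract lemmas, so the only point I expect to require care is the last step: the exact matching of the root vectors of $\mathcal{T}^{-1}$ with the eigenvectors of $\mathcal{T}$, in particular checking that nothing is gained or lost at the spectral value $0$ and that semisimplicity is inherited in both directions.
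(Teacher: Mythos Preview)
Your proposal is correct and follows essentially the same route as the paper: identify $\mathcal{Q}=\mathcal{T}^{-1}$, invoke Theorem \ref{T-4-1} to write it as $\mathcal{T}_0^{-1}+\kappa\mathcal{S}$, apply Lemmas \ref{thm04abc} and \ref{thm04abcd}, and transfer via the coincidence of eigenspaces. The paper's own argument is in fact terser than yours and leaves the transfer step implicit; your added care about $\operatorname{Ker}\mathcal{T}^{-1}=\{0\}$, the spectral point $0$, and the use of Theorem \ref{T-3-4} to match root vectors with eigenvectors only makes the reasoning more explicit. One minor point: semisimplicity of $\mathcal{T}^{-1}$ does not follow from equality of \emph{geometric} eigenspaces alone, but rather from the standard identity $(\lambda^{-1}I-\mathcal{T}^{-1})=-\lambda^{-1}\mathcal{T}^{-1}(\lambda I-\mathcal{T})$, which gives $\operatorname{Ker}(\lambda^{-1}I-\mathcal{T}^{-1})^k=\operatorname{Ker}(\lambda I-\mathcal{T})^k$ for all $k$ and hence equality of the full root subspaces.
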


\subsection{Riesz basis property}
There remains the verification of the Riesz basis property of the eigenvectors. We use Lemma \ref{T-6-4} to prove the following theorem.

\begin{theorem}\label{T-4-3}
There exists a sequence of eigenvectors corresponding to a properly enumerated sequence of eigenvalues of $\mathcal{T}$ which is a Riesz basis for $\mathscr{X}$.
\end{theorem}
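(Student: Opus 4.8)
The plan is to verify the hypotheses of Lemma~\ref{T-6-4} with $\mathcal{Q}=\mathcal{T}$, split the spectrum exactly as in Theorem~\ref{T-3-3} into $\sigma^{(1)}\left(\mathcal{T}\right)$ and $\sigma^{(2)}\left(\mathcal{T}\right)$ (up to a finite reshuffling discussed below), and then transfer the Riesz basis conclusion back through the minimality--completeness result of Theorem~\ref{T-4-2}. First I would observe that, by Lemma~\ref{L-2-1}, $\mathcal{T}$ generates a $C_0$-semigroup on the separable Hilbert space $\mathscr{X}$, so the standing assumption of Lemma~\ref{T-6-4} holds. Condition~\ref{T-6-4-a} is immediate from Theorem~\ref{T-3-3}: the spectrum is a discrete set of normal eigenvalues, and we may take $\sigma^{(2)}\left(\mathcal{Q}\right)$ to be, say, the branch $\sigma^{(2)}\left(\mathcal{T}\right)$ enumerated as $\{\lambda^{(2)}_{\pm n}\}$. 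Condition~\ref{T-6-4-b} follows from Theorem~\ref{T-3-4}, since every eigenvalue in $\sigma^{(2)}\left(\mathcal{T}\right)$ has algebraic multiplicity exactly $2$ (semisimple, so $m_a=m_g=2$), whence $\sup_k m_a\left(\lambda_k\right)=2<\infty$.

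The substantive work is condition~\ref{T-6-4-c}, which has two parts. For the separation estimate \eqref{eq12vsep}, I would use the asymptotic representation from Theorem~\ref{T-3-3}: along $\sigma^{(2)}\left(\mathcal{T}\right)$ we have $\operatorname{Im}\,(\lambda^{(2)}_{\pm n})=\pm\big((\tau^{(2)}_n)^2+\text{const}\big)+\mathcal{O}\,(\tau_n^{-1})$ with $\tau^{(2)}_n=\left(n+\tfrac14\right)\pi$, so consecutive imaginary parts differ by $\big|(\tau^{(2)}_{n+1})^2-(\tau^{(2)}_n)^2\big|+\mathcal{O}\,(1)=\mathcal{O}\,(n)\to\infty$, while $\operatorname{Re}$ stays bounded; hence the eigenvalues are asymptotically spaced apart by an amount tending to infinity. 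The finitely many eigenvalues that violate this asymptotic regime are distinct (being isolated points of a discrete set), so $\inf_{k\neq j}|\lambda_k-\lambda_j|>0$ over the whole branch. For the separating-line part, I would note that asymptotically $\operatorname{Re}\,(\lambda^{(1)}_{\pm n})\to-\tfrac1\kappa$ while $\operatorname{Re}\,(\lambda^{(2)}_{\pm n})\to-\tfrac2\kappa<-\tfrac1\kappa$, so the two branches lie in disjoint vertical strips except possibly for finitely many low-index eigenvalues; by moving those finitely many exceptional eigenvalues into $\sigma^{(1)}\left(\mathcal{Q}\right)$ (which is permitted since Lemma~\ref{T-6-4} only requires $\sigma^{(2)}$ to be the branch with the uniform gap, and a finite modification preserves all its hypotheses), we obtain a real constant $\nu$ with $\sup\{\operatorname{Re}\left(\lambda\right)\mid\lambda\in\sigma^{(1)}\}\le\nu\le\inf\{\operatorname{Re}\left(\lambda\right)\mid\lambda\in\sigma^{(2)}\}$.

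Granting these, Lemma~\ref{T-6-4}\ref{T-6-4-d} yields $\mathbb{S}\left(t\right)$-invariant closed subspaces $\mathscr{X}_1,\mathscr{X}_2$ with $\mathscr{X}=\overline{\mathscr{X}_1\oplus\mathscr{X}_2}$, $\sigma\left(\mathcal{T}|_{\mathscr{X}_i}\right)=\sigma^{(i)}\left(\mathcal{T}\right)$, and a Riesz basis of subspaces $\{E\left(\lambda_k,\mathcal{T}\right)\mathscr{X}_2\}_{k}$ for $\mathscr{X}_2$. Since each $\sigma^{(2)}$-eigenvalue is semisimple of multiplicity $2$, each $E\left(\lambda_k,\mathcal{T}\right)\mathscr{X}_2$ is the $2$-dimensional geometric eigenspace spanned by the pair $x_1\left(\lambda_k\right),x_2\left(\lambda_k\right)$ from \eqref{306dds}--\eqref{306ddsss}; choosing these (suitably normalised) inside each finite-dimensional subspace produces a Riesz basis for $\mathscr{X}_2$ consisting of eigenvectors. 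It then remains to handle $\mathscr{X}_1$: here $\sigma^{(1)}\left(\mathcal{T}\right)$ is itself a separated sequence (same asymptotic argument, using $\operatorname{Im}\,(\lambda^{(1)}_{\pm n})\sim\pm(\tau^{(1)}_n)^2$ with $\tau^{(1)}_n=\left(n+\tfrac12\right)\pi$) of simple eigenvalues, and the eigenvectors of $\mathcal{T}|_{\mathscr{X}_1}$ are minimal (Lemma~\ref{thm04abcd} applied to the compact operator $(\mathcal{T}|_{\mathscr{X}_1})^{-1}$) and complete in $\mathscr{X}_1$ by Theorem~\ref{T-4-2}; a minimal complete sequence of eigenvectors of an operator whose spectrum is a single separated branch with bounded real parts is a Riesz basis of $\mathscr{X}_1$ (this is a standard consequence, e.g.\ via the same machinery of Lemma~\ref{T-6-4} applied with $\sigma^{(1)}$ in the role of $\sigma^{(2)}$, or directly). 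Concatenating the two Riesz bases gives a Riesz basis of eigenvectors for $\mathscr{X}=\mathscr{X}_1\oplus\mathscr{X}_2$, completing the proof.

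\medskip

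The step I expect to be the main obstacle is controlling the \textbf{low-index eigenvalues and the geometry of the two strips}: the asymptotic formulas only guarantee separation and strip-disjointness for large $n$, so one must argue that the finitely many remaining eigenvalues of both branches are genuinely distinct and can be absorbed into $\sigma^{(1)}$ without destroying the hypotheses of Lemma~\ref{T-6-4} --- in particular one must make sure the $\nu$-separating line can still be drawn, which uses that the asymptotic real parts $-1/\kappa$ and $-2/\kappa$ are strictly different (hence the hypothesis $\kappa>0$ is essential here). A secondary technical point is confirming $\sup_k\|E\left(\lambda_k,\mathcal{T}\right)\|<\infty$ so that Lemma~\ref{T-6-4}\ref{T-6-4-e} applies and $\mathscr{D}\left(\mathcal{T}\right)\subset\mathscr{X}_1\oplus\mathscr{X}_2$; this follows from the uniform multiplicity bound together with the uniform gap, which makes the Riesz projections uniformly bounded.
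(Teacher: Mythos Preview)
Your separating-line argument in condition~\ref{T-6-4-c} is oriented the wrong way. You take $\sigma^{(1)}\left(\mathcal{Q}\right)=\sigma^{(1)}\left(\mathcal{T}\right)$ (asymptote $\operatorname{Re}\left(\lambda\right)\sim-1/\kappa$) and $\sigma^{(2)}\left(\mathcal{Q}\right)=\sigma^{(2)}\left(\mathcal{T}\right)$ (asymptote $\operatorname{Re}\left(\lambda\right)\sim-2/\kappa$), but the lemma requires $\sup\operatorname{Re}\sigma^{(1)}\le\nu\le\inf\operatorname{Re}\sigma^{(2)}$, i.e.\ $\sigma^{(1)}$ must lie to the \emph{left} of $\sigma^{(2)}$. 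Since $-1/\kappa>-2/\kappa$, the inequality fails not just for finitely many low-index eigenvalues but asymptotically, so no finite reshuffling can repair it. You could swap the labels, but then your separate treatment of $\mathscr{X}_1$ (now carrying the multiplicity-$2$ branch) and the concatenation step would need to be redone, and the argument that a minimal complete system over a separated branch is automatically a Riesz basis is exactly what Lemma~\ref{T-6-4} is for, so invoking it as ``standard'' is circular.

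The paper sidesteps all of this with a much simpler choice: take $\sigma^{(1)}\left(\mathcal{Q}\right)=\{-\infty\}$ (effectively empty) and $\sigma^{(2)}\left(\mathcal{Q}\right)=\sigma\left(\mathcal{T}\right)$, the \emph{entire} spectrum. The separating constant $\nu$ is then trivial, and the gap condition~\eqref{eq12vsep} holds for the union of both branches because the imaginary parts $(\tau_n^{(1)})^2\sim(n+\tfrac12)^2\pi^2$ and $(\tau_n^{(2)})^2\sim(n+\tfrac14)^2\pi^2$ interlace with gaps growing like $n$. Condition~\ref{T-6-4-b} holds with $\sup m_a\le 2$ by Theorem~\ref{T-3-4}. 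Now Theorem~\ref{T-4-2} (completeness) forces $\mathscr{X}_2=\overline{\operatorname{span}\{E\left(\lambda,\mathcal{T}\right)\mathscr{X}\}}=\mathscr{X}$, so $\mathscr{X}_1$ is trivial and there is nothing to concatenate: the Riesz basis of subspaces for $\mathscr{X}_2=\mathscr{X}$ is already the desired conclusion. This avoids any need to compare the two vertical strips, to reshuffle low-index eigenvalues, or to verify $\sup_k\|E\left(\lambda_k,\mathcal{T}\right)\|<\infty$ separately.
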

\begin{proof}
We identify the operator $\mathcal{Q}$ in Lemma \ref{T-6-4} with $\mathcal{T}$ and begin by noting that, by Theorem \ref{T-3-4}, for any $\lambda\in\sigma\left(\mathcal{T}\right)$, $E\left(\lambda,T\right)\mathscr{X}$ is equal to the geometric eigenspace $\operatorname{Ker} \left(\lambda{I}-\mathcal{T}\right)$ and so $\dim E\left(\lambda, T\right)\mathscr{X}=1$ or $\dim E\left(\lambda, T\right)\mathscr{X}=2$. Let us take $\sigma^{\left(1\right)}\left(\mathcal{T}\right)=\left\{-\infty\right\}$ and $\sigma^{\left(2\right)}\left(\mathcal{T}\right)=\sigma\left(\mathcal{T}\right)$. It now follows from Theorem \ref{T-3-3} that all conditions of Lemma \ref{T-6-4} are satisfied. In particular, by virtue of Theorem \ref{T-3-3}, all but at most finitely many of the eigenvalues are simple for large enough $n$, i.e., the eigenvalue sequences are interpolating (because, asymptotically, the eigenvalues are uniformly bounded away from the imaginary axis and the uniform gap condition \eqref{eq12vsep} holds). It also follows from Theorem \ref{T-4-2} that
\begin{equation*}
\overline{\operatorname{span}\left\{E\left(\lambda,\mathcal{T}\right)\mathscr{X}~\middle|~\lambda\in\sigma^{\left(2\right)}\left(\mathcal{T}\right)\right\}}=\mathscr{X}_2=\mathscr{X}.
\end{equation*}
Hence, combining these results, we have by Lemma \ref{T-6-4} that there exists a sequence of eigenvectors of $\mathcal{T}$ which is a Riesz basis for $\mathscr{X}$.
\end{proof}

As a corollary of the theorem we obtain the following result.
\begin{corollary}\label{T-4-4}
$\mathcal{T}$ satisfies the Spectrum Determined Growth Assumption.
\end{corollary}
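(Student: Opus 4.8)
The plan is to obtain the Spectrum Determined Growth Assumption -- the identity $\varpi_0=\sup\{\operatorname{Re}(\lambda)\mid\lambda\in\sigma(\mathcal{T})\}$ -- as an essentially immediate consequence of the Riesz basis property proved in Theorem \ref{T-4-3}, together with the semigroup generation of Theorem \ref{T-2-1} and the spectral localisation of Theorem \ref{T-3-3}. Write $s(\mathcal{T})\coloneqq\sup\{\operatorname{Re}(\lambda)\mid\lambda\in\sigma(\mathcal{T})\}$; by Theorem \ref{T-3-3} the spectrum is discrete with uniformly bounded real parts, so $s(\mathcal{T})$ is a finite real number. One half of the identity, $s(\mathcal{T})\le\varpi_0$, holds for every $C_0$-semigroup generator (if $\operatorname{Re}(\lambda)>\varpi_0$ then $\lambda\in\varrho(\mathcal{T})$ via the Laplace-transform representation of the resolvent), so only the reverse inequality $\varpi_0\le s(\mathcal{T})$ needs to be established.

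For the reverse inequality I would fix a properly enumerated sequence of eigenvectors $\{x_n\}$ which, by Theorem \ref{T-4-3}, forms a Riesz basis of $\mathscr{X}$, with $\mathcal{T}x_n=\lambda_n x_n$ and hence $\operatorname{Re}(\lambda_n)\le s(\mathcal{T})$ for every $n$, and let $m,M>0$ be the corresponding Riesz constants. Given $x=\sum_n c_n x_n\in\mathscr{X}$, boundedness of $\mathbb{S}(t)$ on $\mathscr{X}$ together with $\mathbb{S}(t)x_n=e^{\lambda_n t}x_n$ permits passage to the limit over the partial sums of $x$, giving $\mathbb{S}(t)x=\sum_n c_n e^{\lambda_n t}x_n$ in $\mathscr{X}$. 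Applying the Riesz bounds to this expansion and using $|e^{\lambda_n t}|=e^{\operatorname{Re}(\lambda_n)t}\le e^{s(\mathcal{T})t}$ for $t\in\mathbf{R}_+$ yields
\begin{equation*}
\|\mathbb{S}(t)x\|_{\mathscr{X}}^2\le M\sum_n|c_n|^2e^{2\operatorname{Re}(\lambda_n)t}\le Me^{2s(\mathcal{T})t}\sum_n|c_n|^2\le\frac{M}{m}e^{2s(\mathcal{T})t}\|x\|_{\mathscr{X}}^2.
\end{equation*}
Hence $\|\mathbb{S}(t)\|_{\mathscr{X}}\le\sqrt{M/m}\,e^{s(\mathcal{T})t}$ for all $t\in\mathbf{R}_+$, so $\varpi_0\le s(\mathcal{T})$, and combining with the automatic inequality completes the argument.

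I do not anticipate a genuine obstacle, since all the substantial work -- semigroup generation (Theorem \ref{T-2-1}), absence of associated vectors so that the root vectors are precisely the eigenvectors (Theorem \ref{T-3-4}), finiteness of the spectral bound (Theorem \ref{T-3-3}), and the Riesz basis property itself (Theorem \ref{T-4-3}) -- is already in place. The only point deserving a line of care is the interchange of $\mathbb{S}(t)$ with the infinite series, which is justified by boundedness of $\mathbb{S}(t)$ and norm-convergence of the partial sums of $x$ (together with $\{c_ne^{\lambda_n t}\}$ being square-summable, whence the series is convergent); alternatively, one may simply observe that an operator whose root vectors form a Riesz basis of the underlying Hilbert space is a discrete spectral operator in the sense of \cite[Corollary XVIII.2.33]{DunfordSchwartz1971}, for which the Spectrum Determined Growth Assumption is classical (cf.\ \cite{Miloslavskii1985,Roh1982}).
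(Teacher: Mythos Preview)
Your proposal is correct and follows the standard route: the Riesz basis of eigenvectors from Theorem \ref{T-4-3} (together with semisimplicity from Theorem \ref{T-3-4} and the finite spectral bound from Theorem \ref{T-3-3}) yields the norm estimate $\|\mathbb{S}(t)\|_{\mathscr{X}}\le\sqrt{M/m}\,e^{s(\mathcal{T})t}$ directly. The paper itself gives no proof of the corollary, treating it as an immediate consequence of Theorem \ref{T-4-3} and implicitly invoking the classical results of \cite{Miloslavskii1985,Roh1982} already flagged in the Introduction; your explicit computation and your alternative (citing the discrete spectral operator characterisation) are exactly the two justifications the paper has in mind.
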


\section{Vertex feedback stabilisability result}\label{sec_05}

We now have all the ingredients to apply the spectral approach to obtain the final result of the paper.
\begin{theorem}\label{T-5-1}
If $\kappa>0$, then there exist constants $M\ge 1$ and $\varepsilon>0$ such that
\begin{equation*}
\left\|\mathbb{S}\left(t\right)x_0\right\|_\mathscr{X}\le Me^{-\varepsilon t}\left\|x_0\right\|_\mathscr{X},\quad x_0\in\mathscr{D}\left(\mathcal{T}\right),\quad t\in \mathbf{R}_+,
\end{equation*}
where $\mathbb{S}\left(t\right)$ is the $C_0$-semigroup of contractions generated by $\mathcal{T}\coloneqq \mathcal{A}+\mathcal{B}$ defined by \eqref{204}--\eqref{205}. Consequently we have for the solutions of the $\mathfrak{CL}$-system, as $t\rightarrow\infty$, $\left\|{x}\left(t\right)\right\|_\mathscr{X}\rightarrow 0$ exponentially and the $\mathfrak{CL}$-system is thus vertex feedback stabilisable.
\end{theorem}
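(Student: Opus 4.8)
The plan is to read the conclusion off the spectral picture assembled in Sections \ref{sec_03}--\ref{sec_04}. By Corollary \ref{T-4-4} the operator $\mathcal{T}$ satisfies the Spectrum Determined Growth Assumption, so the growth rate of $\mathbb{S}\left(t\right)$ is
\begin{equation*}
\varpi_0=\sup\left\{\operatorname{Re}\left(\lambda\right)~\middle|~\lambda\in\sigma\left(\mathcal{T}\right)\right\}.
\end{equation*}
The whole task therefore reduces to verifying that $\varpi_0<0$ when $\kappa>0$; the bound in the statement then follows with $\varepsilon\coloneqq-\varpi_0/2>0$ and some $M\ge 1$ (the constant being $\ge 1$ automatically, since $\mathbb{S}\left(t\right)$ is already contractive by Theorem \ref{T-2-1}).

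To bound $\varpi_0$ I would split the spectrum according to Theorem \ref{T-3-3}, $\sigma\left(\mathcal{T}\right)=\sigma^{\left(1\right)}\left(\mathcal{T}\right)\cup\sigma^{\left(2\right)}\left(\mathcal{T}\right)$. The asymptotic representations there give $\operatorname{Re}\,(\lambda^{\left(1\right)}_{\pm n})\to -1/\kappa$ and $\operatorname{Re}\,(\lambda^{\left(2\right)}_{\pm n})\to -2/\kappa$ as $n\to\infty$, so there is an index $n_0$ and a constant such that $\operatorname{Re}\left(\lambda\right)\le -1/(2\kappa)$ for every eigenvalue indexed by $|n|\ge n_0$ in either branch. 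Only finitely many eigenvalues remain, and each of these satisfies $\operatorname{Re}\left(\lambda\right)<0$ by Theorem \ref{T-3-1}; hence their (finite) maximum is some $-\delta<0$. Combining, $\varpi_0\le\max\left\{-\delta,-1/(2\kappa)\right\}<0$, as required. (As an alternative to invoking Corollary \ref{T-4-4} as a black box, one may argue directly: expand $x_0$ in the Riesz basis of eigenvectors from Theorem \ref{T-4-3}; since the eigenvalues are semisimple of uniformly bounded geometric multiplicity by Theorem \ref{T-3-4}, $\mathbb{S}\left(t\right)$ acts as a uniformly bounded family of finite-dimensional blocks, the $\lambda$-block decaying like $e^{\operatorname{Re}\left(\lambda\right)t}$, so $\left\|\mathbb{S}\left(t\right)x_0\right\|_{\mathscr{X}}\le Me^{\varpi_0 t}\left\|x_0\right\|_{\mathscr{X}}$ follows from the above estimate on $\varpi_0$.)

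Putting these pieces together, for $x_0\in\mathscr{D}\left(\mathcal{T}\right)$ we obtain $\left\|{x}\left(t\right)\right\|_{\mathscr{X}}=\left\|\mathbb{S}\left(t\right)x_0\right\|_{\mathscr{X}}\le Me^{-\varepsilon t}\left\|x_0\right\|_{\mathscr{X}}$, which is \eqref{exstab012b} with $\varpi_0$ there replaced by $-\varepsilon<0$; hence $\left\|{x}\left(t\right)\right\|_{\mathscr{X}}\to 0$ uniformly exponentially and the $\mathfrak{CL}$-system is vertex feedback stabilisable.

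There is no serious obstacle remaining at this stage: all the substantial work --- the Riesz basis property of the eigenvectors (hence the Spectrum Determined Growth Assumption) and the eigenvalue asymptotics placing the two branches strictly inside the open left half-plane --- has been carried out in Sections \ref{sec_03}--\ref{sec_04}. The only point demanding a little care is the passage from ``every eigenvalue has negative real part'' to ``the supremum of the real parts is negative''; this is precisely where Theorem \ref{T-3-3} is indispensable, since without the asymptotic control along the two branches one could not preclude a sequence of eigenvalues whose real parts tend to $0$, and this is also why I take $\varepsilon$ strictly below $-\varpi_0$ rather than equal to it.
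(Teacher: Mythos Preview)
Your proposal is correct and follows essentially the same route as the paper: invoke Corollary \ref{T-4-4} to reduce to $\varpi_0=\sup\{\operatorname{Re}(\lambda):\lambda\in\sigma(\mathcal{T})\}$, use the asymptotics of Theorem \ref{T-3-3} to control the tails of the two branches, and use Theorem \ref{T-3-1} for the finitely many remaining eigenvalues to conclude $\varpi_0<0$. Your write-up is in fact more explicit than the paper's about the finite/infinite split and the choice of $\varepsilon$, but the logical structure is the same.
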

\begin{proof}
Since, by Corollary \ref{T-4-4}, $\mathcal{T}$ satisfies the Spectrum Determined Growth Assumption, we have $\varpi_0=\sup\left\{\operatorname{Re}\left(\lambda\right)~\middle|~\lambda\in\sigma\left(\mathcal{T}\right)\right\}$. It follows from Theorem \ref{T-3-1} that $i\mathbb{R}\subset\varrho\left(\mathcal{T}\right)$, and according to Theorem \ref{T-3-3} the two branches $\sigma^{\left(1\right)}\left(\mathcal{T}\right)$, $\sigma^{\left(2\right)}\left(\mathcal{T}\right)$ have asymptotes $\operatorname{Re}\left(\lambda\right)\sim-\frac{1}{\kappa}$, $\operatorname{Re}\left(\lambda\right)\sim-\frac{2}{\kappa}$, respectively. Thus $\sup\left\{\operatorname{Re}\left(\lambda\right)~\middle|~\lambda\in\sigma\left(\mathcal{T}\right)\right\}<0$ and hence $\sup\left\{\operatorname{Re}\left(\lambda\right)~\middle|~\lambda\in\sigma\left(\mathcal{T}\right)\right\}\leq -\varepsilon<0$ for some $\varepsilon>0$. With this the proof is complete.
\end{proof}

\bigskip\noindent
\textbf{Acknowledgments.} The research described here was supported in part by the Stiftung KESSLER+CO für Bildung und Kultur und Grant EXPLOR-24MM

\bibliographystyle{plain}
\bibliography{BibLio02}

\end{document}